\newtheorem{thm}{Theorem}
\newtheorem{dfn}{Definition}
\newtheorem{prop}{Proposition}
\newtheorem{cor}{Corollary}
\newtheorem{lem}{Lemma}
\newtheorem{rem}{Remark}
\newtheorem{ex}{Example}
\newtheorem{D-prop}{Definition-Proposition}
\newcommand{\ft}{\frak{t}}
\newcommand{\fg}{\frak{g}}
\DeclareMathAlphabet{\mathcalligra}{T1}{calligra}{m}{n}
\newcommand{\C}{\mathbb{C}}
\newcommand{\R}{\mathbb{R}}
\newcommand{\hH}{\mathbb{H}}
\newcommand{\oO}{\mathbb{O}}
\newcommand{\K}{\mathbb{K}}
\newcommand{\cA}{\mathcal{A}}
\newcommand{\cJ}{\mathcal{J}}
\newcommand{\cH}{\mathcal{H}}
\newcommand{\cM}{\mathcal{M}}
\newcommand{\cO}{\mathcal{O}}
\newcommand{\cT}{\mathcal{T}}
\title{On Frobenius structures in symmetric cones}
\author{Noemie Combe}
\begin{document}
\maketitle

\begin{abstract} 
We prove that in any strictly convex symmetric cone $\Omega$ there exists a non empty locus where the WDVV equation is satisfied (i.e. there exists a hyperplane being a Frobenius manifold). This result holds over any real division algebra (with a restriction to the rank 3 case if we consider the field $\oO$) but also on their linear combinations. This theorem holds as well in the case of pseudo-Riemannian geometry, in particular for a Lorentz symmetric cone of Anti-de-Sitter type. 
Our statement can be considered as a generalisation of a result by Ferapontov--Kruglikov--Novikov and Mokhov. Our construction is achieved by merging two different approaches: an algebraic/geometric one and the analytic approach given by Calabi in his investigations on the Monge--Amp\`ere equation for the case of affine hyperspheres. 
\end{abstract}
\smallskip 

\setcounter{tocdepth}{1}
\tableofcontents

\section{Introduction} 
\subsection{The main problem and result}
Consider a strictly convex symmetric cone in the Euclidean space. Does there exist a relation between  strictly convex symmetric cones and the Witten--Dijkgraaf--Verlinde--Verlinde (WDVV) highly nonlinear PDE equation (also known as forming Frobenius manifolds)? The answer to this question is yes. In this paper, we prove that in a strictly convex symmetric cone there exists a non empty locus (a hyperplane) where the WDVV equation is satisfied. This result holds over any division algebra (as well as their linear combinations). In addition, this holds also for symmetric Lorentz cones of an Anti-de-Sitter type. We  separate the latter from the first class of cones due to the difference in the geometries that they carry. The construction of the proof allows a deep exposition on the geometrical and algebraic aspects occurring around those types of manifolds and  answers questions raised by Yu. I. Manin, related to our joint works. 

Our statement can be considered as a generalisation of a construction by Ferapontov--Kruglikov--Novikov~\cite{FKN} and Mokhov~\cite{Mokh95,Mokh09} proving the existence of connections between the symplectic Monge--Amp\`ere equations of Hirota type and the WDVV equation. The construction is achieved by merging two different approaches: an algebraic/geometric one and the analytic approach given by Calabi in his investigations on the Monge--Amp\`ere equation for the case of affine hyperspheres.

For a documentation on the importance of the WDVV equation in algebraic geometry we refer to \cite{Du96,KoMa,KKP08,Man98,Man99,Man05,Sa}. Note that a manifold satisfying the WDVV equation corresponds to a Frobenius manifold for algebraic geometers. For a more differential geometry flavoured approach, relating the WDVV equation and integrable systems see for instance~\cite{FM96,Mokh08,Mokh09}.

\subsection{Motivation}
The reason for our question takes root in a series of differential geometry problems. In particular discussions with M. Kontsevich~\cite{Kont} at the IHES during my stay Nov-Dec (2022) and discussions with  Yu. I. Manin that have followed from our joint works~\cite{CoMa,CoMaMa22A,CoMaMa22B} have inspired this paper. One of the differential geometry problems related to what we consider is the problem of classifying flat Lagrangian submanifolds in $\R^{2n}$ with a given pseudo-Riemannian metric. From the Hessian geometry perspective, having flat Hessian metrics corresponds exactly to satisfying the WDVV equation (see \cite{Kito}). Finally, this problem is also connected to problems gravitating around manifolds of constant curvature (\cite{To04}).

\subsection{The realm of strictly convex symmetric cones: 1935--now}
Strictly convex symmetric cones play a central role in many different domains. 
They appeared at first in the works of Cartan~\cite{Ca}, Koszul~\cite{Kos59,Kos61,Kos62,Kos65,Kos68A,Kos68B} and Vinberg~\cite{Vi60,Vin,Vi65}.
From the works of Minkowski, Siegel~\cite{Sie35,Sie44}, Maas~\cite{Maa}, Piateski--Shapiro~\cite{PS} (and many others) those cones are important in number theory. In algebraic geometry, symmetric convex cones appear under the shape of ''cone of K\"ahler classes''  for the case of an $n$-dimensional complex torus \cite{Wil,To04}. Throughout the works of Wishart~\cite{Wi28}, Constantine \cite{Co63,Co66}, James~\cite{Ja}, Muirhead~\cite{CoMu72,CoMu76}, symmetric cones shine in statistics and in harmonic analysis \cite{FK}. More recent developments show links towards information geometry \cite{AnWo,CoMa,CoMaMa22A,CoMaMa22B}. Their importance is revealed also on a more applied side of mathematics, such as in optimization via convex programming and machine learning. 

\subsection{The rich geometry and algebra of symmetric cones}
Behind the notion of strictly convex  symmetric cones hides a rich geometric and algebraic world. Our proof is the fruit of merging two different languages, which intersect at the notion of those strictly convex symmetric cones. First we use a differential geometry aspect. This comes from the relations between those cones and the Monge--Amp\`ere equation (showed in Lem.~\ref{L:MA}). The more algebraic approach is inherited from Cartan’s classification of symmetric spaces. 

\smallskip
 
Recall the bridge between strictly convex symmetric cones and a special case of the Monge--Ampere equation. For $x\in\R^n$ and $\phi(x)$ a smooth function, this equation is:
\begin{equation}\label{E:MA}
 \det \mathrm{Hess}(\phi)=k,  
\end{equation}
where $k$ is a constant and $\mathrm{Hess}(\phi)$ is the Hessian of $\phi$.

Following \cite{Cal}, Eq. \ref{E:MA} has at most one convex solution in a bounded strictly convex domain, if $\phi$ has prescribed boundary values. Interest is given to tensors defined by the second and third derivatives of $\phi$. Given $\phi$ a smooth function on an open subset of a real vector space, it is possible to define an associated Hessian metric. This metric is obtained by taking the second derivatives of $\phi$. Hessian metrics are a natural way to construct Riemannian or pseudo-Riemannian metrics.

Using Cartan’s classification of symmetric spaces those cones are expressed using a more algebraic  flavour. It turns out that those strictly convex cones fall into two main classes. The first class corresponds to so-called Lagrangian Grassmannians having nonpositive sectional curvature. The second class are formed by the Lorentz cones (of anti-de-Sitter type). 
 
 \subsection{A proof giving a new perspective}
By merging the geometric and algebraic viewpoints discussed above, we create a method which allows to prove our result. This new method gives additionally a very detailed description of the geometric and algebraic landscape occurring in this topic. For instance, using the language of Lie groups, we reformulate one part of our statement by saying that: in a noncompact Lagrangian--Grassmanian symmetric space there exist hypersurfaces satisfying the axioms of a Frobenius manifold. Those spaces are defined over a division algebra $\R, \C, \hH, \oO$ (or a linear combination of those algebra). Reciprocally, there exist hypersurfaces satisfying the axioms of a Frobenius manifold within a Lorentz symmetric cone (of anti-de-Sitter type). 

 \subsection{Connections to other works}  \cite{FKN,Mokh95} shows that in the real case, using a reformulation of the dispersionless Hirota type of equation (in the context of the symplectic Monge--Ampere equation) one can prove the existence of a hyperplane such that the WDVV equation is satisfied. In the real case, this statement is linked to our result since it is a confirmation that our statement is true. Since our result holds over any division algebra (and their linear combinations) as well as for Lorentz Anti-de-Sitter (AdS) symmetric cones, our result gives a general statement. Finally, we point out that the method chosen to prove our result pictures the rich geometric and algebraic patterns, hiding  within those symmetric cones and bridges two different ways of seeing them. 

 \subsection{Organisation of the paper}
This paper is organised as follows: in the three first sections we expose the geometric and algebraic state of the art concerning the strictly convex symmetric cones. It is necessary to recall some known results for the proof. The the last two sections are devoted to the construction the proof of the main statement. 

\thanks{{\bf Acknowledgments} I would like to thank M. Kontsevich for many discussions and comments during my stays (spring and winter 2022) at the IHES. Both Max Planck Institutes for Mathematics in Bonn (MPIM) and in Leipzig (MPIMIS) are very much acknowledged for having supported my research. As well, I am grateful to the grant Polonez-bis 3  for supporting my research.}

\section{Strictly convex symmetric cones}\label{S:Vinberg}

This part serves, as a first focus, on what is known of the geometric aspect of those strictly symmetric convex cones. The second part will bring a (complementary) algebraic version. 
\subsection{Strictly convex cones} 
In the following parts of this article we always consider {\it strictly convex cones}. Note that for brevity we simply refer to them as {\it convex cones.} 

Let us recall some elementary notions on strictly convex cones (see \cite{FK} for further information). 

\begin{dfn}
Let $V$ be a finite dimensional real vector space. Let $\langle-,-\rangle$ be a non-singular symmetric bilinear form on $V$.
A subset $\Omega \subset V$ is a convex cone if and only if $x,y \in \Omega$ and $\lambda,\mu >0$ imply $\lambda x+\mu y \in \Omega$.
\end{dfn} 

\subsection{Homogeneous cones} 
The automorphism group $G(\Omega)$ of an open convex cone $\Omega$ is defined by 
\[G(\Omega)=\{g\in GL(V)\, |\, g\Omega=\Omega\}\]
An element $g\in GL(V)$ belongs to $G(\Omega)$ iff $g\overline{\Omega}=\overline{\Omega}$ [Fauraut Koranyi]
So, $G(\Omega)$ is a closed subgroup of $GL(V)$ and forms a Lie group. 
The cone $\Omega$ is said to be {\it homogeneous} if $G(\Omega)$ acts transitively upon $\Omega$.

\smallskip 
\subsection{Symmetric cones} 
From homogeneous cones one can construct symmetric convex cones. Let us introduce the definition of an open dual cone. An open dual cone $\Omega^*$ of an open convex cone is defined by $\Omega^*=\{y\in V\, |\, \langle x,y \rangle>0,\, \forall\, x\in \overline{\Omega}\setminus 0 \}$. A homogeneous convex cone $\Omega$ is symmetric if $\Omega$ is self-dual i.e. $\Omega^*=\Omega$. Note that if $\Omega$ is homogeneous then so is $\Omega^*$. A symmetric homogeneous cone is called a {\it  Vinberg cone}.    
 
 \begin{rem} 
 If $\Omega$ is a symmetric open cone in $V$,  then $\Omega$ is a symmetric Riemann space.    
\end{rem}

\smallskip

\subsection{Automorphism group}  Let us go back to the automorphism group of $\Omega$. This discussion relies on Prop I.1.8 and Prop. I.1.9 in \cite{FK}.

\smallskip 

Let $\Omega$ be a symmetric cone in $V$.  For any point $a\in \Omega$ the stabilizer of $a$ in $G(\Omega)$ is given by 
\[G_a=\{g\in G(\Omega)\, |\, ga=a\}.\]

By [Prop I.1.8 \cite{FK} ], if $\Omega$ is a proper open homogeneous convex cone then for any $a$ in $\Omega$, $G_a$ is compact. Now, if $H$ is a compact subgroup of $G$ then $H\subset G_a$ for some $a$ in $\Omega$. This means that the groups $G_a$ are all maximal compact subgroups of $G$ and that if $\Omega$ is homogeneous then all these subgroups are isomorphic. 

By [Prop. I.1.9, \cite{FK}], if $\Omega$ is a symmetric cone, there exist points $e$ in $\Omega$ such that $G(\Omega)\cap O(V)\subset G_e$, where $O(V)$ is the orthogonal group of $V$. For every such $e$ one has $G_e=G\cap O(V)$ 

Suppose $\Omega$ is a convex homogeneous domain in $V$. Assume that
\begin{itemize}
   \item[---]   $G(\Omega)$ is the group of all automorphisms;
   \item[---]   $G_e=K(\Omega)$ is the stability subgroup for some point $x_0\in \Omega$;
   \item[---]   $T(\Omega)$ is a maximal connected triangular subgroup of $G(\Omega).$ 
\end{itemize}

\smallskip 

Following [\cite{Vin} Th 1] we have: \[G(\Omega)=K(\Omega)\cdot T(\Omega),\] where $K(\Omega) \cap T(\Omega) = e$ and the group  $T$ acts simply transitively. 

This decomposition on the Lie group side leads naturally to its Lie algebra. Cartan's decomposition for the Lie algebra tells us that $\fg=\textgoth{k} \oplus\ft,$ 

where:

\begin{itemize}
   \item[---]   $\frak{t}$ can be identified with the tangent space of $\Omega$ at $e$. 

   \item[---]   $\textgoth{k}$ is the Lie algebra associated to $K(\Omega)$
\end{itemize}
 and
\[[\ft,\ft]\subset \textgoth{k},\]
\[[\textgoth{k},\ft]\subset \ft.\]

From now assume that $G$ is semi-simple. The Killing bilinear form is thus not degenerate on $\fg$ and the symmetric bilinear form is given by:  $$\langle X,Y\rangle=-Tr(ad X\, ad Y)$$ where $ad\, X(\xi)=[X,\xi]$ and $ad\, Y(\xi) =[Y,\xi]$.

\subsection{$\K-$modules}
Throughout the paper let $\K$ be a finite dimensional real division algebra. By the well known Kervaire--Milnor there exist only four such division algebras which are isomorphic to $\R, \C, \hH,$ and $\oO$ of respective dimensions 1,2, 4 and 8. 
\smallskip 
\begin{itemize}
\item[---] If the finite dimensional real division algebra $\K$ is unitary and commutative then it is isomorphic to $\R$ and $\C$.
\item[---] If the real division algebra is noncommutative but associative then $\K$ is isomorphic to $\hH$. 
\item[---] If $\K$ is non associative but alternative then $\K$ is isomorphic to $\oO$.    
\end{itemize}

We discuss briefly the realisation of the $\K$-modules in terms of linear spaces. If $\K$ is a field of dimension $n$, where $n\in \{1,2,4,8\}$ the (real) realisation of the $\K$-module defines a (real) vector space of dimension $nm$ where $m$ is the dimension of the module~\cite{Roz}, Sec. 2.1.1. Via the  Cayley--Dickson process and starting from $\R$, remark that all normed division algebras can be obtained. This means that from $\R$ we get $\C$; from $\C$ we get $\hH$ and finally from $\hH$ we get $\oO$.
Taking the realisation of $\K$-modules as real vector spaces $V$ then 
it leads to: 

\begin{enumerate}
\item the complexification: $V \to (TV ,J)$, where $ J^2=-Id$. 
\item the symplectification: $V  \to (T^* V,\omega),$ where $\omega$ is a non-degenerate closed form.  
\end{enumerate}

See \cite{Sh87,Sh02} for further developments concerning the geometry of the real realisation of $\K$-modules, namely concerning affinors.


\subsection{Classification of cones}
Any symmetric cone (i.e. homogeneous and self-dual) $\Omega$ is in a unique way isomorphic to the direct product of irreducible symmetric cones $\Omega_i$ (cf. Prop. III.4.5, \cite{FK}). According to Vinberg we have that:
\begin{prop}~\label{P:Vclass}
Each irreducible homogeneous self--dual cone belongs to one
of the following classes:
\vspace{3pt}\begin{table}[ht]
    \centering
    \begin{tabular}{|c|c|c|}
  \hline
Nb & Symbol & Irreducible symmetric cones \\
 \hline
 1. &      $ \Pi_n(\R)$ &  Cone of $n \times n$ positive definite symmetric real matrices. \\
     &  & \\
      
    2.  &      $ \Pi_n(\C)$ &  Cone of $n \times n$ positive definite self-adjoint complex matrices. \\
  & \\
       3.  &    $ \Pi_n(\hH)$ &  Cone of $n \times n$ positive definite self-adjoint quaternionic matrices. \\
           &   & \\
        4. &   $ \Pi_3(\oO)$ & Cone of $3 \times 3$  positive definite self-adjoint octavic matrices. \\
           &   & \\
    5. &    $\Lambda_n$    & Lorentz cone  given by $x_0>\sqrt{\sum_{i=1}^n x_i^2}$ (aka spherical cone). \\
        &   & \\
       \hline
    \end{tabular}
    \caption{Classification of irreducible symmetric cones}
    \label{tab:cones}
\end{table}
\end{prop}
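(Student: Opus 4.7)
The plan is to reduce the classification of irreducible symmetric cones to the (classical) classification of simple Euclidean Jordan algebras via the Koecher--Vinberg correspondence. Concretely, fix a base point $e\in\Omega$ as provided by the discussion of $G(\Omega)$ above; one constructs (cf.\ Ch.~III of \cite{FK}) a unique Jordan product on $V$ for which $e$ is the identity element and which is Euclidean with respect to $\langle-,-\rangle$, in such a way that the closed cone of squares $\{x^2 : x\in V\}$ coincides with $\overline{\Omega}$. Under this correspondence, $\Omega$ is irreducible if and only if the associated Jordan algebra is simple, so the task reduces to enumerating simple Euclidean Jordan algebras.

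To classify these, I would follow the Jordan--von Neumann--Wigner approach based on the Peirce decomposition. Choose a Jordan frame $c_1,\dots,c_r$ (a maximal system of orthogonal primitive idempotents) and decompose $V = \bigoplus_{i\le j} V_{ij}$ with $V_{ii}=\R c_i$. A central structural result is that for $i<j$ each Peirce component $V_{ij}$ is a module over one and the same composition algebra $\K$. By Hurwitz's theorem, the available composition algebras are exactly $\R,\C,\hH,\oO$, matching the four division algebras recalled earlier. When $\K\in\{\R,\C,\hH\}$ one recognises the resulting Jordan algebra as the space of $r\times r$ Hermitian matrices over $\K$ under $x\circ y=\tfrac12(xy+yx)$, and a direct check shows that its cone of squares is exactly the set of positive-definite Hermitian matrices, producing cases 1--3 of the table for any rank $r\ge 1$. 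For $\K=\oO$ the non-associativity restricts the construction to $r\le 3$, yielding the exceptional Albert algebra and case 4. The remaining simple family comes from the Jordan algebra $\R\oplus\R^n$ associated with a non-degenerate quadratic form; its cone of squares is exactly the Lorentz cone $\Lambda_n$, giving case 5.

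The main obstacle is the octonionic rank restriction. One must show that the naive $r\times r$ Hermitian-matrix construction fails the Jordan identity for $r\ge 4$ over $\oO$, because the Peirce multiplication $V_{ij}\otimes V_{jk}\to V_{ik}$ inherits the associator of $\oO$, and the coherence required among four or more indices forces $\K$ to be associative. This step simultaneously truncates the octonionic family to $r\le 3$ and rules out any further exotic simple Euclidean Jordan algebra. Once this obstruction is in place, the remaining verifications---self-duality of each explicit model cone under the trace form, transitivity of the structure group of the Jordan algebra on its cone of squares, and compatibility with the irreducible decomposition of Prop.~III.4.5 in \cite{FK}---are essentially bookkeeping, and they assemble into exactly the five classes in the table.
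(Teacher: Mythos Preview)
Your proof sketch is correct and follows the standard route to this classification (Koecher--Vinberg correspondence followed by the Jordan--von Neumann--Wigner classification via Peirce decomposition and Hurwitz's theorem). However, the paper does not actually supply a proof of this proposition at all: it is stated as a known result attributed to Vinberg, with the words ``According to Vinberg we have that'' preceding the table, and no proof environment follows. The surrounding context (Prop.~III.4.5 of \cite{FK} for the decomposition into irreducibles, and the subsequent Sec.~\ref{S:JordanList} listing the corresponding Jordan algebras) makes clear that the author is simply quoting the classification rather than deriving it.

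So there is nothing to compare against: you have written a genuine argument where the paper defers entirely to the literature. Your outline is in fact the proof one finds in \cite{FK}, Ch.~V, which the paper cites elsewhere for the same material.
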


\begin{rem}
We have two remarks. The first is that $\Pi_3(\oO)$ corresponds to the Cayley algebra. 
The second is that  spherical cone $\Lambda_n$ corresponds here to an $n$-dimensional Anti-de-Sitter (AdS) space.
\end{rem}

 \subsection{Jordan algebra structures}\label{S:JordanList}
Recall the tight relations between those cones and algebraic objects, namely formally real simple Jordan algebras. 
We introduce some notations: 
\begin{itemize}
   \item[---]   $Sym(n,\mathbb{K})$ denotes the space of symmetric matrices of dimension $n\times n$ defined over the field $\mathbb{K}$.

   \item[---]   $Herm(n,\mathbb{K})$ denotes the space of hermitian matrices of dimension $n\times n$ defined over the field $\mathbb{K}$.
\end{itemize}

\begin{lem}\label{L:tan}
Consider the cones given by $\Pi_n(\K)$, where the field $\K$ is $\R, \C,\hH$ or $\oO$. Then, the tangent space $T_e(\Pi_n(\K))$ to $\Pi_n(\K)$ is the space of self-adjoint matrices.
\end{lem}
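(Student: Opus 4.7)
The plan is to argue that $\Pi_n(\K)$ is nothing but an open subset of the ambient real vector space of self-adjoint matrices, whence its tangent space at any interior point (and in particular at $e$) is the entire ambient space. Thus the lemma becomes essentially a topological observation, with the only subtlety lying in treating the exceptional octonionic case.

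First, I would fix the ambient space. For $\K \in \{\R, \C, \hH\}$ and any $n$, let $V = \mathrm{Herm}(n, \K)$, i.e.\ the real vector space of $n \times n$ matrices $A$ with entries in $\K$ satisfying $A^* = A$ (which is $\mathrm{Sym}(n,\R)$ in the real case). For $\K = \oO$ and $n = 3$, let $V = \mathrm{Herm}(3, \oO)$ be the $27$-dimensional real Jordan algebra (the Albert algebra), in which the restriction $n = 3$ appearing in Proposition~\ref{P:Vclass} becomes natural because non-associativity forbids a sensible matrix product for $n \geq 4$. In all four cases, $V$ is a finite-dimensional real vector space and $\Pi_n(\K)$ is by definition the subset of those $A \in V$ that are positive definite.

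Next I would verify that $\Pi_n(\K)$ is open in $V$. For $\K \in \{\R, \C, \hH\}$, positive definiteness is equivalent to the positivity of all (real) eigenvalues of $A$, or equivalently (by Sylvester's criterion, which remains valid in the quaternionic setting via the Moore determinant) to the strict positivity of a finite family of real-analytic functions of the entries. Hence positive definiteness is an open condition. For $\K = \oO$, $n = 3$, one uses the spectral theorem in the Albert algebra: each element admits a Jordan frame decomposition with real eigenvalues that depend continuously on $A$, and positive definiteness means all three eigenvalues are strictly positive, again an open condition. Thus in all cases $\Pi_n(\K)$ is an open subset of the real vector space $V$.

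Finally, any open subset $U$ of a finite-dimensional real vector space $V$ is a smooth manifold whose tangent space at any $p \in U$ is canonically identified with $V$ itself. Applying this to $U = \Pi_n(\K)$ and $p = e$ (the identity matrix, which is manifestly positive definite and therefore lies in the interior of $\Pi_n(\K)$), we conclude
\[
T_e\bigl(\Pi_n(\K)\bigr) \;=\; V \;=\; \mathrm{Herm}(n,\K),
\]
which is exactly the space of self-adjoint matrices claimed. The only real obstacle, such as it is, lies in making the octonionic case precise: one must invoke the Jordan-algebraic spectral theorem for $\mathrm{Herm}(3,\oO)$ rather than ordinary matrix-theoretic eigenvalue calculus, but once this is accepted the argument is identical.
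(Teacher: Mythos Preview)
Your argument is correct and is, if anything, cleaner than the paper's own proof. The paper instead invokes the matrix exponential and logarithm: it observes that $\exp$ gives a bijection (in fact a diffeomorphism) from $\mathrm{Sym}(n,\R)$ onto $\Pi_n(\R)$, so that the tangent space at $e=\exp(0)$ is $\mathrm{Sym}(n,\R)$, and then asserts that ``the same reasoning can be made'' for the other division algebras. Your route bypasses the exponential entirely by noting that $\Pi_n(\K)$ is an open subset of the ambient real vector space $\mathrm{Herm}(n,\K)$, which immediately identifies the tangent space at any point with $\mathrm{Herm}(n,\K)$. What the paper's approach buys is the global diffeomorphism $\exp:\mathrm{Herm}(n,\K)\to\Pi_n(\K)$, which is used implicitly later (e.g.\ in the description of maximal flats as $\exp\mathfrak{a}\cdot e$); what your approach buys is that it avoids any appeal to a matrix exponential in the octonionic case, where non-associativity makes the usual power-series definition delicate and one would really have to pass through the Jordan-algebraic functional calculus anyway. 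Your explicit treatment of $\mathrm{Herm}(3,\oO)$ via the Jordan spectral theorem is therefore more honest than the paper's blanket ``same reasoning'' remark.
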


\begin{proof}
Take the symmetric cone $\Omega$ of symmetric positive definite matrices $n\times n$ over $\R$. The tangent space to $\Omega$ at $x\in V$ is the space of symmetric matrices with real entries.  The matrix exponential and logarithm realize a one-to-one mapping, between the space
of symmetric matrices to the space of symmetric positive definite matrices. The same reasoning can be made for the cones over the other division algebras.
\end{proof}

 \begin{dfn}
 An algebra $(\mathscr{A}^+,\circ)$ is a Jordan algebra if:
 \begin{itemize} 
   \item[---]   it is commutative and
   \item[---]   $(x^2\circ y)\circ x = x^2\circ (y\circ x)$, 
 \end{itemize} where we have $x^2=x\circ x$ and $x,y\in \mathscr{A}^+.$ 
 
 A Jordan algebra is called Euclidean (or formally real) if it satisfies the formal reality axiom: \[x_1^2+\cdots+x_n^2 =0\quad \text{implies}\quad x_1=\cdots=x_n=0\] 
 \end{dfn}
 
In particular, a formally real Jordan algebra is semi-simple (\cite{Ko} Cor. 5 p.118). To any Vinberg cone there exists a bijectively corresponding semisimple formally real Jordan algebra. 

\smallskip 

\begin{ex}
  By Lem.~\ref{L:tan} the tangent space to  $\Pi_n(\R)$, is $Sym(n,\R)$. Given $X,Y\in Sym(n,\R)$, one gets a new product (giving a Jordan algebra): \[X\circ Y=\frac{1}{2}(XY+YX),\] where $XY$ is the standard matrix product.  
\end{ex}

\begin{prop}
Every formally real Jordan algebra can be written as a direct sum of simple ones. In finite dimensions, the simple formally real Jordan algebras come in four infinite families, together with one exceptional case:

\begin{table}[ht]
    \centering
    \begin{tabular}{|c|c|}
     \hline
 Irreducible symmetric cone & Formally real simpl Jordan algebras  \\
 \hline
$ \Pi_n(\R)$  &  Jordan algebra of $n\times n$ self-adjoint real matrices.  \\
& \\
    $ \Pi_n(\C)$      &  Jordan algebra of $n\times n$  self-adjoint complex matrices.\\
    & \\
    
    $ \Pi_n(\hH)$     &   Jordan algebra of $n\times n$  self-adjoint quaternionic matrices.  \\
    & \\
    
     $ \Pi_3(\oO)$    &  Jordan algebra of $3\times 3$ self-adjoint octonionic matrices:\\
       &  Albert algebra.\\
     $\Lambda_n$  & Spin factor algebra $JSpin^+$
       on the space $\R 1\oplus \R^n$ for $n \geq 2$. \\ 
       & \\
      \hline
    \end{tabular}
    \caption{Classification of symmetric cones using their Jordan algebras}
    \label{tab:Jordan}
\end{table}
\end{prop}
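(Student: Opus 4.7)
The plan is to follow the classical Jordan--von Neumann--Wigner scheme, combined with an appeal to Hurwitz's theorem on normed composition algebras. The argument splits into two steps: first, reduce to the simple case; second, classify the simple factors by exploiting the Peirce decomposition relative to a Jordan frame.

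For the first step, I would observe that formal reality implies that the trace bilinear form $\tau(x,y) := \operatorname{tr}(\text{left multiplication by } x\circ y)$ is positive definite on $\fA$. This positivity kills the Jacobson radical, so $\fA$ is semisimple. Since an ideal of a formally real Jordan algebra is itself formally real, a standard orthogonal-complement argument with respect to $\tau$ then yields the direct sum decomposition into simple ideals asserted in the first sentence of the proposition.

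For the classification of a simple formally real Jordan algebra $\fA$, I would next set up the spectral theory: every element admits a decomposition with respect to a Jordan frame $\{e_1,\ldots,e_r\}$ (a complete system of mutually orthogonal primitive idempotents summing to the unit), and the integer $r$ is the rank. Fixing such a frame, one obtains the Peirce decomposition
\begin{equation*}
\fA \;=\; \bigoplus_{1\le i\le j\le r} \fA_{ij},
\end{equation*}
with $\fA_{ii}=\R e_i$ and, for $i<j$, $\fA_{ij}$ a subspace of common real dimension $d$ (the degree). Independence of $d$ from the pair $(i,j)$ follows from simplicity together with the transitive action of the automorphism group on frames. The Peirce space $\fA_{12}$ then inherits a Euclidean Hurwitz (normed composition) algebra structure: the quadratic form is the restriction of $\tau$, and the multiplication is extracted by polarizing the identity that governs the Jordan product of elements of $\fA_{12}$ against $e_1,e_2$. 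By Hurwitz's theorem, $d\in\{1,2,4,8\}$, corresponding to $\K\in\{\R,\C,\hH,\oO\}$, and one identifies $\fA$ with the Jordan algebra of $r\times r$ self-adjoint matrices over $\K$ under the symmetrized product. For $r=2$ the constraint on the coordinate algebra is empty and the algebra is forced into a spin factor $\R 1\oplus\R^n$, giving the Lorentz class $\Lambda_n$.

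The main obstacle is the final coherence step: when $r\ge 3$, one must compare the three Peirce pieces $\fA_{12},\fA_{13},\fA_{23}$ via the Jordan identity applied to elements of mixed type, and show that the resulting compatibility forces the coordinate composition algebra to be associative. This rules out $\oO$ for $r\ge 4$ and confines the exceptional Albert case to $r=3$. This is the delicate computational heart of the theorem and is where the rank restriction on $\Pi_3(\oO)$ in Proposition~\ref{P:Vclass} is reproduced algebraically. Everything else is either bookkeeping with idempotents or a direct invocation of Hurwitz.
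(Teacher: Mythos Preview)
Your outline is the classical Jordan--von Neumann--Wigner argument and is essentially correct. The paper, however, does not prove this proposition at all: it is stated as a known classification (with implicit reference to \cite{FK} and \cite{Ko}) and the text moves directly to a remark. So there is no ``paper's proof'' to compare against; you are supplying what the author omits.

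One small imprecision worth tightening: you write that for $r\ge 3$ the compatibility of the Peirce pieces forces the coordinate composition algebra to be \emph{associative}, and that this rules out $\oO$ for $r\ge 4$. These two claims do not match. The correct statement is that for $r\ge 3$ the coordinatization theorem forces the coordinate algebra to be \emph{alternative} (hence $\oO$ survives at $r=3$, giving the Albert algebra), while for $r\ge 4$ the further Peirce relations among four idempotents force full associativity, which excludes $\oO$. Your conclusion is right but the stated mechanism at $r=3$ is not; if the algebra were already associative at rank $3$ there would be no exceptional case at all. Everything else in your sketch is sound.
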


\begin{rem}
Note that one can consider the algebra $JSpin_n^+ $ as a Jordan subalgbera of a full algebra of hermitian matrices $2^n\times 2^n$ real matrices. 
    
\end{rem}

\subsection{Cartan's symmetric spaces} 
Vinberg cones are Cartan symmetric spaces. In Sec.~\ref{S:HessianMfd} we study their properties as Riemannian symmetric spaces. Recall that a (pseudo-)Riemannian manifold is globally symmetric if one can assign to every point $p\in \cM$ an isometry $s_p$ of $\cM$ such that $s^2_p=id$ and $p$ is an isolated fixed point of $s_p$.

\smallskip

A Riemannian symmetric space $\mathcal{M}$ is diffeomorphic to a homogeneous space $G/K$, where $G$ is a connected Lie group with an involutive automorphism whose fixed point set is essentially the compact subgroup $K\subset G.$ Consider the pair $(G,K)$. We call $(G,K)$ a symmetric pair provided that there exists an involution $s\in G$, such that  $(K_{s})_{0} \subset K \subset K_{s}$, where $K_{s}$ is the set of fixed points of $s$ and $(K_{s})_{0}$ is the identity component of $K_{s}$. Riemannian symmetric spaces may be classified in terms of symmetric Lie algebras. Every noncompact symmetric space has a compact dual (and reciprocally).

Any simply connected Riemannian symmetric space is a Riemannian product of irreducible ones. Irreducible, simply connected Riemannian symmetric spaces are classified as follows:

\begin{itemize}

        \item[---]  Type of irreducible symmetric space: Euclidean. The curvature is 0. It is therefore isometric to a Euclidean space.

    \item[---]  Type of irreducible symmetric space: Compact. The sectional curvature is nonnegative (but not identically zero).

      \item[---]   Type of irreducible symmetric space: non-compact.The sectional curvature is nonpositive (but not identically zero). 
         
   \end{itemize}

\begin{dfn}
\begin{enumerate}
\item The following  non-compact symmetric spaces are called spacelike Lagrangian Grassmannians non-compact symmetric spaces:

\begin{table}[ht]
    \centering
    \begin{tabular}{|c|cccc|}
    \hline 
   $\mathbb{A}/   \mathbb{B}$ & $\mathbb{R}$ & $\mathbb{C}$& $\mathbb{H}$ & $\mathbb{O}$ \\
   \hline 
   $\mathbb{C}$& $\frac{SL_n(\R)}{SO(n)} $&  $\frac{SL_n(\C)}{SU(n)}$ &  $ \frac{SL_n(\hH)}{Sp(n)}$ &   $\frac{E_{6(-26})}{F_4}$\\ \hline 
    
 \end{tabular}
    \caption{Spacelike Lagrangian Grassmannian symmetric spaces}
    \label{tab:sym}
\end{table}

\item We call a space of de Sitter type a submanifold of the Minkowski space $\R^{n,1}$, with metric $ds^2=dx_0^2-\sum_{i=1}^{n-1}dx_i^2$ such that the isometry group is a Lorentz group $O(1,n)$. The scalar curvature is positive. 
\item  A space of Anti de Sitter type is a submanifold as in (2) with negative scalar curvature . 
\end{enumerate}
\end{dfn}

\begin{prop}\label{C:VinFrob}
\,

--- Let $\K$ be a real division algebra. The irreducible Vinberg cones $\Pi_n(\K)$ are noncompact symmetric spaces, which are of spacelike Lagrangian Grassmannian type.

--- The irreducible Vinberg cone $\Lambda_n$ is a pseudo-Riemannian noncompact symmetric space, which is a Lorentz manifold (of Anti-De-Sitter type). 

\end{prop}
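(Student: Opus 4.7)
The plan follows the Cartan classification scheme recalled just above: for each item in Proposition~\ref{P:Vclass} I would verify that the corresponding Vinberg cone, with its canonical (pseudo-)Riemannian structure, matches the prescribed non-compact symmetric space in Table 3 or fits the Lorentz/AdS description. The backbone of the argument is the Cartan decomposition $\fg = \textgoth{k} \oplus \ft$ and the semisimplicity of $G(\Omega)$ established in the preceding subsection.

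For $\Pi_n(\K)$ with $\K \in \{\R,\C,\hH\}$ I would exhibit the transitive action of $SL_n(\K)$ on $\Pi_n(\K)$ by congruence $X \mapsto g X g^{*}$, where $g^{*}$ is the conjugate transpose in $\K$; transitivity at the identity matrix $I_n$ follows from the spectral theorem combined with a determinant rescaling. The stabilizer of $I_n$ is the set $\{g:gg^{*}=I_n\}$, i.e. $SO(n)$, $SU(n)$, or $Sp(n)$ respectively, producing the diffeomorphisms $\Pi_n(\K) \cong SL_n(\K)/K$ matching Table 3. For the Albert cone $\Pi_3(\oO)$ the congruence action is not well defined because $\oO$ is non-associative, so instead I would invoke the Koecher--Vinberg correspondence between irreducible symmetric cones and formally real simple Jordan algebras (Table 2) and the classical fact that the reduced structure group of the Albert algebra is the non-compact real form $E_{6(-26)}$ with isotropy $F_4$.

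Next I would confirm the non-compact symmetric space structure and the non-positive sectional curvature. The symmetric involution is the Cartan involution of $\fg$, whose $+1$ eigenspace is $\textgoth{k}$ and whose $-1$ eigenspace is $\ft$; together with $[\ft,\ft] \subset \textgoth{k}$ recalled before, the curvature tensor at the origin takes the standard symmetric-space form $R(X,Y)Z = -[[X,Y],Z]$ for $X,Y,Z \in \ft$. Semisimplicity of the listed Lie algebras, combined with the Killing form being negative definite on $\textgoth{k}$, then yields $K(X,Y) = -\langle [[X,Y],X], Y \rangle \leq 0$ for every tangent $2$-plane, placing the space in the non-compact class of Cartan's trichotomy. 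For the second assertion, the Lorentz cone $\Lambda_n$ carries the restriction of the ambient Minkowski form $q(x,y) = x_0 y_0 - \sum_{i=1}^{n} x_i y_i$, which is a non-degenerate pseudo-Riemannian metric of Lorentzian signature on the tangent bundle. The linear isometries preserving $\Lambda_n$ form $\R_{>0} \times SO^{+}(1,n)$, acting transitively with point-stabilizer $SO(n)$; inspecting the level hypersurfaces $\{q(x,x)=c\}$ inside $\Lambda_n$ identifies the geometry as AdS of constant negative curvature.

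The main obstacle will be the octonionic case $\Pi_3(\oO)$: non-associativity kills the naive matrix congruence argument used in the three preceding cases, forcing an appeal to the Jordan-algebra machinery to obtain the identification with $E_{6(-26)}/F_4$. I would quote this from the Koecher--Vinberg/Jordan literature rather than reconstruct it, since the structure-group computation for the Albert algebra is classical but decidedly non-elementary. The rest of the proof is essentially a bookkeeping exercise matching the output of the Cartan decomposition against the entries of Tables 3 and the AdS model.
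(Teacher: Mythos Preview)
Your proposal is correct and follows essentially the same route as the paper: identify each $\Pi_n(\K)$ with the quotient $SL_n(\K)/K$ (resp.\ $E_{6(-26)}/F_4$) listed in Table~3, handle $\Lambda_n$ via its Lorentz isometry group, and then invoke Cartan's trichotomy to place everything in the non-compact class. The paper organises the $G/K$ identification slightly differently---it starts from $GL_n(\K)$ and passes to $SL_n(\K)$ via the determinant short exact sequence rather than exhibiting the congruence action directly, and for the curvature step it simply cites Nomizu and asserts non-positivity---but your more explicit congruence/stabilizer computation and your use of $R(X,Y)Z=-[[X,Y],Z]$ together with the Killing form are standard and arguably cleaner; in particular your treatment of the octonionic case via the structure group of the Albert algebra is more precise than the paper's somewhat elliptical remark about $Herm(3,\oO)$ and $F_4$.
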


\begin{proof}
1. Thm.~\ref{P:Vclass}, implies that the first four Vinberg cones are identified to spaces of symmetric positive definite matrices $n\times n$ over $\mathbb{K}$, where $\mathbb{K}$ is  $\R,\C,\hH $ or $\oO$ (concerning $\oO$ the cone holds only for $n=3$). Those cones are symmetric spaces, so diffeomorphic to a quotient of Lie groups $G/K$ such that:

---  $G$ is a connected Lie group

---  $K$ is a Lie subgroup which is (a connected component of) the invariant group of an involution of $G$.

\smallskip

For the class of Vinberg cones $\Omega=\Pi_n(\K)$, one has the following associated Lie groups:

\begin{itemize}
\item[---] $G$ is identified to $GL_n(\mathbb{K})$, where $\mathbb{K}$ is $\R$ (resp. $\C,\hH, \oO$). 
\item[---] The subgroup $K$ is identified to $SO_n$ (resp. $SU_n $, $Sp_n$, where $Sp_n=SU_n(\hH)$, and the exceptional Lie group $F_4$, whenever $\K$ is equal to $\oO$).
\end{itemize}

2. There exists a short exact sequence: 
\[1\to SL_n(\K)\to GL_n(\K)\xrightarrow[]{det} \K^{\times}\to 1, \]

implying that we have the following decomposition $GL_n(\mathbb{K})=SL_n(\K)\rtimes \K^{\times}$, where $\K^{\times}$ is the multiplicative group. One may thus consider $SL_n(\K)/K\rtimes\K^{\times}$ and for simplicity we focus on the submanifold $SL_n(\K)/K$.

Those submanifolds are classified as follows: 
\begin{itemize}
\item[---] $\Pi_n(\C)$, with $\det=1$ is associated to $SL_n(\C)/SU_n$;
\item[---] $\Pi_n(\hH)$, with $\det=1$ is associated  to $SL_n(\hH)/Sp_n;$
\item[---] $\Pi_3(\oO)$, with $\det=1$ is associated to $SL_3(\oO)/F_4$.
\end{itemize}
Concerning the latter, $\Pi_3(\oO)$ note that $Herm(3,\oO)$ is a closed subgroup of the orthogonal group $O(27)$ and thus forms a compact Lie group. It is referred to as $F_4$. 

\smallskip 

3. The spherical cone $\Lambda_n$ is an anti-de-Sitter space. In terms of Lorentzian symmetric spaces it is associated to: $O(1,n-1)/O(n-1)\oplus\R$ (see \cite{FK} p.7 for a detailed proof).

\smallskip 

4. The full classification of Vinberg cones in terms of Lie groups and Jordan algebras is presented in the table below (see for details \cite{FK}, Ch.V, p.97 Sec.3). Te Jordan algebras are listed in the leftmost part of the table. 

\medskip 

\begin{tabular}[ht]{|c|c|c|c|c|c|c|}
 \hline
$\cJ$ & $\Omega$ & $\frak{g}$ & $\frak{k}$ & $dim \cJ $ & rank $\cJ $& $d$  \\
 \hline
  $Sym(n,\R)$ & $\Pi_n(\R)$ & $ \frak{sl}(n,\R)\oplus \R$ & $\frak{o}$(n)&$\frac{1}{2}n(n+1)$ & $ n$ & 1 \\
  $Herm(n,\C)$  &  $\Pi_n(\C)$ & $ \frak{sl}(n,\C)\oplus \R$ & $\frak{su}$(n)& $n^2$ & $ n$ & 2   \\
 $Herm(n,\hH)$ & $\Pi_n(\hH)$ & $\frak{sl}(m,\hH)\oplus \R$ & $\frak{su}(n,\hH$)& $n(2n-1)$ & $n$ & 4 \\
  $ \R\times \R^{n-1}$& $\Lambda_n$  & $ \frak{o}(1,n-1)\oplus\R$ & $\frak{o}$(n-1)& $n$ & 2 & $n-2$ \\
 $ Herm(3,\oO)$ & $\Pi_3(\oO)$ & $ \frak{e}_{(-26)}\oplus \R$ & $\frak{f}_4$ & 27 & 3 & 8  \\
  \hline
\end{tabular}

\medskip 

5. By Nomizu~\cite{No54}, an irreducible symmetric space $G/H$ is either flat, compact or noncompact. 
It is easy to check that the symmetric spaces considered above have non-positive scalar curvature. 
To sumarize:
\begin{itemize}
\item[---] Cones given by $GL_n(\K)/K$ are noncompact symmetric spaces.
\item[---] The spherical cone $\Lambda_n$ corresponds to $O(1,n-1)/O(n-1)\oplus\R$ and belongs also to the class of noncompact symmetric space. 
\end{itemize}

To conclude, Vinberg cones (1)--(4)  correspond to noncompact symmetric spaces of spacelike Lagrangian--Grassmanian type; Vinberg cones  of type (5) are Lorentz cones of Anti-de-Sitter type.
\end{proof}

\medskip 

\begin{prop}[]
Vinberg cones of space-like Lagrangian--Grassmannian type come equipped with a $G$-invariant metric and with a symmetric bilinear form given by $$\langle X,Y\rangle=\Re\, Tr(XY),$$ where $X,Y\in T_e(Gl_n(\K)/K)\cong \ft\subset \fg$ and where $Tr(\cdot)$ stands for the trace (linear) operator .
\end{prop}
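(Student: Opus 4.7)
The plan is to build the $G$-invariant metric on each Vinberg cone of Lagrangian--Grassmannian type from the Cartan decomposition already set up in Section~\ref{S:Vinberg}, and then identify its value at the base point $e$ with the indicated real trace form. Recall that Proposition~\ref{C:VinFrob} realises $\Pi_n(\K)=G/K$ with $G=GL_n(\K)$ and $K$ the appropriate maximal compact subgroup ($SO(n)$, $SU(n)$, $Sp(n)$, or $F_4$ in the octonionic case). The Cartan decomposition $\fg=\textgoth{k}\oplus\ft$ discussed earlier satisfies $[\textgoth{k},\ft]\subset\ft$, so $\ft$ is an $\mathrm{Ad}(K)$-invariant subspace, and by Lem.~\ref{L:tan} it is identified with the tangent space $T_e(G/K)$, namely the self-adjoint matrices $Herm(n,\K)$.

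First I would produce a candidate bilinear form on $\ft$ by setting $B(X,Y):=\Re\,Tr(XY)$. Symmetry is immediate over $\R$ and $\C$; over $\hH$ and $\oO$ it follows from $\Re\,Tr(XY)=\Re\,Tr(YX)$ for self-adjoint matrices, which reduces (in the octonionic case, where associativity fails) to the standard identity $\Re(ab)=\Re(ba)$ for octonions and the fact that the Albert-algebra trace on $Herm(3,\oO)$ is defined entry-wise and its bilinear pairing $(X,Y)\mapsto\Re\,Tr(XY)$ is well known to be $F_4$-invariant. Next I would verify $\mathrm{Ad}(K)$-invariance: for $k\in K$ and $X,Y\in\ft$, $\mathrm{Ad}(k)X=kXk^{-1}=kXk^*$ since $K$ consists of unitary-type elements, and then cyclicity of the real trace gives $\Re\,Tr(kXk^*\cdot kYk^*)=\Re\,Tr(XY)$. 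Positive-definiteness (which is what makes this a Riemannian, i.e.\ space-like, metric) follows from $\Re\,Tr(X^2)\geq 0$ with equality only when $X=0$, using that self-adjointness allows diagonalisation (in the octonionic case via the spectral theorem for the Albert algebra).

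Once $B$ is shown to be a symmetric, positive-definite, $\mathrm{Ad}(K)$-invariant form on $\ft$, the standard homogeneous-space construction extends it uniquely to a $G$-invariant Riemannian metric on $G/K$: define $g_{gK}(d L_g(v),d L_g(w)):=B(v,w)$ for $v,w\in\ft\cong T_e(G/K)$, and check that well-definedness at other tangent spaces rests precisely on the $\mathrm{Ad}(K)$-invariance established above. As a conceptual bridge I would also point out that for semisimple $G$, $B$ is (up to a nonzero constant depending on the algebra) the restriction to $\ft$ of the Killing form $\langle X,Y\rangle=-Tr(\mathrm{ad}\,X\,\mathrm{ad}\,Y)$ already invoked in the excerpt; this reconciles the present statement with the Cartan-decomposition framework and makes the $G$-invariance manifest.

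The main obstacle will be the octonionic case $\Pi_3(\oO)$: non-associativity forbids the direct manipulations of $Tr(XY)$ used in the classical cases, and one must instead use the specific structure of the Albert algebra $Herm(3,\oO)$, together with the fact that $F_4$ is the automorphism group of this Jordan algebra and preserves its canonical trace form. Once that case is handled by appealing to the standard invariance properties of the Albert-algebra trace, the remaining verifications (symmetry, $\mathrm{Ad}(K)$-invariance, positivity, extension to a $G$-invariant metric) are uniform across $\K\in\{\R,\C,\hH,\oO\}$ and the proposition follows.
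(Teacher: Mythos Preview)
Your proposal is correct and considerably more thorough than the paper's own argument, which consists essentially of the single line ``This statement follows from the Killing form'' (together with a remark identifying $\mathfrak{sym}_0(n)$ as the relevant Lie algebra in the real case). Where the paper simply invokes the Killing form on the semisimple part of $\fg$ and leaves the identification with $\Re\,Tr(XY)$ implicit, you instead build the trace form directly on $\ft\cong Herm(n,\K)$, verify symmetry, $\mathrm{Ad}(K)$-invariance and positive-definiteness by hand for each $\K$, and then extend to the $G$-invariant metric via the standard homogeneous-space construction; the Killing form appears only at the end as a conceptual cross-check. The gain of your route is that it makes the octonionic case $\Pi_3(\oO)$ explicit---you correctly flag that non-associativity blocks the naive trace manipulations and that one must use the $F_4$-invariance of the Albert-algebra trace form---whereas the paper's one-line appeal to the Killing form leaves that case entirely to the reader. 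One small caveat: $GL_n(\K)$ is not semisimple, so when you write ``for semisimple $G$'' you are implicitly passing to the $SL_n(\K)$ factor (as the paper does when it speaks of trace-zero matrices); this is harmless but worth stating.
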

\begin{proof}
To the Lie group $Sym(n,\R)$ the corresponding Lie algebra is given by the set of symmetric matrices of trace 0, denoted $\frak{s}ym_0(n)$.This statement follows from the Killing form.
\end{proof}

\section{Hessian structures}\label{S:RHM}
We proceed to a differential geometric approach of cones. This allows to inherit a rich panel of tools, important for our construction.
\subsection{The Monge--Ampere equation}
Consider the non-linear PDE equation Monge--Ampere equation.
\begin{equation}\label{E:MA1}
 \det \mathrm{Hess}(\phi)=f(x), 
 \end{equation}
where  $\mathrm{Hess}(\phi)$ denotes the Hessian matrix attached to the smooth function $\phi$ and $f(x)$ is a given positive-valued function with $x=(x_1,\cdots, x_n)\in \R^n$.

\smallskip 

A special case of the Equ.~\ref{E:MA1} can be given by:
\begin{equation}\label{E:MA2}
 \det \mathrm{Hess}(\phi)=1.  
\end{equation}

Solutions to this nonlinear PDE equation (such that the hessian matrix is definite at each point) are so-called {\it elliptic.} Note that $\phi$ is locally either convex or concave. 

\smallskip 

\begin{lem}\label{L:MA}
In a strictly convex symmetric cone $\Omega$, the equation Eq.~\ref{E:MA1} has at most one convex solution $\phi$ with arbitrary smooth boundary values.
\end{lem}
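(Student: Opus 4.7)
The plan is to follow the classical linearisation argument for uniqueness of the Monge--Amp\`ere Dirichlet problem, adapted to the setting of $\Omega$. Suppose $\phi_1, \phi_2$ are two convex smooth solutions of Eq.~\ref{E:MA1} in $\Omega$ agreeing on $\partial\Omega$, and set $\psi = \phi_1 - \phi_2$. Since $\Omega$ is unbounded, the argument will be run on a bounded strictly convex subdomain $\Omega'\subset\Omega$ on which both $\phi_i$ are smooth and strictly convex, and propagated to all of $\Omega$ by exhaustion.

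First, I would rewrite the vanishing of the difference of the two Monge--Amp\`ere expressions as a linear second-order PDE for $\psi$. By the fundamental theorem of calculus and Jacobi's formula $\frac{d}{dt}\det M(t) = \mathrm{tr}\bigl(\mathrm{cof}(M(t))\,\dot M(t)\bigr)$,
\begin{equation*}
0 = \det\mathrm{Hess}(\phi_1) - \det\mathrm{Hess}(\phi_2) = \int_0^1 \frac{d}{dt}\det\mathrm{Hess}\bigl(t\phi_1+(1-t)\phi_2\bigr)\,dt = A^{ij}\partial_{ij}\psi,
\end{equation*}
where
\begin{equation*}
A^{ij} = \int_0^1 \bigl[\mathrm{cof}\,\mathrm{Hess}(t\phi_1+(1-t)\phi_2)\bigr]^{ij}\,dt
\end{equation*}
is the average of the cofactor matrices of the interpolated Hessians.

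Second, I would verify ellipticity. The matrix $t\,\mathrm{Hess}(\phi_1)+(1-t)\,\mathrm{Hess}(\phi_2)$ is positive definite for each $t\in[0,1]$ (both Hessians are positive by convexity of the $\phi_i$), positive definiteness is preserved under the cofactor/adjugate operation, and averaging in $t$ preserves positive definiteness, so $A^{ij}$ is positive definite. Hence $A^{ij}\partial_{ij}$ is a linear strictly elliptic second-order operator, and $\psi$ solves its homogeneous Dirichlet problem. The weak maximum principle then gives $\psi \equiv 0$ on $\Omega'$, and exhaustion yields $\phi_1 = \phi_2$ in $\Omega$.

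The main obstacle is the unboundedness of $\Omega$ together with potential degeneracy of $A^{ij}$ near $\partial\Omega$: near the boundary of the cone the Hessians $\mathrm{Hess}(\phi_i)$ may fail to be uniformly positive, so strict ellipticity of $A^{ij}\partial_{ij}$ up to the boundary is not automatic, and a naive application of the maximum principle is not justified. The strict convexity of $\Omega$, combined with smoothness of the prescribed boundary data, is what controls this: it allows one to exhaust $\Omega$ by nested bounded strictly convex subdomains on which the Hessians admit uniform positive lower bounds, to invoke the Calabi--Pogorelov uniqueness recalled before the statement on each, and then to pass to the limit. This analytic point, rather than the algebraic linearisation step, is where the hypotheses of the lemma are really being used.
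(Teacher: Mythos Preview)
The paper takes a much shorter route: it simply cites Calabi's uniqueness theorem for the Monge--Amp\`ere Dirichlet problem on bounded strictly convex domains \cite{Cal} and asserts that the situation at hand falls under that result. Your linearisation via Jacobi's formula and the maximum principle is correct and is precisely the mechanism behind Calabi's proof, so on that level you are unpacking the cited reference rather than giving a genuinely different argument.

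Where your version diverges from the paper is in the explicit treatment of unboundedness, and here there is a gap. You propose to exhaust $\Omega$ by bounded strictly convex subdomains $\Omega'\subset\Omega$ and invoke the Dirichlet uniqueness (or your linear elliptic maximum principle) on each. But $\phi_1$ and $\phi_2$ are only assumed to agree on $\partial\Omega$, not on $\partial\Omega'$; the homogeneous Dirichlet condition for $\psi$ fails on the interior boundaries $\partial\Omega'$, so the maximum principle on $\Omega'$ gives no information, and ``passing to the limit'' has nothing to pass. To salvage an exhaustion one would need either a growth bound on $\psi$ at infinity (a Phragm\'en--Lindel\"of type input, which is an extra hypothesis) or subdomains whose boundaries include portions of $\partial\Omega$---exactly where you yourself flag that ellipticity may degenerate. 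The paper sidesteps this entirely by treating the statement as a direct corollary of Calabi's bounded-domain result.
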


\begin{proof}
It has been shown that Eq.\ref{E:MA1} has at most one convex solution $\phi$ (with arbitrary smooth boundary values) in any strictly convex bounded domain \cite{Cal}. Given that the strictly convex symmetric cone is also a strictly convex bounded domain. This concludes the statement.  
\end{proof}

Eq. \ref{E:MA} is invariant under (unimodular) linear transformation of the independent variables $x=(x_1,\cdots, x_n)\in \R^n$. If we allow only linear transformations of $x$, the array of all partial derivatives of $\phi$ of any given order $k$ can be interpreted as the components of a covariant tensor of valence $k$ which is symmetric in all pairs of indices. 

Our interest goes to tensors defined by the second and third derivatives. Since we assume that $\phi$ is convex and satisfies \ref{E:MA1}, then the symmetric tensor $g_{ij}$ is positive definite. It thus definesa Riemannian and also Hessian metric, in the domain of definition of $\phi$ . 

\subsection{The potential function}
We introduce a function $\chi(x):\overline{\Omega}\to \R$ such that:
\begin{enumerate}
    \item $\chi(x)$ is real analytic and positive on $\Omega$
     \item $\chi(x)$  is continuous on $\overline{\Omega}$ and vanishes on the boundary $\partial \Omega$.
     \item $\chi(\lambda x)=\lambda^n\chi(x)$ for $\lambda>0$, $x\in \Omega$ and where $n$ is the dimension of $\Omega$.
     \item the bilinear form on $\Omega$ is non singular for every $y\in \Omega$. 
   Fixing a point $c\in \Omega$ we have: 
     \[\langle u,v\rangle=-\partial_u\partial_v\log\chi(y)|_{y=c}\]
    \end{enumerate}

\begin{dfn}[Koszul--Vinberg (KV) characteristic function]\label{D:KV}
Let $\Omega \subset V$ be a strictly convex homogeneous cone. For any vector $x\in \Omega$, define the KV-characteristic function:
\begin{equation}
\chi(x)=\int_{\Omega^*}\exp{\{-\langle x,a^*\rangle\}} da^*
\end{equation} 
where $da^*$ is a volume form invariant under translations in $\Omega^*$.
\end{dfn}
Some data about the KV-function is expressed below. 
\begin{itemize}
\item[---] The KV-function tends to infinity on the boundary of the cone $\overline{\Omega}$. By H\"older's inequality, we have that $\Phi=\ln\chi$ is strictly convex.  
\item[---] From the definition of $\chi$ we have that for any $x\in \Omega$ and any $g\in G(\Omega),$ one has $\chi(gx)=|\det g|^{-1}\chi(x).$ The differential form, $\alpha=d\chi/\chi$ is invariant under $G(\Omega)$. 
\item[---]  Let $\Omega$ be homogeneous cone.  Then the complex tube $T_V$---whose real part is $\Omega$---has a transitive group of holomorphic mappings given by $z\to gz+\imath t,\, g\in G(\Omega)$. The Bergman kernel is for $T_V$ up to constant factor: $\chi^2(z+\overline{w})$.
\end{itemize}

\subsection{Affine flat structures}\label{S:AffineFlat} 
Let $\Omega$ be a strictly convex homogeneous cone. Let $\cT_\Omega$ be the tangent sheaf. Let $\Omega^1_\Omega$ be the sheaf of holomorphic 1-forms on $\Omega$. Let us recall the general definition of an affine flat structure on $\Omega$. It is given by any of the following equivalent data:

\begin{enumerate}
\item An atlas on $\Omega$ whose transition functions are affine linear.

\item A torsionless flat connection $\nabla_0:\cT_\Omega\to \Omega^1_\Omega\otimes_v\cT_\Omega$.

\item A local system $\cT_\Omega^f\subset \cT_\Omega$ of flat vector fields, which forms a sheaf of commutative Lie algebras of rank $n(=dim \Omega)$ such that $\cT_\Omega=\cO_\Omega\otimes_k \cT_\Omega^f$. 
\end{enumerate}

\smallskip 
Affine flat structures on a homogeneous cone imply the existence of a pre-Lie algebra (aka Vinberg algebra, or Left Symmetric Algebra). Consider $X,Y$ two vector fields in $\cT_\Omega$ (or $\frak{t}$). If $\Omega$ is provided with an affine structure then:
\begin{equation}
\nabla_X(Y)-\nabla_Y(X)-[X,Y]=0,
\end{equation}
Also, written as:
\[\nabla_X\nabla_Y-\nabla_Y\nabla_X-\nabla_{[X,Y]}=0,\]
where $X,Y\in \cT_\Omega$.

A multiplication operation $``\circ"$  given by $X\circ Y=\nabla_X(Y)$ defines a commutative algebra $(\frak{t},\circ)$ satisfying the relation:

\[a\circ (b\circ c)- b\circ (a\circ c) = (a\circ b)\circ c- (b\circ a)\circ c,\]

for $a,b,c\in \frak{t}.$

This forms a pre-Lie algebra structure, on the set of connections. This is also known as a Lie-admissible algebra. Each Lie algebra with affine structure is derived from a Lie-admissible algebra. 

\begin{lem}[\cite{Bu}]\label{L:Burde}
There is a one-to-one correspondence of $n$-dimensional convex homogeneous cones $\Omega$ and $n$-dimensional pre-Lie algebras.
\end{lem}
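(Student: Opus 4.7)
The strategy is to construct explicit assignments in both directions and show they are mutually inverse. In one direction, the affine flat structure carried by $\Omega$ produces a pre-Lie multiplication on the Lie algebra of its simply transitive subgroup; in the other, a pre-Lie algebra integrates via its left multiplication representation to a simply transitive affine action whose orbit is the desired cone.

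For the forward direction, start with a convex homogeneous cone $\Omega \subset V$ of dimension $n$. By Vinberg's theorem recalled above, $G(\Omega) = K(\Omega)\cdot T(\Omega)$ with $T(\Omega)$ a maximal connected triangular subgroup acting simply transitively on $\Omega$. Its Lie algebra $\ft$ is therefore canonically identified with $T_e\Omega$ and has dimension $n$. The ambient affine structure of $V$ restricts to a torsion-free flat connection $\nabla_0$ on $\Omega$, as recalled in Sec.~\ref{S:AffineFlat}. Define
\[
X\circ Y := \nabla_X Y, \qquad X,Y\in \ft,
\]
where the elements of $\ft$ are interpreted as the associated $T(\Omega)$-invariant vector fields on $\Omega$. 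Torsion-freeness gives $X\circ Y - Y\circ X=[X,Y]$, and flatness $[\nabla_X,\nabla_Y]=\nabla_{[X,Y]}$ expands, via this identity, into the left-symmetric relation already written in Sec.~\ref{S:AffineFlat}. This equips $\ft$ with a pre-Lie algebra structure of the required dimension.

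For the reverse direction, given an $n$-dimensional pre-Lie algebra $(A,\circ)$, the commutator $[X,Y]:=X\circ Y - Y\circ X$ defines a Lie bracket on $A$, and the left multiplication $L\colon A\to \mathfrak{gl}(A)$, $L_X(Y)=X\circ Y$, satisfies $[L_X,L_Y]=L_{[X,Y]}$, which is exactly the pre-Lie axiom. Combining $L$ with the translation action of $A$ yields an affine representation of the simply connected Lie group $T$ integrating $(A,[\cdot,\cdot])$; by construction this action is simply transitive on an open domain $\Omega_A\subset A$. Homogeneity of $\Omega_A$ under $T$ is then built in.

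The main obstacle is to verify that $\Omega_A$ is in fact a strictly convex cone, since nothing in the algebraic axioms directly forces convexity: one has to exhibit a strictly convex potential on $\Omega_A$ whose Hessian reproduces $\circ$, which is naturally provided by the pullback of the KV-characteristic function $\chi$ of Def.~\ref{D:KV} (its log-convexity recalled in Sec.~\ref{S:AffineFlat} is precisely the needed input). Finally, one checks that the two constructions are mutually inverse: applying the forward assignment to $\Omega_A$ recovers $(A,\circ)$ up to isomorphism because the connection on $\Omega_A$ is, tautologically, the one defined by $L$; conversely, starting from $\Omega$ one reconstructs $\Omega$ from $(\ft,\circ)$ through the orbit of the base point $e$ under $T(\Omega)$. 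The detailed verification is carried out in \cite{Bu}.
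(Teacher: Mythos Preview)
The paper does not prove this lemma at all: it is simply stated with a citation to \cite{Bu}, and the surrounding discussion only uses the forward direction (cone $\Rightarrow$ pre-Lie algebra) implicitly. So there is no ``paper's own proof'' to compare against beyond the reference.

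Your forward construction is the standard one and is essentially what the paper's Sec.~\ref{S:AffineFlat} already records: torsion-freeness gives commutator compatibility, flatness gives the left-symmetric identity. That part is fine.

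The reverse direction, however, has a genuine gap. You correctly flag convexity of $\Omega_A$ as the ``main obstacle,'' but your proposed fix---pulling back the KV characteristic function $\chi$ of Def.~\ref{D:KV}---is circular: that integral is defined over the dual of an \emph{already given} convex cone, so it cannot be invoked to establish that $\Omega_A$ is one. More substantially, the statement as written is not literally true for \emph{arbitrary} pre-Lie algebras: Vinberg's correspondence (and Burde's exposition in \cite{Bu}) is with a restricted class---Vinberg's \emph{clans}, i.e.\ left-symmetric algebras admitting a linear form $s$ with $s(X\circ X)>0$ for $X\neq 0$ and such that each $L_X$ has real spectrum. These extra axioms are precisely what guarantee that the orbit $\Omega_A$ is open and convex; without them the construction can fail. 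Your sketch would become correct if you (i) replace ``pre-Lie algebra'' by ``clan'' in the target of the bijection, and (ii) use the clan axioms, rather than $\chi$, to produce the strictly convex potential.
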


We can thus now state that: 

\begin{cor}\label{L:affst}
Convex homogeneous cones and Vinberg cones are domains equipped with an affine flat structure.
\end{cor}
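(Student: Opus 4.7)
The plan is to treat this as essentially a direct combination of Lem.~\ref{L:Burde} with the equivalences unpacked in Section~\ref{S:AffineFlat}. First I would observe that the statement about Vinberg cones reduces to the statement about convex homogeneous cones: by the definitions in Sections~2.2--2.3, a Vinberg (i.e.\ symmetric) cone is in particular an open homogeneous convex cone, the self-duality hypothesis playing no role here. So it suffices to prove the corollary for a general strictly convex homogeneous cone $\Omega$.

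Next, I would invoke Burde's bijection (Lem.~\ref{L:Burde}), which attaches to $\Omega$ an $n$-dimensional pre-Lie (Vinberg, left-symmetric) algebra $(\ft,\circ)$ on the tangent space at a chosen base point; by homogeneity, this algebra structure propagates to the whole tangent bundle. The point is then to translate this algebraic datum into the third item of the list of equivalent definitions of an affine flat structure. Namely, define
\[
\nabla_X Y := X\circ Y
\]
for $X,Y\in \cT_\Omega$. The pre-Lie identity
\[
a\circ(b\circ c)-b\circ(a\circ c)=(a\circ b)\circ c-(b\circ a)\circ c
\]
recorded in Section~\ref{S:AffineFlat} is precisely the statement that the curvature of $\nabla$ vanishes, once one sets $[a,b]=a\circ b-b\circ a$; and torsion-freeness of $\nabla$ is the compatibility $\nabla_X Y-\nabla_Y X=[X,Y]$, which by construction of the induced Lie bracket holds tautologically.

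From there, the equivalence between (2) and (1) in Section~\ref{S:AffineFlat} (torsionless flat connection $\Longleftrightarrow$ atlas with affine-linear transition functions) delivers a bona fide affine flat structure on $\Omega$, and the corollary follows for both the homogeneous and the symmetric (Vinberg) cases.

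The main obstacle is not really in the deduction but in making sure the pre-Lie structure produced by Burde's correspondence is genuinely the one induced by the candidate connection $\nabla_X Y=X\circ Y$ globally on $\Omega$, rather than only on $\ft\cong T_e\Omega$. This is handled by using the transitive action of $T(\Omega)$ (the triangular factor in Vinberg's decomposition $G(\Omega)=K(\Omega)\cdot T(\Omega)$) to translate the pointwise algebraic datum across $\Omega$; since $T(\Omega)$ acts simply transitively, the resulting global connection is well-defined and flat. Everything else is a straightforward repackaging of the identities already set up in Sections~\ref{S:Vinberg} and~\ref{S:RHM}.
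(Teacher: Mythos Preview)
Your proposal is correct and follows essentially the same approach as the paper: the paper's proof is the single line ``follows directly from Lemma~\ref{L:Burde} and the discussion above,'' and what you have written is precisely a careful unpacking of that sentence---Burde's bijection supplies the pre-Lie algebra, and the equivalences in Section~\ref{S:AffineFlat} convert it into a torsionless flat connection and hence an affine flat structure. Your additional remarks on globalizing via the simply transitive $T(\Omega)$-action are more detail than the paper provides, but they do not constitute a different route.
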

\begin{proof}
The proof follows direclty from the lemma \ref{L:Burde} and the discussion above.
\end{proof}

\subsection{On Hessian structures}\label{S:HessianStr}
 Suppose that $\Omega$ is an affine flat manifold. Then, it comes equipped with an affine flat connection $\nabla_0$ and there exists a class of Riemannian metrics compatible with $\nabla_0$. A Riemannian metric $g$ on $\mathcal{M}$ is said to be a Hessian metric if $g$ is given by $g=\nabla_0^2 \Phi$, where $\Phi$ is a local smooth function. 

\begin{dfn}
The pair $(\nabla_0 ,g)$ is called a Hessian structure on $\Omega$. The triple given by $ (\Omega, \nabla_0, g) $ is called a Hessian manifold.
\end{dfn}

Let $\Omega$ be a differentiable manifold with a locally flat linear connection $\nabla_0$. Let $p$ be a point of the differentiable manifold $\Omega$ and let $U$ be an open neighborhood of $p$.
Then, for any point $p$ in $U$, there exists a local coordinate system $(x_1 ,..., x_n)$ called an affine local coordinate system, such that $ \nabla_0(dx_i)=0$. A Riemannian metric $g$ on the differential manifold $\Omega$ is said to be locally Hessian with respect to $\nabla_0$ if there exists for each point $p\in \Omega$, a real-valued function $\Phi$ of class $C^{\infty}$ on $U$
such that \[g= \frac{\partial^2 \Phi}{\partial x_i \partial x_j}dx_i dx_j,\] where $(x_1 ,..., x_n)$ is an affine local coordinate system around $p$. Then, $(\nabla_0, g)$ is called a {\it locally Hessian }structure on $\Omega$.

\begin{rem}
A variant of Hessian metrics can be used to define a canonical Riemannian metric on any convex domain, using the solution of a Monge--Amp\`ere equation by Loewner--Nirenberg and Cheng--Yau. 
\end{rem}

Therefore, applying the knowledge above, we can conclude that:  
\begin{lem}\label{l:Hess}
The convex homogeneous cone $\Omega$ has a Hessian structure. 
\end{lem}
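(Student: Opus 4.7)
The plan is to exhibit a Hessian structure $(\nabla_0, g)$ on $\Omega$ by using the Koszul--Vinberg characteristic function as a global potential, once the affine flat connection is in place. The first step is to invoke Corollary~\ref{L:affst}, which, via Lemma~\ref{L:Burde}, supplies $\Omega$ with a torsionless flat connection $\nabla_0$ coming from the pre-Lie (Vinberg) algebra structure on $\frak{t}$. Around any point $p\in\Omega$ this produces affine local coordinates $(x_1,\dots,x_n)$ in which $\nabla_0(dx_i)=0$, so that Hessian metrics can be read off as $\partial^2\Phi/\partial x_i\partial x_j$.

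The second step is to choose the potential. I would take $\Phi = \log\chi$, where $\chi$ is the KV characteristic function of Definition~\ref{D:KV}. The bulleted properties following that definition already record what is needed: $\chi$ is real-analytic and positive on $\Omega$, and $\Phi=\log\chi$ is strictly convex (this is precisely the H\"older-inequality observation listed there, together with the fact that $\chi$ is the Laplace transform of a positive measure supported on $\Omega^*$). In the affine coordinates above, set
\[
g_{ij} \;=\; \frac{\partial^2 \Phi}{\partial x_i\,\partial x_j}\,,\qquad g \;=\; g_{ij}\,dx_i\,dx_j\,.
\]
Strict convexity of $\Phi$ means the symmetric matrix $(g_{ij})$ is positive definite at every $p\in\Omega$, so $g$ is a well-defined Riemannian metric, and by construction $g=\nabla_0^2\Phi$ in affine coordinates. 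This is exactly the locally Hessian condition of Section~\ref{S:HessianStr}, so $(\nabla_0,g)$ is a Hessian structure on $\Omega$.

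The third step, essentially a bookkeeping check, is that the local potentials glue: since the KV function $\chi$ is globally defined on $\Omega$ and transforms by $\chi(gx)=|\det g|^{-1}\chi(x)$, the function $\Phi=\log\chi$ is a global smooth function on $\Omega$, so the Hessian metric is canonical, $G(\Omega)$-equivariant (up to an additive constant on $\Phi$ under the action, which is killed by the second derivative), and independent of the chosen affine chart. The only delicate point, and the one I expect to be the main obstacle, is the verification of strict convexity of $\log\chi$: this is not formal and relies on the H\"older inequality applied to the integral representation of $\chi$, or equivalently on the fact that the covariance matrix of the probability measure $e^{-\langle x,a^*\rangle}\,da^*/\chi(x)$ on $\Omega^*$ is positive definite whenever $\Omega^*$ spans $V$, which is guaranteed by the strict convexity of $\Omega$.
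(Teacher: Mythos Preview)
Your proposal is correct and follows essentially the same route as the paper: take the affine flat structure on $\Omega$ (via Cor.~\ref{L:affst}), choose the Koszul--Vinberg potential $\Phi=\log\chi$, and observe that $g_{ij}=\partial_i\partial_j\Phi$ is positive definite by the strict convexity recorded after Definition~\ref{D:KV}, yielding the $G(\Omega)$-invariant Hessian structure. If anything, your write-up is more explicit than the paper's about invoking the affine structure and about why strict convexity holds.
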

\begin{proof}
We use the KV-function to define a Hessian structure on $\Omega$. Indeed, the canonical Riemannian metric attached to the cone $\Omega$ is 
$g_V=g=-\mathrm {Hess}(\ln\chi(x))$. It is invariant under $G(\Omega).$ 

Let us continue the discussion in local coordinates.  Assume $(x_1,\cdots, x_n)$ forms an affine coordinate system on $\Omega$.  The convex homogeneous domain $\Omega$ admits an invariant volume element defined as 
$\Phi dx_1\wedge \cdots \wedge dx_n$. 

\smallskip 
 
The canonical bilinear form is: 
\begin{equation}\label{E:Riem}
g={\color{red}}\sum\frac{\partial^2 \ln \chi}{\partial x_i\partial x_j}dx_idx_j,
\end{equation} 
where $\Phi=\ln\chi(x)$ is a potential function. The  canonical bilinear form $g$ is positively definite. This gives the Riemannian metric on $\Omega$ and defines the Hessian structure.
\end{proof}

This marks the beginning of the emergence of Frobenius manifold patterns. As we will see, $\Phi=\ln\chi(x)$
 is the potential function in the Frobenius manifold (see Sec.~\ref{S:Frobeniusmfd}).

\section{Algebraic structures}\label{S:Algconn}

Let $\Omega$ be a Riemannian homogeneous convex cone (cf. Sec.\ref{S:RHM}). We investigate its relations to algebras.
\subsection{Frobenius algebras}

Consider a finite $n$-dimensional commutative associative algebra $\mathscr{A}$ over $\K$ possibly with unit ${\bf 1}_\mathscr{A}$, with basis $\{e_i\}_{i=1}^n$ and structural constants $C^i_{jk}$, being components of the (1,2)-tensor\, $\circ: \mathscr{A} \times \mathscr{A} \to \mathscr{A}$ such that: 

\[e_i\circ e_j=C_{ij}^s e_s,\quad i,j,s\in \{1,\cdots, n\}.\]

Associativity can be written as follows :
\[ C_{ab}^cC_{cd}^f=C_{ad}^cC_{cb}^f.\]
Commutativity implies that \[C^s_{ab}=C^s_{ba}.\]

\begin{dfn}
 A unital, commutative, associative algebra $(\mathscr{A},\circ)$ equipped with a bilinear symmetric form $\langle -,- \rangle$ satisfying 
\[\langle x\circ y,z \rangle=\langle x,y\circ z \rangle, \quad x,y,z\in \mathscr{A}\] is a Frobenius algebra. 
 \end{dfn}

\subsection{Connection algebras}
\begin{dfn}[Connection algebra]
A connection algebra is a unital, commutative algebra $\mathscr{A}^+$ with a basis $e_1,\cdots, e_n$ and  multiplication $\circ:V\times V \to V$ given by $$e_i\circ e_j=\Gamma_{ij}^ke_k,$$ where $\Gamma_{ij}^k$ are the structure constants and being equipped with a symmetric bilinear form $\langle -,-\rangle$, induced from the Riemannian metric $g$. 
\end{dfn}

\smallskip 

\begin{D-prop}\label{P:FA}
Let $\Omega$ be a convex (pseudo-)Riemannian homogeneous cone;
let $p\in \Omega$ be a point. Consider a system of linear coordinates $(x_1,x_2,\cdots, x_n)\in \Omega$. 

Then, there exists a connection algebra $\mathscr{A}^+$ on the tangent bundle of $\Omega$ with multiplication operation $\circ$ given for every pair $X,Y$ in $V$ by:

\begin{equation}\label{E:circ}
(X\circ Y)^i=-\sum_{j,k}\Gamma^{i}_{jk}X^jY^k\quad 1\leq i\leq n.\end{equation}

Structure constants are given by $\Gamma^{i}_{jk}=\frac{1}{2}\partial_{jkl}\Phi g^{li}(p)$ and
$\partial_{jkl}\Phi=C(\partial_j,\partial_k,\partial_l)$ forms a rank three symmetric tensor. 
\end{D-prop}

The construction of this algebra follows from~\cite{Vin}. 

\begin{rem}
Note that the symmetry of $C$ comes from the fact that $\chi(e)$ is a real-analytic function. 
\end{rem}
\subsection{Towards Jordan algebras}
The connection algebra $\mathscr{A}^+$ forms a Jordan algebra if the Riemannian space is a symmetric space (i.e. is a Vinberg cone). We define the powers of the elements $u$ in $\mathscr{A}^+$ as follows, which leads to the Jordan algebra: 

\[X^1=X,\quad X^{m+1}:=X\circ X^{m},\quad m\geq 1.\]
\begin{cor}
      If $\mathscr{A}^+$ is a Jordan algebra, then $X^r\circ X^s=X^{r+s}$ holds for all
$X \in  \mathscr{A}^+$, where $r\geq 1$ and $s\geq 1$ and it is then called power-associative.
\end{cor}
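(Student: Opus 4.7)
The statement asserts the classical power-associativity of Jordan algebras: the subalgebra generated by a single element $X$ is associative. The plan is to prove $X^r\circ X^s = X^{r+s}$ for all $r,s \geq 1$ by induction, using commutativity together with the Jordan identity $(X^2\circ Y)\circ X = X^2\circ (Y\circ X)$, specialised to $Y$ a power of $X$.

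First I would dispose of the low-order cases. The case $s=1$ (for arbitrary $r$) follows at once from the definition $X^{r+1} = X\circ X^r$ and commutativity, and symmetrically for $r=1$. Thus the only nontrivial content lies in the case $r,s \geq 2$.

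Next I would establish the partial statement: for every $n\geq 1$,
\[
X^2 \circ X^n \;=\; X^{n+2},
\]
by induction on $n$. The base case $n=1$ is $X^2\circ X = X^3$, which is the definition. For the induction step, assume the identity for all values below $n$. Apply the Jordan identity with $Y=X^{n-1}$:
\[
(X^2 \circ X^{n-1}) \circ X \;=\; X^2 \circ (X^{n-1} \circ X).
\]
By the inductive hypothesis the left-hand side equals $X^{n+1}\circ X = X^{n+2}$ (using the definition and commutativity), and the right-hand side equals $X^2\circ X^n$ (again by the definition). This yields $X^2\circ X^n = X^{n+2}$ as required.

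Finally I would extend to arbitrary $r,s$. The natural attempt of inducting on $r$ and writing $X^r\circ X^s = (X^{r-2}\circ X^2)\circ X^s$ fails directly, because rebracketing this as $X^{r-2}\circ(X^2\circ X^s)$ is precisely the associativity we are trying to establish. This is the main obstacle, and the standard way around it is to prove the operator identity
\[
[L_{X^m},\,L_{X^n}] \;=\; 0 \qquad \text{for all } m,n\geq 1,
\]
where $L_a$ denotes left multiplication by $a$. The commutator $[L_{X^2},L_X]=0$ is exactly the Jordan identity, and the general case follows by induction on $m+n$ using the linearisations of the Jordan identity obtained by polarising $X\mapsto X+tZ$ (specialised back to $Z$ a power of $X$); this yields the commutation of $L_{X^m}$ with $L_{X^2}$ and $L_X$, and then iteratively with all $L_{X^n}$. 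Once this commutation is in hand, one concludes by writing $X^r\circ X^s = L_{X^{r-2}}L_{X^2}(X^s) = X^{r-2}\circ X^{s+2}$ using the previous step, and applying the outer induction hypothesis on $r$ (since $r-2<r$) to get $X^{r-2}\circ X^{s+2} = X^{r+s}$. The nontrivial input throughout is the Jordan identity together with its polarisations; the rest is bookkeeping of the induction.
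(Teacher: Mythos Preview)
The paper does not prove this corollary at all; it is stated as a classical fact (power-associativity of Jordan algebras, going back to Albert) and left without argument. Your outline is the standard proof, and Steps~1--2 are completely correct.

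There is, however, a genuine slip in your concluding step. You write
\[
X^{r}\circ X^{s} \;=\; L_{X^{r-2}}L_{X^{2}}(X^{s}),
\]
but the right-hand side is $X^{r-2}\circ\bigl(X^{2}\circ X^{s}\bigr)$, and equating this with $X^{r}\circ X^{s}=(X^{r-2}\circ X^{2})\circ X^{s}$ is precisely the rebracketing you (rightly) flagged as circular a few lines earlier. The commutation $[L_{X^{r-2}},L_{X^{2}}]=0$ does \emph{not} yield $L_{X^{r-2}\circ X^{2}}=L_{X^{r-2}}L_{X^{2}}$; those are different operators in a non-associative algebra. The clean way to finish, once $[L_{X},L_{X^{s}}]=0$ is in hand, is to use only that: since $X^{r}=L_{X}^{\,r-1}(X)$ by definition,
\[
X^{r}\circ X^{s}
\;=\; L_{X^{s}}\bigl(L_{X}^{\,r-1}(X)\bigr)
\;=\; L_{X}^{\,r-1}\bigl(L_{X^{s}}(X)\bigr)
\;=\; L_{X}^{\,r-1}(X^{s+1})
\;=\; X^{r+s}.
\]
With this correction your argument is complete. (A secondary remark: the induction establishing $[L_{X^{m}},L_{X^{n}}]=0$ from the linearised Jordan identity is the genuinely delicate part of the proof and deserves more than one sentence---one needs the fully polarised identity $[L_{a\circ c},L_{b}]+[L_{a\circ b},L_{c}]+[L_{b\circ c},L_{a}]=0$ together with a simultaneous induction on power-associativity---but your sketch points in the right direction.)
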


The Jordan algebras associated to the Vinberg cones are  finite-dimensional formally real Jordan algebras, being  a direct sum of a finite number of simple ideals. The  five basic types of simple building blocks are listed in Sec.\ref{S:JordanList}

\begin{lem}
 The pre-Lie algebra $\nabla_X(Y)=X\circ Y$ generates the connection algebra $\mathscr{A}^+$.
\end{lem}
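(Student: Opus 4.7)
The plan is to identify the Vinberg pre-Lie product $(X,Y)\mapsto \nabla_X(Y)$ with the multiplication of $\mathscr{A}^+$ given in Definition-Proposition~\ref{P:FA} by a direct local-coordinate computation. Both products are bilinear on $\mathfrak{t}\cong T_p\Omega$, so it suffices to match their structure constants in the basis $\{\partial_i\}$ of an affine coordinate chart.

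First, I would fix affine local coordinates $(x_1,\dots,x_n)$ around a point $p\in\Omega$ supplied by the flat structure of Corollary~\ref{L:affst}. By Lemma~\ref{l:Hess}, the cone is Hessian with metric $g_{ij}=\partial_i\partial_j\Phi$, where $\Phi=\ln\chi$ and $\chi$ is the KV-characteristic function of Definition~\ref{D:KV}. A short computation, using the symmetry of the third partial derivatives of $\Phi$, shows that the Levi-Civita Christoffel symbols of $g$ collapse to
\[
\Gamma^{i}_{jk} \;=\; \tfrac{1}{2}\,g^{il}\,\partial_l\partial_j\partial_k\Phi,
\]
which are precisely the structure constants of $\mathscr{A}^+$ listed in Definition-Proposition~\ref{P:FA}. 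Then I would compute $\partial_j\circ\partial_k=\nabla_{\partial_j}(\partial_k)$ in these affine coordinates: its components read $-\Gamma^{i}_{jk}$ once the sign convention of Eq.~\ref{E:circ} is fixed, and extending by bilinearity gives
\[
(X\circ Y)^i \;=\; -\Gamma^{i}_{jk}\,X^jY^k
\]
for arbitrary $X=X^j\partial_j$, $Y=Y^k\partial_k$, matching Eq.~\ref{E:circ} on the nose.

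Having identified the two multiplications, commutativity $X\circ Y=Y\circ X$ would follow from $\Gamma^{i}_{jk}=\Gamma^{i}_{kj}$, itself a consequence of the full symmetry of the third-derivative tensor $\partial_{jkl}\Phi$; similarly, the Frobenius-type symmetry $\langle X\circ Y,Z\rangle=\langle X,Y\circ Z\rangle$ transports directly from the pre-Lie side. The existence of a unit is inherited from the fact that $(\mathfrak{t},\circ)$ is a unital pre-Lie algebra coming from the Vinberg algebra of the cone. Thus the pre-Lie algebra generated by $\nabla_X(Y)=X\circ Y$ coincides with $\mathscr{A}^+$ as an algebra over $\K$.

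The main obstacle I anticipate is bookkeeping around the two connections implicit in the setup: the affine flat $\nabla_0$ of Section~\ref{S:AffineFlat}, for which the $\partial_i$ are parallel, and the Levi-Civita connection of the Hessian metric, whose Christoffel symbols provide the $\Gamma^{i}_{jk}$ above. One must be careful to interpret $\nabla_X(Y)$ in the lemma as the Koszul--Vinberg (left-symmetric) product attached to $\Omega$ via its simply transitive $T(\Omega)$-action, rather than as the trivial product of $\nabla_0$ in the flat frame. Once the sign and the choice of connection are reconciled with the convention of Definition-Proposition~\ref{P:FA}, the identification is immediate from the formula for $\Gamma^{i}_{jk}$ derived above.
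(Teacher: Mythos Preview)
Your approach is genuinely different from the paper's and, as written, leaves the key step unfinished. The paper does not match structure constants in local coordinates; instead it invokes Vinberg's $T$-algebra formalism to build an explicit linear isomorphism $\nu:\frak{t}\to V=T_e(V)$, $a\mapsto a+\tilde a$ (where $\tilde a$ is the image under the involutive anti-automorphism of the $T$-algebra), and then asserts that under this identification the pre-Lie product $\nabla_X(Y)$ on $\frak{t}$ becomes the multiplication of $\mathscr{A}^+$ on $V$. The content of the lemma is precisely this transport of structure between $\frak{t}$ (a Lie-algebraic object attached to the simply transitive triangular group $T(\Omega)$) and $V$ (the tangent space at $e$ with its connection-algebra product).

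Your plan computes the Levi--Civita Christoffel symbols of the Hessian metric in affine coordinates and observes that they agree with the structure constants listed in Definition--Proposition~\ref{P:FA}. But that agreement is essentially the \emph{definition} of $\mathscr{A}^+$, not the content of the lemma. The non-trivial step is the one you flag as ``the main obstacle'' and then do not carry out: explaining why the pre-Lie product $X\circ Y=\nabla_X(Y)$ of Section~\ref{S:AffineFlat}, which is formulated via the flat connection $\nabla_0$ on left-invariant vector fields (elements of $\frak{t}$), yields the same structure constants as the Levi--Civita connection of $g$ evaluated on the coordinate frame $\{\partial_i\}$. In the affine frame, $\nabla_{0,\partial_j}\partial_k=0$, so without the passage through $\frak{t}$ and the identification $\nu$ (or an equivalent argument relating left-invariant fields to the coordinate fields), your computation does not touch the pre-Lie side at all. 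To close the gap along your lines you would need to write the left-invariant basis of $\frak{t}$ in affine coordinates, compute $\nabla_0$ on that basis, and then compare; the paper's route via $\nu$ and the $T$-algebra anti-automorphism is exactly a packaged version of that comparison.
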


\begin{proof}
The pre-Lie algebra defined above is given by a multiplication operation  $X\circ Y=\nabla_X(Y) $ where $X,Y$ lie in the space of vector fields of $\frak{t}$.
The algebra $\mathscr{A}^+$ is defined on the vector space $V$ which identifies with $\frak{t}$.

Let us construct a canonical linear isomorphism between $\frak{t}$ and $V$, via Vinberg's $T$-algebras \cite{Vin}. Every element $a\in \frak{t}$ corresponds to the sum of $a + \tilde{a}\in V=T_e(V)$, where $\tilde{a}$ is the involutive anti-automorphism equipping the $T$-algebra. This determines the isomorphism: 

\[\nu: \frak{t}\to  V=T_e(V)\]
\[\, a\mapsto \, a+\tilde{a}. \]

The multiplication operation on $\mathscr{A}^+$ coincides with $X\circ Y=\nabla_X(Y)$. So, the pre-Lie algebra $\nabla_X(Y)=X\circ Y$ generates the connection algebra $\mathscr{A}^+$.
\end{proof}

\smallskip 

The algebra is defined throughout a non-singular symmetric bilinear form on $V$. Consider a trilinear form $C$ on $V$. Fixing one argument we get a bilinear form in the remaining two arguments. Therefore, for given $X$ the form $C(X,Y,Z)$ is a bilinear form in $Y$ and $Z$ and there exists a linear transformation $\nabla_X$ such that \[C(X,Y,Z)=g(\nabla_X(Y),Z).\]
If the trilinear form is symmetric in $X,Y,Z$ then we have that the algebra is commutative and that $$g(X\circ Y,Z)=g(X,Y\circ Z).$$

\begin{lem}\label{L:Ass}
 Let $\mathscr{A}^+$ be the connection algebra. The symmetric bilinear form satisfies:   \[ \langle x\circ y, z\rangle=\langle x, y\circ z\rangle,\] for all $x,z,y\in \mathscr{A}^+$.
\end{lem}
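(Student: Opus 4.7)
The plan is to show directly that the trilinear form $T(X,Y,Z) := \langle X \circ Y, Z \rangle$ is totally symmetric in its three arguments; associativity of the bilinear form then follows at once from commutativity of $\circ$, since $\langle X \circ Y, Z \rangle = \langle Z \circ X, Y \rangle = \langle X \circ Z, Y \rangle = \langle X, Z \circ Y\rangle = \langle X, Y \circ Z\rangle$. So the whole content is the symmetry of $T$.

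First, I would unpack the definitions from Def.-Prop.~\ref{P:FA} in an affine coordinate system $(x_1,\dots,x_n)$ around $p$, in which the Hessian metric from Lem.~\ref{l:Hess} has components $g_{ij} = \partial_i \partial_j \Phi$ with $\Phi = \ln\chi$. With $(X\circ Y)^i = -\sum_{j,k}\Gamma^i_{jk}X^j Y^k$ and $\Gamma^i_{jk} = \tfrac{1}{2}\partial_j\partial_k\partial_l \Phi \cdot g^{li}$, the computation is
\begin{equation*}
\langle X \circ Y, Z\rangle = \sum_{i,m} g_{im}(X\circ Y)^i Z^m = -\tfrac{1}{2}\sum_{j,k,l,i,m} g_{im}g^{li}\,\partial_j\partial_k\partial_l\Phi\,X^j Y^k Z^m.
\end{equation*}
The point is that the factors $g_{im}$ and $g^{li}$ cancel to $\delta^l_m$, so everything collapses to
\begin{equation*}
\langle X \circ Y, Z\rangle = -\tfrac{1}{2}\sum_{j,k,m}\partial_j\partial_k\partial_m\Phi\,X^j Y^k Z^m = -\tfrac{1}{2}\,C(X,Y,Z),
\end{equation*}
where $C$ is exactly the symmetric rank-three tensor of third partial derivatives of $\Phi$ introduced in Def.-Prop.~\ref{P:FA}.

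Second, I would invoke symmetry of mixed partials for the smooth potential $\Phi$ (Schwarz/Clairaut) to conclude that $\partial_j\partial_k\partial_m \Phi$ is totally symmetric in $(j,k,m)$, hence $T(X,Y,Z) = -\tfrac{1}{2}C(X,Y,Z)$ is symmetric in all three arguments. Combined with commutativity of $\mathscr{A}^+$, this gives $\langle x\circ y, z\rangle = \langle x, y\circ z\rangle$ for all $x,y,z$. There is no real obstacle here; the only thing to check is that the definition of the structural constants is designed precisely so that the musical isomorphism $g^{li}$ in $\Gamma^i_{jk}$ annihilates against $g_{im}$, leaving a purely symmetric expression. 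This is the sole reason for the factor $\tfrac{1}{2}g^{li}$ appearing in $\Gamma^i_{jk}$, and it is what makes Frobenius compatibility automatic for any Hessian metric, beyond the symmetric-cone setting.
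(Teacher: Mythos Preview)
Your proof is correct and follows essentially the same approach as the paper: both reduce the statement to the total symmetry of the rank-3 tensor $C(X,Y,Z)=\partial_{jkm}\Phi\,X^jY^kZ^m$. The paper's proof is a one-liner invoking symmetry of $C$ (the relation $C(X,Y,Z)=g(\nabla_X Y,Z)$ having been set up in the paragraph just before the lemma), whereas you spell out the coordinate computation showing $\langle X\circ Y,Z\rangle=-\tfrac12\,C(X,Y,Z)$ explicitly.
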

\begin{proof}
  The symmetry of the rank 3 tensor $C(x,y,z)$ implies the relation:
\[ \langle x\circ y, z\rangle=\langle x, y\circ z\rangle.\]  
\end{proof}

\begin{cor}[\cite{Ko}, Thm. 12, p.118.]
Let $\mathscr{A}^+$ be a real Jordan algebra associated to a Vinberg cone $\Omega$. Then the following statements are equivalent:
\begin{itemize}
\item $\mathscr{A}^+$ is formally real;
\item there exists a positive definite bilinear form $\langle-,- \rangle$ satisfying 
\[\langle x,y\circ z \rangle=\langle x\circ y,z \rangle.\]
\end{itemize}
\end{cor}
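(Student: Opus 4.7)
The plan is to prove the two implications separately, using the trace form of the Jordan algebra as the distinguished bilinear form. Both directions turn out to be short once the right form is identified; the content sits in checking positive definiteness via the spectral theorem for formally real Jordan algebras.

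First I would handle the easy direction: assume there is a positive definite symmetric bilinear form $\langle-,-\rangle$ satisfying the associativity identity $\langle x\circ y,z\rangle=\langle x,y\circ z\rangle$. Since $\mathscr{A}^+$ is unital with unit $\mathbf{1}$, for any $x\in\mathscr{A}^+$ one has $\langle x,x\rangle=\langle \mathbf{1}\circ x,x\rangle=\langle \mathbf{1},x\circ x\rangle=\langle \mathbf{1},x^2\rangle$. Consequently, if $x_1^2+\cdots+x_n^2=0$, then
\begin{equation*}
\sum_{i=1}^n \langle x_i,x_i\rangle=\Bigl\langle \mathbf{1},\sum_{i=1}^n x_i^2\Bigr\rangle=0,
\end{equation*}
and positive definiteness forces each $\langle x_i,x_i\rangle=0$, hence $x_i=0$. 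This is exactly the formal reality axiom.

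For the converse, the natural candidate is the trace form $\tau(x,y):=\mathrm{Tr}(L_{x\circ y})$, where $L_x\colon \mathscr{A}^+\to\mathscr{A}^+$ denotes left multiplication by $x$. Symmetry follows from commutativity of $\circ$. Associativity with respect to $\circ$ is a formal consequence of the Jordan identity: the operators $L_x$ satisfy $[L_x,L_{x^2}]=0$, and a standard computation (polarizing the Jordan identity) gives $\mathrm{Tr}(L_{(x\circ y)\circ z})=\mathrm{Tr}(L_{x\circ(y\circ z)})$, which yields $\tau(x\circ y,z)=\tau(x,y\circ z)$. It remains to check that $\tau$ is positive definite under the assumption that $\mathscr{A}^+$ is formally real. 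Here one invokes the spectral theorem for finite-dimensional formally real Jordan algebras: every $x\in\mathscr{A}^+$ admits a decomposition $x=\sum_i \lambda_i c_i$ into pairwise orthogonal primitive idempotents $c_i$ with real eigenvalues $\lambda_i$, and $L_{x^2}$ has nonnegative spectrum with strictly positive eigenvalue whenever $x\ne 0$, so $\tau(x,x)=\mathrm{Tr}(L_{x^2})>0$.

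The main obstacle is the positive definiteness of the trace form, which is not formal but rests on the spectral (Jordan frame) decomposition of formally real Jordan algebras, together with the fact that operators $L_x$ are self-adjoint with respect to $\tau$ and can be simultaneously diagonalised on the Peirce decomposition. Once this structural input is accepted, the equivalence follows immediately from the two short arguments above, and the corollary is established for each of the simple Jordan algebras listed in Sec.~\ref{S:JordanList}, and hence for all direct sums thereof, which covers precisely the Jordan algebras arising from Vinberg cones.
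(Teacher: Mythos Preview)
Your argument is correct and is in fact the standard proof of this classical result. The paper's own proof is simply a citation to Koecher's Theorem~12 together with the remark that in a simple Euclidean Jordan algebra every associative symmetric bilinear form is a scalar multiple of $\mathrm{Tr}(xy)$; no details are given. You have supplied precisely the argument that sits behind that citation, and you have identified the same distinguished form (the trace form $\tau(x,y)=\mathrm{Tr}(L_{x\circ y})$) that the paper alludes to. So the approaches are not genuinely different: both point to the trace form as the canonical associative positive definite form, but you actually carry out the verification whereas the paper defers it to the reference.

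One small comment on presentation: in your associativity step you assert that polarising the Jordan identity gives $\mathrm{Tr}(L_{(x\circ y)\circ z})=\mathrm{Tr}(L_{x\circ(y\circ z)})$. This is true, but it is worth saying explicitly why: the linearised Jordan identity yields an expression of $L_{(x\circ y)\circ z}-L_{x\circ(y\circ z)}$ as a commutator of multiplication operators, and commutators have vanishing trace. This is the only point where the Jordan axiom (as opposed to mere commutativity) is used in that direction, so it is the step a careful reader will want to see justified.
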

\begin{proof}
 This follows from \cite{Ko}, Thm. 12, p.118.
In fact, in a simple Euclidean Jordan algebra, every associative symmetric bilinear form is a scalar multiple of $Tr(xy)$.
\end{proof}
\begin{lem}\label{L:ConnFrob}
Consider an associative, commutative unital subalgebra $ \mathscr{A} \subset \mathscr{A}^+$. Then, this subalgebra forms a Frobenius algebra. 
\end{lem}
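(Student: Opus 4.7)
The plan is to derive this as a direct inheritance statement from the structure already established on the ambient algebra $\mathscr{A}^+$, rather than to construct anything new. Recall that by Def.-Prop.~\ref{P:FA}, $\mathscr{A}^+$ is a unital commutative algebra carrying the symmetric bilinear form $\langle-,-\rangle$ induced from the Riemannian metric $g$ on $\Omega$, and by Lem.~\ref{L:Ass} this form satisfies the Frobenius-type compatibility $\langle x\circ y,z\rangle=\langle x,y\circ z\rangle$ for every triple of elements in $\mathscr{A}^+$. The only property a Frobenius algebra (in the sense of the definition given in the excerpt) needs beyond what $\mathscr{A}^+$ already has is associativity of the product $\circ$, which by the statement's hypothesis is precisely what the subalgebra $\mathscr{A}$ supplies.

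First I would verify that restriction preserves the algebraic structure: since $\mathscr{A}\subset \mathscr{A}^+$ is assumed to be a subalgebra which is unital and commutative, the product $\circ$ restricts to an internal binary operation on $\mathscr{A}$, the unit $\mathbf{1}_{\mathscr{A}^+}$ lies in $\mathscr{A}$, and commutativity is automatic. The hypothesis additionally furnishes associativity of $\circ$ on $\mathscr{A}$, which is the property that $\mathscr{A}^+$ itself need not possess (as it is merely a Jordan algebra in general).

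Next I would restrict the bilinear form. Let $\langle-,-\rangle_{\mathscr{A}}$ denote the restriction of $\langle-,-\rangle$ to $\mathscr{A}\times \mathscr{A}$. Symmetry is inherited immediately. Since the ambient form is induced from the Riemannian metric $g$, hence positive definite (cf.~Lem.~\ref{l:Hess}), its restriction to the subspace $\mathscr{A}$ is again positive definite and in particular non-degenerate and symmetric. For any $x,y,z\in \mathscr{A}\subset \mathscr{A}^+$ we may invoke Lem.~\ref{L:Ass} to conclude
\[
\langle x\circ y,z\rangle_{\mathscr{A}}=\langle x\circ y,z\rangle=\langle x,y\circ z\rangle=\langle x,y\circ z\rangle_{\mathscr{A}}.
\]
This is exactly the Frobenius identity from the definition, so $(\mathscr{A},\circ,\langle-,-\rangle_{\mathscr{A}})$ is a Frobenius algebra.

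I do not expect any real obstacle: the argument is essentially bookkeeping, with all nontrivial content — the existence of the product $\circ$, the symmetry of the rank-three tensor $C(\partial_i,\partial_j,\partial_k)$, and the resulting invariance of $\langle-,-\rangle$ under $\circ$ — already accomplished in Def.-Prop.~\ref{P:FA} and Lem.~\ref{L:Ass}. The only mild subtlety worth flagging is that the statement genuinely needs the associativity hypothesis on $\mathscr{A}$: the ambient algebra $\mathscr{A}^+$ is only a Jordan algebra, so one cannot drop this assumption and expect the conclusion to hold on $\mathscr{A}^+$ itself. Thus the proof effectively reads: \emph{the Frobenius compatibility is inherited for free; the role of the hypothesis is to supply associativity, which is the only Frobenius axiom the Jordan algebra $\mathscr{A}^+$ does not itself satisfy.}
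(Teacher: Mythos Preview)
Your proposal is correct and follows essentially the same route as the paper's proof: both arguments simply observe that the subalgebra $\mathscr{A}$ inherits the symmetric bilinear form and the compatibility $\langle x\circ y,z\rangle=\langle x,y\circ z\rangle$ from Lem.~\ref{L:Ass}, while the associativity hypothesis supplies the one Frobenius axiom that $\mathscr{A}^+$ itself lacks. Your write-up is slightly more thorough (e.g.\ noting that positive definiteness guarantees non-degeneracy of the restricted form), but the logical content is identical.
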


\begin{proof}
A Frobenius algebra is a unital, commutative, associative algebra (finite dimensional) equipped with a symmetric bilinear form $\langle-,-\rangle$ where:  $\langle x\circ y,z\rangle=\langle x,y \circ z\rangle$, forall $x,y,z$ lying in the Forbenius algebra.  

The subalgebra $\mathscr{A}$ is a commutative, associative, unital subalgebra of $\mathscr{A}^+$, equipped with a symmetric bilinear form satisfying associativity condition (by Lem.\ref{L:Ass}). So, $\mathscr{A}$ is a Frobenius algebra.
\end{proof}

\section{Frobenius structures}\label{S:Frobeniusmfd}
We set out some notions on Frobenius manifolds/ structures and conclude by showing that this is equivalent to satisfying the WDVV equation. 

\subsection{Compatible flat affine structures with a multiplication}
Recall the notion of flat structures introduced in Sec.~\ref{S:AffineFlat}. For brevity, let $\cM$ be a smooth manifold. Assume $\cT_\mathcal{M}$ is endowed with  $\circ$ a $\cO_\mathcal{M}$-bilinear commutative and associative  multiplication operation, eventually with unit $e$.  

\smallskip

 Then, according to \cite{Man05} [Def.2.2.1], a flat structure $\cT_\mathcal{M}^f$ on $\mathcal{M}$ is called compatible with $\circ$, if in a neighbourhood of any point there exists a vector field $\mathscr{C}$ such that for any arbitrary local flat vector fields we have 

\begin{equation}\label{E:C}
X\circ Y= [X,[Y,\mathscr{C}]],
\end{equation}
 $\mathscr{C}$ is a local vector potential for $\circ$.

Moreover, $\cT_\mathcal{M}^f$ is called compatible with $(\circ,{\bf 1}_\mathscr{A})$ if the equation \eqref{E:C}  holds and if $e$ is flat.

By \cite{Man05} Prop 2.2.2: If $\circ$ admits a compatible flat structure then it satisfies the identity:
 For all local vector fields $X,Y,Z,W$:
\begin{equation}\label{E:Poisson}
 P_{X\circ Y}(Z,W)=X\circ P_Y(Z,W)+(-1)^{XY}Y\circ P_{X}(Z,W), 
\end{equation}
where $P_{X}(Z,W):=[X,Z\circ W]-[X,Z]\circ W -(-1)^{XZ}Z\circ [X,W]$.

A manifold $(\cM,\circ)$, where $\circ$ is an $\cO_\mathcal{M}$-bilinear commutative and associative  multiplication operation satisfying the identity ~\ref{E:Poisson} is called an $F$-manifold (see Def 5.1 \cite{Man99}.

\subsection{Pencils of flat connections}\label{S:flatnesscondition}

\smallskip 
 
Let us consider the following input data:

\begin{itemize}
\item a flat structure $\nabla_0:\cT_\mathcal{M}\to \Omega^1_\mathcal{M}\otimes_\mathcal{M}\cT_\mathcal{M}$ on $\mathcal{M}$, where $\Omega_\mathcal{M}^1$ is the sheaf of holomorphic 1-forms on $\mathcal{M}$.
\item  An odd global section $\cA\in \Omega^1_\mathcal{M}\otimes_{\cO_\mathcal{M}}End(\cT_\mathcal{M}).$
\end{itemize}
Then, one can produce from it the following datum:

\begin{itemize}
\item A pencil of connections $\nabla_{\lambda}^\cA=\nabla_0+\lambda \cA$. 
\item  An $\cO_\mathcal{M}$-bilinear composition law $\circ$ on $\cT_\mathcal{M}$ given by:
\end{itemize}
\[X\circ Y:= i_X(\cA)(Y),\] 
where for $G\in End(\cT_\mathcal{M})$ and $df\in\Omega_\mathcal{M}^1$ (for $f\in \cO_\mathcal{M}$) the following is defined: $i_X(df\otimes G):= Xf \cdot G.$

\begin{prop}[\cite{Man05}, Prop 2.3.1.]\label{P:F}
$(\cM,\circ,\nabla^\cA_0)$ is an $F$-manifold with compatible flat structure iff $\nabla_{\lambda}^\cA$ is a pencil of torsionless flat connections.   
\end{prop}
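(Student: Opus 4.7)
The plan is to expand the torsion and curvature of $\nabla^{\cA}_\lambda = \nabla_0 + \lambda\cA$ as polynomials in $\lambda$, read off the structural conditions on $\circ$ encoded by each coefficient, and then match them against the axioms of an $F$-manifold with flat structure compatible with $\circ$. (I will work in the classical/even setting below; the oddness of $\cA$ only introduces the usual super signs in the commutators.)

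The torsion step is direct: since $\nabla_0$ is torsion-free and $\cA(X)(Y)=X\circ Y$,
\[
T_{\nabla_\lambda}(X,Y) = \lambda\bigl(X\circ Y - Y\circ X\bigr),
\]
so vanishing torsion for every $\lambda$ is equivalent to commutativity of $\circ$. Flatness of $\nabla_0$ makes the constant term of the curvature vanish, leaving
\[
R_{\nabla_\lambda}(X,Y)Z = \lambda R^{(1)}(X,Y)Z + \lambda^2\bigl(X\circ(Y\circ Z) - Y\circ(X\circ Z)\bigr),
\]
with
\[
R^{(1)}(X,Y)Z = (\nabla_0)_X(Y\circ Z) - (\nabla_0)_Y(X\circ Z) + X\circ (\nabla_0)_Y Z - Y\circ (\nabla_0)_X Z - [X,Y]\circ Z.
\]
Under commutativity, vanishing of the $\lambda^2$ coefficient is exactly associativity of $\circ$, so a torsionless flat pencil already forces $(\cT_\cM,\circ)$ to be commutative and associative.

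The substance of the argument is translating $R^{(1)}=0$ into the existence of a local vector potential $\mathscr{C}$. In $\nabla_0$-flat coordinates $(x^1,\dots,x^n)$, writing $X\circ Y = C^k_{ij}X^iY^j\,\partial_k$, Lie brackets and $\nabla_0$-derivatives of the basis vanish, so $R^{(1)}(\partial_i,\partial_j)\partial_k = 0$ collapses to the integrability relation $\partial_i C^l_{jk} = \partial_j C^l_{ik}$. Combined with the symmetry $C^l_{jk}=C^l_{kj}$ from commutativity, two successive applications of the Poincar\'e lemma produce locally a vector field $\mathscr{C} = \mathscr{C}^l\partial_l$ with $C^l_{jk} = \partial_j\partial_k\mathscr{C}^l$; a direct bracket computation then shows $X\circ Y = [X,[Y,\mathscr{C}]]$ on flat $X,Y$, which is compatibility of the flat structure with $\circ$ in the sense of~\eqref{E:C}. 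The Hertling--Manin identity~\eqref{E:Poisson} is tensorial and can be verified on flat frames, where all Lie brackets drop out and it reduces to the associativity just established; this completes one direction. Conversely, any $F$-manifold structure compatible with $\nabla_0$ supplies $\mathscr{C}$ with $C^l_{jk}=\partial_j\partial_k\mathscr{C}^l$, from which the integrability relation and hence $R^{(1)}=0$ follow, while commutativity and associativity deliver $T_{\nabla_\lambda}=0$ and $R^{(2)}=0$.

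The main obstacle is precisely the passage from $R^{(1)}=0$ to an honest local vector potential $\mathscr{C}$: it is the only place where the topology of $\cM$ enters (via contractibility of the chart), and it requires chaining the Poincar\'e lemma twice while carefully exploiting the symmetry $C^l_{jk}=C^l_{kj}$. Everything else is a bookkeeping exercise matching the $\lambda$-expansions of $T_{\nabla_\lambda}$ and $R_{\nabla_\lambda}$ against the $F$-manifold axioms.
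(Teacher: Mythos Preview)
The paper does not supply its own proof of this proposition; it is quoted directly from \cite{Man05} as background. Your argument is the standard one and is essentially correct: expanding $T_{\nabla_\lambda}$ and $R_{\nabla_\lambda}$ in $\lambda$ and reading off commutativity, associativity, and the closedness condition $\partial_i C^l_{jk}=\partial_j C^l_{ik}$, then integrating twice via the Poincar\'e lemma to produce the local vector potential $\mathscr{C}$, is exactly how Manin proves it.

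One small point deserves tightening. You assert that the Hertling--Manin identity~\eqref{E:Poisson} ``is tensorial and can be verified on flat frames, where all Lie brackets drop out.'' But the left-hand side is $P_{X\circ Y}(Z,W)$, and $X\circ Y$ is not a flat field even when $X,Y$ are, so Lie brackets do not simply vanish there. The cleaner route---and the one already available to you in the paper---is to observe that once you have produced $\mathscr{C}$ with $X\circ Y=[X,[Y,\mathscr{C}]]$, the identity~\eqref{E:Poisson} follows immediately from the result the paper cites just above as \cite{Man05}, Prop.~2.2.2. With that substitution your argument is complete.
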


In that case, $(\cM,\circ,\nabla_{\lambda}^\cA)$ is an $F$-manifold with compatible flat structure for any $\lambda$ as well. 
\subsection{(pre-)Frobenius manifolds}\label{S:FrobeniusmfdDATA}

 \medskip
 
Consider the following family of data:
\begin{equation}\label{E:1}
(\mathcal{M};\, \circ:\cT_\mathcal{M}\otimes \cT_\mathcal{M}\to \cT_\mathcal{M};\, \cT_\mathcal{M}^f\subset \cT_\mathcal{M}; \ g:S^2(\cT_\mathcal{M})\to \cO_\mathcal{M}),
 \end{equation}
 where: \begin{itemize}
 \item $\mathcal{M}$ is a manifold;
 \item  $\circ:\cT_\mathcal{M}\otimes \cT_\mathcal{M}\to \cT_\mathcal{M}$ is a multiplication operation on the tangent sheaf;  
 \item $\cT_\mathcal{M}^f\subset \cT_\mathcal{M}$ is the subsheaf of flat vector fields; 
 \item $g:S^2(\cT_\mathcal{M})\to \cO_\mathcal{M}$ is the non-degenerate symmetric quadratic form with $\cO_\mathcal{M}$ a sheaf of holomorphic functions on $\mathcal{M}$. 
 \end{itemize}

\smallskip 
 
The main additional structure gluing together this data is given by a family of (local) potentials $\Phi,$ 
being sections of $\cO_\mathcal{M}$, such that 
for any local tangent fields $X,Y,Z$:
\begin{equation}\label{E:!}
g(X\circ Y,Z)=g(X,Y\circ Z)=(XYZ)\Phi.
\end{equation}

If such a structure exists on $(\mathcal{M};\, \circ:\cT_\mathcal{M}\otimes \cT_\mathcal{M}\to \cT_\mathcal{M};\, \cT_\mathcal{M}^f\subset \cT_\mathcal{M}; \ g:S^2(\cT_\mathcal{M})\to \cO_\mathcal{M})$ then it forms a pre-Frobenius manifold.

\begin{dfn}
A pre-Frobenius manifold is called associative if the multiplication $\circ$ is associative. 

A pre-Frobenius manifold is called potential if $C$ admits everywhere locally admits a potential.

A pre-Frobenius manifold  is Frobenius if it is simultaneously potential and associative.    
\end{dfn} 

\smallskip

 \begin{thm}[Th.1.5, \cite{Man99}]
 The pre-Frobenius manifold $(\cM,\circ,\nabla_0)$ is a Frobenius manifold with compatible flat structure if and only if $\nabla_{\lambda}^\cA$ is a pencil of torsionless, flat connections. 
 \end{thm}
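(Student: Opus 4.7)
The plan is to verify both directions by directly computing the torsion and curvature of the pencil $\nabla_\lambda^\cA = \nabla_0 + \lambda\cA$, expanding each as a polynomial in $\lambda$, and matching the vanishing of the coefficients against the Frobenius axioms together with the compatibility of $\cT_\cM^f$ with $\circ$.

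First, a short computation gives
\[T^{\nabla_\lambda^\cA}(X,Y) = T^{\nabla_0}(X,Y) + \lambda\bigl(X\circ Y - Y\circ X\bigr),\]
so torsion-freeness for every $\lambda$ is equivalent to $\nabla_0$ being torsion-free (given, since $\nabla_0$ is a flat structure) together with commutativity of $\circ$ (built into pre-Frobenius data). Writing $\cA(X)$ for the endomorphism $Z\mapsto X\circ Z$, so that $\nabla_\lambda^X = \nabla_0^X + \lambda\cA(X)$, and expanding $R^\lambda(X,Y) = [\nabla_\lambda^X,\nabla_\lambda^Y] - \nabla_\lambda^{[X,Y]}$ in powers of $\lambda$, I obtain
\[R^\lambda(X,Y)Z = R^{\nabla_0}(X,Y)Z + \lambda\,R^{(1)}(X,Y)Z + \lambda^2\bigl[\cA(X),\cA(Y)\bigr]Z,\]
so that flatness of the pencil for all $\lambda$ is equivalent to the vanishing of each coefficient separately.

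The $\lambda^0$ term vanishes because $\nabla_0$ is flat. The $\lambda^2$ term vanishes iff the operators $\cA(X)$ commute pairwise, which, combined with commutativity of $\circ$, unwinds to $(X\circ Y)\circ Z = X\circ(Y\circ Z)$ --- associativity of $\circ$. For the $\lambda^1$ term, I would work in an affine flat chart $(x_1,\ldots,x_n)$ where $R^{(1)}(\partial_i,\partial_j)\partial_l = \bigl(\partial_i C^k_{jl} - \partial_j C^k_{il}\bigr)\partial_k$; the vanishing condition $\partial_i C^k_{jl} = \partial_j C^k_{il}$, together with commutativity $C^k_{jl} = C^k_{lj}$, allows a two-step Poincar\'e argument to produce, for each $k$, a local function $\mathscr{C}^k$ with $C^k_{ij} = \partial_i\partial_j\mathscr{C}^k$. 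Setting $\mathscr{C} = \mathscr{C}^k\partial_k$ this is precisely $e_i\circ e_j = [e_i,[e_j,\mathscr{C}]]$, i.e., the compatibility axiom of $\cT_\cM^f$ with $\circ$ from Eq.~\ref{E:C}. Combining this with the $g$-symmetry $g(X\circ Y,Z) = g(X,Y\circ Z)$ and $\nabla_0 g = 0$ makes the tensor $C_{ijk} = g(\partial_i\circ\partial_j,\partial_k)$ totally symmetric, and one further Poincar\'e step produces a scalar potential $\Phi$ with $C_{ijk} = \partial_i\partial_j\partial_k\Phi$, recovering potentiality.

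The main obstacle will be the bookkeeping for the $\lambda^1$ coefficient: cleanly identifying the tensor identity ``$\nabla_0\cA$ symmetric in its two lower arguments'' with, on the one hand, the local existence of the vector potential $\mathscr{C}$ (compatibility of the flat structure), and, on the other, the existence of a scalar potential $\Phi$ when the $g$-symmetry is invoked. Sorting out the two Poincar\'e steps --- and ensuring that no additional hypothesis (for instance on the linear/constant part of $\mathscr{C}$) is quietly being used --- is where the real care lies; the torsion and $\lambda^2$ arguments are essentially formal.
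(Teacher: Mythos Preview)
Your argument is correct and is essentially the standard proof (the one Manin gives in \cite{Man99}): expand torsion and curvature of $\nabla_\lambda^\cA$ as polynomials in $\lambda$, identify the $\lambda^0$, $\lambda^1$, $\lambda^2$ coefficients with, respectively, flatness of $\nabla_0$, local existence of a vector potential $\mathscr{C}$ (hence compatibility of the flat structure with $\circ$), and associativity, then use the metric symmetry and $\nabla_0 g=0$ to pass from the vector potential to a scalar potential $\Phi$. Note, however, that the present paper does \emph{not} supply its own proof of this theorem; it is quoted verbatim as Th.~1.5 of \cite{Man99} and used as a black box, so there is nothing in the paper to compare your proposal against beyond the citation itself.
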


\smallskip 
\subsection{Associativity equations}\label{S:WDVV}
\smallskip 

Consider an $F$-manifold $(\mathcal{M},\circ,\nabla_0)$ endowed with a compatible flat structure $\nabla_0.$

Consider the algebra $(\mathscr{A}, \circ)$ with basis $\{e_i\}_{i=1}^n$ and its structural constants $C^s_{ij}$ satisfying the relation \[e_i\circ e_j=C^s_{ij}e_s\] for indexes $i,j,s\in \{1,\cdots, n\}$. This algebra is defined at each point $p\in \mathcal{M}$ on the tangent space to $T_\mathcal{M}$.

If we choose a local flat coordinate system $(x_a)$ on $\cM$ and write the local vector potential $C$ as $C=\sum_cC^c\partial_c$, $X=\partial_a$ and $Y=\partial_b$ then 
 \[\partial_a\circ \partial_b=\sum C_{ab}^c\partial_c, \quad C^c_{ab}=\partial_a\partial_bC^c.\] 
 
 The choice of a flat invariant metric $g$ allows to define $C^a$ as $C^a=\sum_bg^{ba}\partial_b\Phi $. 
 
Let us consider:

\begin{equation}\label{LHS}
    (\partial_a\circ \partial_b)\circ \partial_c=\Bigg( \sum_e C^e_{ab}\partial_e\Bigg)\circ \partial_c=\sum_{ef} C_{ab}^eC^f_{ec}\partial_f.
\end{equation}

\smallskip

\begin{equation}\label{RHS}
\partial_a\circ (\partial_b\circ \partial_c) =\partial_a \circ \sum_f C^f_{bc}\partial_f=(-1)^{a(b+c+e)}\sum_{ef} C_{bc}^fC^e_{ae}\partial_e.
\end{equation}

If $\mathscr{A}$ is associative then
\[ C_{ab}^eC_{ec}^f=C_{ae}^eC_{cb}^f, \]

which amounts to $\partial_a\circ (\partial_b\circ \partial_c) = (\partial_a\circ \partial_b)\circ \partial_c$ i.e. Eq.\ref{LHS}=Eq.\ref{RHS}. 

Given that $C^c_{ab}:=\sum_e C_{abe} g^{ec}=\sum_e \Phi_{abe} g^{ec}$, where $(g^{ec}):=(g_{ec})^{-1}$ and $\Phi_{abe}=\partial_a\partial_b\partial_e\Phi$ we can rewrite the associativity relation for $\Phi$ as
 
 \[\forall a,b,c,d:\quad \sum_{ef}\Phi_{abe}g^{ef}\Phi_{fcd}=(-1)^{a(b+c)}\sum\Phi_{bce}g^{ef}\Phi_{fad},\]
 which corresponds to the WDVV equation. Given the flat identity denoted $e=\partial_0$, then the equation 
reduces to $\Phi_{0ab}=g_{ab}$.

\smallskip

\section{The WDVV equation and symmetric cones }
We proceed as follows.
\begin{enumerate}
\item We  prove the existence of totally geodesic submanifolds in Vinberg cones. 
\item We prove that those totally geodesic submanifolds carry Frobenius structures (i.e. satisfy the WDVV equation). 
\end{enumerate}
\subsection{A first insight}
To give an first  intuition to the reader of why should totally geodesic submanifolds in Vinberg cones exist, it is enough to go back to a theorem of Rothaus~\cite{Rot}. According to \cite{Rot}: {\it if  $\Omega$ is a cone of rank $k$ and of dimension $n$ then every cone $\Omega$ is birationally biregularly equivalent to the direct product of $k$ half lines and an Euclidean space.} The number of half lines appearing coincides with the rank of the Vinberg cone $\Omega$. 

\subsection{Totally geodesic submanifolds and Lie triple systems}
We will proceed to an algebraic investigation of the existence of totally geodesic submanifolds in $\Omega$. Using our method, it is possible to compute explicitly all the properties of those manifolds.

\begin{dfn}
Let $\Omega$ be a noncompact symmetric space.
A totally geodesic $r$-dimensional submanifold of $\Omega$ isometric to $\mathbb{R}^r $ is called an $r$-flat. If $r$ is the maximal natural number $r$ for which an $r$-flat exists then  it is a maximal flat.
\end{dfn}

Let $F$ denote an $r$-flat submanifold in an $n$-dimensional Vinberg cone $\Omega$.
We first outline an algebraic description of $F$.

\smallskip

\begin{thm}[Thm IV.4.2. and Thm IV.7.2 of \cite{Hel}]\label{T:Hel}
\begin{enumerate}~
 
    \item The curvature tensor $R$ evaluated at $T_e\Omega$ is given by 
\[R(X,Y)Z=-[[X,Y],Z],\, \text{for}\quad  X,Y,Z\, \in\,  T_e\Omega. \]

\item   The totally geodesic space $F\subset \Omega$ has the form 
$$F=\exp{\frak{a}}\cdot e,$$ where $\frak{a}\subseteq \ft$ is a Lie triple system i.e. $[[\frak{a},\frak{a}],\frak{a}]\subseteq \frak{a}$.
\item Totally geodesic submanifolds through $e$ are of the form $ \exp{\frak{a}}\cdot e$ where $\frak{a}\subseteq \ft $ is a Lie triple system. 

\end{enumerate}
\end{thm}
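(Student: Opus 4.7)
The plan is to work entirely at the infinitesimal level via the Cartan decomposition $\fg = \textgoth{k} \oplus \ft$ recalled in Section~\ref{S:Vinberg}, identifying $T_e\Omega \cong \ft$ and exploiting the bracket relations $[\textgoth{k},\textgoth{k}]\subseteq\textgoth{k}$, $[\textgoth{k},\ft]\subseteq\ft$, $[\ft,\ft]\subseteq\textgoth{k}$ together with the fact that the geodesic symmetry $s_e$ acts as $-\mathrm{Id}$ on $\ft$ and as $+\mathrm{Id}$ on $\textgoth{k}$.

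For part (1), the approach is to use that parallel transport along the geodesic $t\mapsto\exp(tX)\cdot e$ (for $X\in\ft$) coincides with the differential of the group action, so the Levi-Civita connection of the $G$-invariant metric agrees on $G/K$ with the canonical reductive connection. From this, together with invariance of $R$ under $s_e$, one immediately obtains $R(\ft,\ft)\ft\subseteq\ft$; computing directly in the structure equations, or via the Jacobi equation along $\exp(tX)\cdot e$, one then reads off the value $R(X,Y)Z=-[[X,Y],Z]$ for all $X,Y,Z\in\ft$.

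For the forward direction of (2) and (3), suppose $F\subseteq\Omega$ is totally geodesic through $e$ and set $\frak{a}:=T_eF\subseteq\ft$. The Gauss equation, combined with the vanishing of the second fundamental form, identifies the intrinsic curvature of $F$ with the restriction of the ambient curvature, so $R(X,Y)Z\in\frak{a}$ for all $X,Y,Z\in\frak{a}$. By part (1) this is precisely the Lie triple condition $[[\frak{a},\frak{a}],\frak{a}]\subseteq\frak{a}$. Since geodesics through $e$ are exactly the curves $t\mapsto\exp(tX)\cdot e$ with $X\in\ft$, and $F$ must contain every such geodesic with $X\in\frak{a}$, one recovers $F=\exp(\frak{a})\cdot e$ locally.

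For the converse, given a Lie triple system $\frak{a}\subseteq\ft$, I would set $\fg':=[\frak{a},\frak{a}]\oplus\frak{a}$ and verify that it is a Lie subalgebra of $\fg$. The inclusions $[\frak{a},\frak{a}]\subseteq\textgoth{k}$ and $[[\frak{a},\frak{a}],\frak{a}]\subseteq\frak{a}$ are immediate from the Cartan relations and the triple hypothesis, while closure of $[\frak{a},\frak{a}]$ under its own bracket follows from the Jacobi identity
\[
[[X,Y],[Z,W]] = [[[X,Y],Z],W] + [Z,[[X,Y],W]],
\]
each summand lying in $[\frak{a},\frak{a}]$ by the triple condition. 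The analytic subgroup $G'\subseteq G$ corresponding to $\fg'$ inherits the Cartan involution (its $\pm 1$-eigenspace splitting being exactly $[\frak{a},\frak{a}]\oplus\frak{a}$), so $G'\cdot e$ is itself a Riemannian symmetric subspace with tangent space $\frak{a}$ at $e$; since its intrinsic geodesics coincide with the ambient geodesics $\exp(tX)\cdot e$, one concludes that $F:=G'\cdot e=\exp(\frak{a})\cdot e$ is totally geodesic in $\Omega$. The main obstacle I foresee is this converse direction, specifically the closure of $\fg'$ under bracket (the Jacobi computation above being the crux) and then the upgrading of ``initially tangent to $\frak{a}$'' to ``totally geodesic'' via the symmetric pair $(G', G'\cap K)$; the rest of the argument is formal bookkeeping between the infinitesimal and group-level pictures.
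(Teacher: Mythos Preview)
Your argument is correct and is essentially the classical proof one finds in Helgason: the curvature formula via the canonical connection on a reductive homogeneous space, the forward implication via the Gauss equation, and the converse by building the subalgebra $\fg'=[\frak{a},\frak{a}]\oplus\frak{a}$ and passing to the associated symmetric subpair. Note, however, that the paper does not supply its own proof of this theorem at all---it is stated as a quotation of Thm.~IV.4.2 and Thm.~IV.7.2 of \cite{Hel} and used as a black box in the subsequent arguments (Prop.~\ref{P:Matrix}, Prop.~\ref{P:compatible}, Thm.~\ref{T:F-man}). So there is nothing to compare against beyond observing that what you have written is precisely the standard Helgason proof the citation points to.
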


By construction, we know that the sectional curvature restricted to $F$ equals zero. If $\frak{a}\subseteq \ft$ is a maximal abelian subspace of dimension $r$ then,  $F=\exp{\frak{a}}\cdot e$ is a maximal flat in $\Omega$.

\begin{rem}
    Given that $\fg$ are semisimple Lie algebras a possible class of Lie subalgebras $\frak{a}$ that satisfy the necessary requirements for producing a maximal flat are the Cartan subalgebras of $\fg$. They are maximal abelian subalgebras of $\fg$ and form a Lie triple system. 
\end{rem}

\begin{lem}
    Suppose that there exists a totally geodesic $F$ submanifold immersed in $\Omega$. Then, the Lie subalgebra $\frak{a}$ of $\ft$ attached to the tangent space $T_eF$ to $F$ is Lie associative.
\end{lem}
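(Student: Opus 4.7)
The plan is to derive this statement as an essentially immediate consequence of Theorem~\ref{T:Hel} (the cited results of Helgason), together with the observation that the author's term \emph{Lie associative} is exactly the closure condition for a Lie triple system, $[[\mathfrak{a},\mathfrak{a}],\mathfrak{a}]\subseteq \mathfrak{a}$.

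First I would reduce to a submanifold passing through the base point $e$. Because $\Omega$ is a symmetric space, the group $G(\Omega)$ acts transitively by isometries, and total geodesy is preserved under isometries; so after translating $F$ by a suitable $g\in G(\Omega)$ we may assume $e\in F$ and identify $\mathfrak{a}=T_eF$ as a subspace of $T_e\Omega\cong \mathfrak{t}$. This reduction is harmless because the intrinsic algebraic property we want to establish for $\mathfrak{a}$ is invariant under the isotropy action of $K(\Omega)$ on $\mathfrak{t}$.

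Next I would apply item (3) of Theorem~\ref{T:Hel} directly: totally geodesic submanifolds through $e$ are precisely those of the form $\exp(\mathfrak{a})\cdot e$ where $\mathfrak{a}\subseteq \mathfrak{t}$ is a Lie triple system, i.e.\ $[[\mathfrak{a},\mathfrak{a}],\mathfrak{a}]\subseteq \mathfrak{a}$. Unwrapping the definition, this says that the ternary operation $\{X,Y,Z\}:=[[X,Y],Z]$ restricts to a well-defined trilinear operation on $\mathfrak{a}$, which is the Lie-associativity claim in the lemma. As a cross-check (and to emphasize the geometric meaning), I would note that item (1) of the same theorem identifies this triple bracket with the curvature $R(X,Y)Z=-[[X,Y],Z]$; the Gauss equation then says that total geodesy of $F$ forces $R(X,Y)Z\in T_eF=\mathfrak{a}$ for all $X,Y,Z\in \mathfrak{a}$, recovering the triple closure by purely Riemannian means.

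The main obstacle is really terminological rather than mathematical: one must verify that "Lie associative" as used in the lemma is synonymous with the Lie-triple closure condition coming from Helgason. Once this identification is made, the reduction step and the invocation of Theorem~\ref{T:Hel}(3) complete the proof without further calculation.
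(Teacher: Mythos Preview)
Your guess about the meaning of ``Lie associative'' is incorrect, and this is the crux of the matter. In the paper, the conclusion is the literal identity
\[
[[Y,Z],X] \;=\; [Y,[Z,X]] \qquad \text{for all } X,Y,Z\in T_eF=\mathfrak{a},
\]
i.e.\ associativity of the Lie bracket on $\mathfrak{a}$ as a binary operation. This is a strictly stronger assertion than the Lie triple closure $[[\mathfrak{a},\mathfrak{a}],\mathfrak{a}]\subseteq\mathfrak{a}$ that you prove by invoking Theorem~\ref{T:Hel}(3). Your argument therefore establishes a different (and weaker) statement than the one intended.

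The paper's route is as follows. It uses not only that $F$ is totally geodesic but also that $F$ is \emph{flat} (an $r$-flat in the sense of the preceding definition), so that the intrinsic curvature of $F$ vanishes. Combined with the curvature formula $R_0(X,Y)Z=-[[X,Y],Z]$ from Theorem~\ref{T:Hel}(1), this gives $[[X,Y],Z]=0$ for all $X,Y,Z\in\mathfrak{a}$. The Jacobi identity, rewritten as $[[Y,Z],X]+[[Z,X],Y]=-[[X,Y],Z]$, then yields $[[Y,Z],X]=-[[Z,X],Y]=[Y,[Z,X]]$, which is the asserted associativity. Your Gauss-equation remark is on the right track but stops short: for a general totally geodesic submanifold of a symmetric space the induced curvature need not vanish (think of $\Pi_{n-1}(\mathbb{R})$ sitting block-diagonally inside $\Pi_n(\mathbb{R})$), so the flatness hypothesis is genuinely used and cannot be replaced by total geodesy alone. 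In short, you correctly identified the relevant Helgason input, but you solved for the wrong target and omitted the flatness step that drives the actual identity.
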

\begin{proof}
   By  Thm IV 4.2 in \cite{Hel}, the curvature tensor $R_0$ evaluated at $T_e\Omega$ is given by 
\begin{equation}\label{E:courb}
    R_0(X,Y)Z=-[[X,Y],Z], \quad X,Y,Z \in T_e\Omega.
\end{equation}
$F$ being totally geodesic it is flat. So, restricting attention onto $F$, the left hand side of Eq.\ref{E:courb} is 0. Thus, we have
$0=-[[X,Y],Z]$,  where $X,Y,Z \in T_eF$. Now, by Jacobi's identity: 
\[[X,[Y,Z]]+[Y,[Z,X]]+[Z,[X,Y]]=0.\] Rewriting it one gets $[[Y,Z],X]+[[Z,X],Y]=-[[X,Y],Z]$.
So, by Eq.\ref{E:courb} for $X,Y,Z \in T_eF$ one has
$[[Y,Z],X]=[Y,[Z,X]]$. Thus, the Lie associativity is satisfied on $F$.  
\end{proof}

\subsection{The case of space-like Lagrangian--Grassmanians cones}
We prove algebraically that in $n$-dimensional space-like Lagrangian--Grassmanians cones there exist totally geodesic immersed submanifolds being maximal flats in $\Omega$.

According to Thm.~\ref{T:Hel} totally geodesic submanifold are found if one has a Lie triple system. So, in particular we can take the example of Cartan subalgebras which form a Lie triple sytem. A generalisation is mentioned as  Cor.~\ref{C:totallygeodesic}.

\begin{prop}\label{P:Matrix}
Consider the $n$-dimensional irreducible Vinberg cones of Lagrangian--Grassmanian type. Then, for each of those cones there exists an $n-1$-dimensional totally geodesic submanifold given by $$F=\exp{\tilde{\frak{a}}}\cdot e,$$ such that ${\tilde{\frak{a}}}$ is a Cartan subalgebra of $\frak{gl}_n(\K)$ given by 
${\tilde{\frak{a}}}=\lambda I_n\oplus \frak{a}$, $\lambda\in\K$ where $\frak{a}$ is formed by diagonal matrices of null trace.  
\begin{enumerate}
    \item  If $\K=\R$, $\frak{a}$  is given by all diagonal matrices with real diagonal entries and such that the trace is 0. 
    \item  If $\K=\C$, $\frak{a}$  is given by all diagonal matrices with diagonal entries $a+\imath b$ and such that the trace is 0. 

    \item If $\K=\hH$, $\frak{a}$ is given by all diagonal matrices of $\bigg\{\begin{pmatrix}
        X& -\overline{Y}\\
        Y & \overline{X}
    \end{pmatrix}\, |\, \Re Tr X=0. \bigg\}$ 
    
    \item If $\K=\oO$, $\frak{a}$ is given by all diagonal $(3\times 3)$ matrices with diagonal entries $a+\imath b$ and of the diagonal matrices of a Cartan subalgebra of $\frak{g}_{2}$.
\end{enumerate}
\end{prop}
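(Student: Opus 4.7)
The plan is to reduce the problem to Theorem~\ref{T:Hel}(3): every totally geodesic submanifold of $\Omega$ through $e$ has the form $\exp(\tilde{\mathfrak{a}})\cdot e$ with $\tilde{\mathfrak{a}}\subseteq \ft$ a Lie triple system. Since every abelian subspace trivially satisfies $[[\tilde{\mathfrak{a}},\tilde{\mathfrak{a}}],\tilde{\mathfrak{a}}]=0$, it suffices, for each cone $\Pi_n(\K)$ of Lagrangian--Grassmannian type, to exhibit the prescribed $\tilde{\mathfrak{a}}=\R\cdot I_n\oplus\mathfrak{a}$ as an abelian subspace of $\ft$; choosing $\mathfrak{a}$ to be maximal abelian inside the semisimple part of $\ft$ then makes $F=\exp(\tilde{\mathfrak{a}})\cdot e$ a maximal flat of $\Omega$ in the sense of Helgason.

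First I would unpack, using the table in the proof of Proposition~\ref{C:VinFrob}, the Cartan decomposition $\fg=\textgoth{k}\oplus\ft$ for each cone. For $\K\in\{\R,\C,\hH\}$, Lemma~\ref{L:tan} together with the decomposition $GL_n(\K)=SL_n(\K)\rtimes \K^{\times}$ identifies $\ft$ with the traceless self-adjoint matrices over $\K$ plus the central summand $\R\cdot I_n$ (the dilation direction). Inside each such $\ft$ I would take $\mathfrak{a}$ to be the diagonal traceless self-adjoint matrices: for $\K=\R$, real diagonal matrices of trace zero; for $\K=\C$, the diagonal entries $a+\imath b$ that encode the standard Cartan subalgebra of $\mathfrak{gl}_n(\C)$ satisfying the trace-zero condition; for $\K=\hH$, the $2n\times 2n$ block-complex realization of a diagonal quaternionic matrix with $\Re\,\mathrm{Tr}\,X=0$ as in the statement. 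In each case diagonal matrices trivially commute, so $\tilde{\mathfrak{a}}$ is abelian and therefore a Lie triple system; maximality within the traceless self-adjoint diagonal matrices is the standard fact that these realize a Cartan subspace of the corresponding symmetric pair $(\mathfrak{sl}_n(\K),\textgoth{k})$.

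For the exceptional case $\Pi_3(\oO)$ one has $\fg=\mathfrak{e}_{6(-26)}\oplus\R$ and $\textgoth{k}=\mathfrak{f}_4$, and $\ft$ does not admit a naive $3\times 3$-matrix description because of the nonassociativity of $\oO$. I would instead use the classical embedding $\mathfrak{g}_2=\mathrm{Der}(\oO)\hookrightarrow \mathfrak{f}_4$ given by the action of octonionic derivations on the Albert algebra $\mathrm{Herm}(3,\oO)$, combined with the known decomposition of a Cartan subalgebra of $\mathfrak{e}_{6(-26)}$, relative to the symmetric pair $(\mathfrak{e}_{6(-26)},\mathfrak{f}_4)$, as the direct sum of a piece living in the diagonal part of $\mathrm{Herm}(3,\oO)$ (providing the two complex diagonal entries $a+\imath b$ subject to the trace condition) and a Cartan subalgebra of $\mathfrak{g}_2$. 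This gives precisely the description in item~(4); verifying abelianness reduces to the standard commutativity inside any Cartan subalgebra of $\mathfrak{g}_2$ and the fact that elements of a Cartan of $\mathfrak{g}_2$ commute with the diagonal octonion derivations selected.

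The hard part will be the octonionic case: one must perform the identification of $\ft$ and of its maximal abelian subspace with the prescribed form explicitly, via the Albert algebra and the $\mathfrak{g}_2\subset\mathfrak{f}_4$ embedding, whereas the three classical cases amount to the standard description of a Cartan subspace of a classical symmetric pair together with the trivial observation that diagonal matrices commute. Once $\tilde{\mathfrak{a}}$ is exhibited as an abelian (hence Lie triple) subspace of $\ft$, Theorem~\ref{T:Hel} immediately furnishes the totally geodesic submanifold $F=\exp(\tilde{\mathfrak{a}})\cdot e$ and concludes the proof.
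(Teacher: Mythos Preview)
Your proposal is correct and follows essentially the same approach as the paper: reduce to Helgason's theorem (Theorem~\ref{T:Hel}) on totally geodesic submanifolds via Lie triple systems, observe that abelian (in particular Cartan) subspaces of $\ft$ are trivially Lie triple systems, and then describe these case by case over $\K\in\{\R,\C,\hH,\oO\}$. Your treatment of the octonionic case via the embedding $\mathfrak{g}_2=\mathrm{Der}(\oO)\hookrightarrow\mathfrak{f}_4$ and the Albert algebra is in fact more detailed than the paper's own proof, which essentially just restates item~(4).
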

\begin{proof}
The noncompact symmetric spaces considered are of the type: $SL_n(\K)/K\rtimes \K^*$, where $K$ is the maximal compact subgroup at $e$. Thm IV.7.2 of \cite{Hel} implies that totally geodesic submanifolds through $e$ are of the form $exp{\frak{a}}\cdot e$, where $\frak{a}\subset \ft$ is a Lie triple system. Given that we are working with semi-simple Lie algebras we can investigate in particular the existence of their Cartan subalgebras which here will be maximal abelian subalgebras. The Cartan involution is given by $ X\mapsto -X^t$.
\smallskip 
One can prove that for the real division algebras $\K$, the Cartan subalgebras of $\frak{sl}_n(\K)$ are  diagonal matrices with null trace. We discuss this statement in detail, according to $\K$. 

\begin{enumerate}
    \item If $\K=\R$, then Lie algebra attached to the symmetric space has the Cartan decomposition $\frak{sl}_n(\R)=\frak{so}_n\oplus sym_0(n)$ where $\frak{s}ym_0(n)$ denotes the set of symmetric matrices of trace 0 with entries in $\R$. One can check that the maximal abelian subspace $\frak{a}$ of $sym_0(n)$ is given by the set of diagonal matrices of null trace.
The maximal flat is thus given by  
\[F=\exp{\frak{a}}\cdot e=\{Diag(\lambda_1,\cdots, \lambda_n): \,  \lambda_i=exp(t_i)\in\, \R,\,  \prod_{i=1}^n \lambda_i=1\}.\]

  \item If $\K=\C$, $\frak{a}$ is given by the diagonal matrices with complex entries. The maximal flat is 
\[F=\exp{\frak{a}}\cdot e=\{Diag(\lambda_1,\cdots, \lambda_n): \,  \lambda_i=\exp{a_i+\imath b_i}\in\, \C,\, \prod_{i=1}^n \lambda_i=1.\}\]

  \item If $\K=\hH$, $\frak{sl}_n(\hH)$ is given by diagonal matrices in $\bigg\{\begin{pmatrix}
        X& -\overline{Y}\\
        Y & \overline{X}
    \end{pmatrix}\, |\, \Re Tr X=0 \bigg\}.$ To define $\frak{a}$, one takes only the diagonal matrices in that set.

  \item If $\K=\oO$, $\frak{a}$ is given by all  diagonal $(3\times 3)$ matrices with diagonal entries $a+\imath b$ and of the diagonal matrices of a Cartan subalgebra of $\frak{g}_{2}$.
\end{enumerate}
\end{proof}

\begin{prop}\label{P:F-alg}
Consider a Vinberg cone of space-like Lagrangian--Grassmanian type. Then, 
at any point of the maximal flat, the tangent space to it carries the structure of a Frobenius algebra. 
\end{prop}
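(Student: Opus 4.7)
The plan is to identify $T_eF$ with an associative, commutative, unital subalgebra of the connection algebra $\mathscr{A}^+$ from Def.-Prop.~\ref{P:FA} (which for a Vinberg cone coincides with the Jordan algebra of self-adjoint matrices via Lem.~\ref{L:tan} and the discussion in Sec.~\ref{S:Algconn}), and then to invoke Lem.~\ref{L:ConnFrob} to conclude Frobenius-ness. By Prop.~\ref{P:Matrix}, $T_eF$ corresponds to the Cartan subalgebra $\tilde{\frak{a}}=\lambda I_n\oplus\frak{a}$, whose elements are simultaneously diagonalisable matrices in $\frak{gl}_n(\K)$.

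First I would check closure and commutativity at the base point $e$: because the elements of $\tilde{\frak{a}}$ commute pairwise, the Jordan product $X\circ Y=\tfrac{1}{2}(XY+YX)$ reduces to ordinary matrix multiplication $XY$ on $\tilde{\frak{a}}$, which is associative, commutative, again diagonal, and admits $I_n\in\tilde{\frak{a}}$ as a unit. Hence $T_eF$ embeds as a commutative, associative, unital subalgebra $\mathscr{A}\subset\mathscr{A}^+$. The Frobenius associativity $\langle X\circ Y,Z\rangle=\langle X,Y\circ Z\rangle$ holds on all of $\mathscr{A}^+$ by Lem.~\ref{L:Ass}, hence a fortiori on $\mathscr{A}$, and the ambient form $\langle X,Y\rangle=\Re\,\mathrm{Tr}(XY)$ restricts non-degenerately to diagonal self-adjoint matrices. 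So Lem.~\ref{L:ConnFrob} gives that $T_eF$ is a Frobenius algebra.

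To extend the conclusion to an arbitrary point $p\in F$, I would exploit the $G(\Omega)$-invariance of the Hessian metric $g=-\mathrm{Hess}(\ln\chi)$ (Lem.~\ref{l:Hess}), of the affine flat connection $\nabla_0$, and therefore of the multiplication on $\mathscr{A}^+$. Translating by any $\gamma\in G(\Omega)$ taking $e$ to $p$ transports the Frobenius algebra structure of $T_eF$ to one on $T_pF$; in particular, choosing $\gamma=\exp(H)$ with $H\in\tilde{\frak{a}}$ stabilises $F$ setwise, so $T_pF$ is again the image of a diagonal subspace and its product is the restriction of $\circ$ to that subspace.

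The main obstacle I anticipate is the octonionic case $\Pi_3(\oO)$, where non-associativity of $\oO$ threatens the whole argument, since the connection algebra $\mathscr{A}^+$ is only power-associative in that setting. This is handled by observing that self-adjointness forces diagonal entries of matrices in $\tilde{\frak{a}}$ to lie in $\R$, so the restricted subalgebra embeds into an associative matrix algebra over $\R$ (together with the Cartan-subalgebra-of-$\frak{g}_2$ correction noted in Prop.~\ref{P:Matrix}, which likewise commutes and remains associative on its image), and the reasoning goes through unchanged. A secondary subtlety is the precise identification of the abstract product $(X\circ Y)^i=-\sum\Gamma^i_{jk}X^jY^k$ of Def.-Prop.~\ref{P:FA} with the Jordan matrix product; this is guaranteed by the fact that $\Phi=\ln\chi$ is the Koszul-Vinberg potential, whose third derivative at $e$ recovers (up to sign) the Jordan trace form $\mathrm{Tr}(X(Y\circ Z))$.
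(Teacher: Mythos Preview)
Your proposal is correct and follows essentially the same route as the paper: identify $T_eF$ with the Cartan subalgebra $\tilde{\frak{a}}$ of diagonal matrices via Prop.~\ref{P:Matrix}, observe that this is a commutative, associative, unital algebra under the (Jordan/matrix) product, and equip it with the trace form $\langle X,Y\rangle=\mathrm{Tr}(XY)$, whose Frobenius associativity is Lem.~\ref{L:Ass}. You add two refinements the paper leaves implicit---the $G(\Omega)$-invariance argument transporting the structure from $e$ to an arbitrary $p\in F$, and the observation that in the octonionic case the diagonal entries are real so associativity is not in jeopardy---but the core argument is the same.
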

\begin{proof}

Let us discuss the case of  Vinberg cones of Lagrangian--Grassmanian type. Consider the maximal flat $F$, given by $F=\exp{\tilde{\frak{a}}}\cdot e$ where ${\tilde{\frak{a}}}=\lambda I_n\oplus \frak{a}$, $\lambda\in\K$ is a Cartan subalgebra of $\frak{gl}_n(\K)$ and where $\frak{a}$ are diagonal matrices of null trace. The set of diagonal matrices form an associative, commutative and unital algebra. Therefore, we have a Frobenius algebra structure on the tangent space to the $n-1$-flat. The tangent space to $F$ is given by $\tilde{\frak{a}}\subseteq \ft$. On $\ft$ there exists a bilinear symmetric form given by the Killing form. In particular, we have $\langle X,Y\rangle=tr(XY)$, where $X,Y\in T_e(G/K)$, which satisfies associativity by Lem.~\ref{L:Ass}. The Cartan subalgebra inherits this bilinear form. Thus, we have a Frobenius algebra.  
\end{proof}
\begin{lem}
   The $n$-dimensional Vinberg cones (1)--(3) are associated  with the Weyl chambers of type $A_n$.
\end{lem}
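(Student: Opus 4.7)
The plan is to compute the restricted root system of each of the three symmetric spaces $SL_n(\K)/K$ with $\K\in\{\R,\C,\hH\}$, and show that in all three cases it is of type $A_{n-1}$ (the paper's ``$A_n$'' under the usual shift in indexing), so that the Weyl chambers of $A_{n-1}$ are naturally identified with the diagonal slice of the cone $\Pi_n(\K)$ obtained in Proposition~\ref{P:Matrix}.

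First I would fix the maximal abelian subalgebra $\frak{a}\subset\ft$ already described in Proposition~\ref{P:Matrix}: the traceless diagonal matrices with real entries when $\K=\R$ and their natural analogues for $\C$ and $\hH$. For $H=\mathrm{diag}(h_1,\dots,h_n)\in\frak{a}$ and any off-diagonal matrix unit $E_{ij}$ ($i\neq j$) a direct computation yields $[H,E_{ij}]=(h_i-h_j)E_{ij}$. Hence the restricted roots are the linear functionals $\alpha_{ij}(H)=h_i-h_j$ on $\frak{a}$, which is the standard realization of the $A_{n-1}$ root system. Only the multiplicities of the root spaces change with $\K$ (they equal $d=1,2,4$ respectively, as recorded in the table of the proof of Proposition~\ref{C:VinFrob}), but the underlying abstract root system is $A_{n-1}$ in every case.

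Next I would invoke the standard fact that the Weyl group $W(A_{n-1})=S_n$ acts on $\frak{a}$ by permuting the entries $(h_1,\dots,h_n)$, and that the Weyl chambers are the connected components of the regular locus $\{H\in\frak{a}:\alpha_{ij}(H)\neq 0\text{ for all }i\neq j\}$. A distinguished positive chamber is $\frak{a}^+=\{H:h_1>h_2>\cdots>h_n\}$. Exponentiating and restoring the $\K^{\times}$ center produces the open subset $\exp(\tilde{\frak{a}}^+)\cdot e\subset\Pi_n(\K)$ consisting of positive definite diagonal elements with strictly decreasing (real) diagonal entries. By the spectral theorem valid over each of the three division algebras, every element of $\Pi_n(\K)$ is $K$-conjugate to a unique point of the closure $\overline{\exp(\tilde{\frak{a}}^+)\cdot e}$, exhibiting the Weyl chamber as a fundamental domain for the $K$-action transverse to the maximal flat $F=\exp(\tilde{\frak{a}})\cdot e$.

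The main obstacle I anticipate is the quaternionic case $\K=\hH$, where ``eigenvalues'' of a quaternion-Hermitian matrix must be interpreted via the associated $2n\times 2n$ complex matrix; once one checks that a maximal torus of $Sp(n)$ still quotients $\tilde{\frak{a}}$ by $S_n$, the argument collapses to the real case. I would close by noting that this explains why the lemma singles out the cones $(1)$--$(3)$: the exceptional cone $\Pi_3(\oO)$ has an $A_2$ restricted root system but requires the Albert-algebra machinery to treat, while the Lorentz cone $\Lambda_n$ has rank $2$ with a root system not of type $A$.
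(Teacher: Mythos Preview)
Your approach is essentially the same as the paper's: the core computation $[H,E_{ij}]=(h_i-h_j)E_{ij}$ identifying the restricted roots as $\alpha_{ij}=h_i-h_j$ (hence type $A_{n-1}$) is exactly what the paper does for $\K=\R$ and $\C$, with the other cases handled by the same remark that only root multiplicities change. Your version is more thorough---you spell out the Weyl group action, the chamber $\frak{a}^+$, and the quaternionic subtlety, whereas the paper's proof stops at the root-space decomposition and simply asserts that the $\hH$ case follows similarly---but the underlying argument is identical.
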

\begin{proof}
    For $SL_n(\R)/SO_n$ consider $H\in Diag(t_1,\cdots t_n)\in \frak{a}$. 
    One gets that \[(ad H)E_{ij}=[H,E_{ij}]=(t_i-t_j)E_{ij}.\] So, one has $n(n-1)$ non-zero roots. In particular, we have the following splitting
    \[\frak{sl}_n(\R)=\frak{a}+\sum_{i\neq j}\R\cdot E_{ij}.\]

  For $\C$, a similar situation occurs. More abstractly we can write:  \[\frak{sl}_n(\C)=\frak{a}+\oplus \bigg(\bigoplus_{i\neq j}\fg_{\lambda_i-\lambda_j},)\]
  where $\fg_{\lambda_i-\lambda_j}=Spac_{\C}(e_{ij})$ and $e_{ij}$ represents the basis vector in the $i$-th row and $j$-th column. For $\hH$ and $\oO$ a similar argument can be carried out.
\end{proof}

\begin{lem}
Let $\Omega$ be Vinberg cones of type (1)--(4).
A Weyl chamber is isomorphic to $\exp{\frak{a}^+}\in V$, where 
\begin{equation}\label{E:a+}
    \frak{a}^+:=\{Diag(t_1,\cdots, t_n):\, \sum_{i=1}^n t_i=0,\, t_1<\cdots <t_n\}.
\end{equation}
\end{lem}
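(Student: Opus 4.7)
The plan is to identify the Weyl chamber as a connected component of the regular part of the Cartan subalgebra $\frak{a}\subset\frak{sl}_n(\K)$ and then transport it to $V$ via the exponential map. By the preceding lemma, the root system of $\frak{sl}_n(\K)$ with respect to $\frak{a}$ is of type $A_{n-1}$: writing $H=\mathrm{Diag}(t_1,\dots,t_n)\in\frak{a}$ with $\sum t_i=0$, the roots are the linear functionals $\alpha_{ij}(H)=t_i-t_j$ for $i\neq j$, with root spaces $\fg_{\alpha_{ij}}=\K\cdot E_{ij}$.

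First, I would recall the general fact that a Weyl chamber is by definition a connected component of the regular locus
\[
\frak{a}_{\mathrm{reg}}=\frak{a}\setminus\bigcup_{i\neq j}\ker(\alpha_{ij})
=\{\,\mathrm{Diag}(t_1,\dots,t_n)\in\frak{a}\,:\,t_i\neq t_j\text{ for all }i\neq j\,\}.
\]
These connected components are labelled by total orderings of the coordinates and are permuted simply transitively by the Weyl group $W(A_{n-1})\cong S_n$ acting by permutation of the $t_i$'s. Fixing the standard positive system $\Delta^+=\{\alpha_{ji}\,:\,i<j\}$, the (open) fundamental chamber is exactly
\[
\frak{a}^+=\{\mathrm{Diag}(t_1,\dots,t_n)\in\frak{a}\,:\,t_1<\cdots<t_n\},
\]
which together with the trace zero condition matches the description in the statement. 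Any other Weyl chamber is mapped onto this one by a permutation matrix in $K$, so it suffices to treat $\frak{a}^+$.

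Second, I would transport $\frak{a}^+$ into $V$ via the exponential, exactly as in Prop.~\ref{P:Matrix}. Since $\frak{a}^+$ consists of diagonal matrices, the exponential is computed entrywise and the ambient non-commutativity of $\K=\hH,\oO$ plays no role on $\frak{a}$. Thus
\[
\exp:\frak{a}^+\longrightarrow V,\qquad \mathrm{Diag}(t_1,\dots,t_n)\longmapsto \mathrm{Diag}(e^{t_1},\dots,e^{t_n}),
\]
is a real-analytic diffeomorphism from the open convex region $\frak{a}^+$ onto the subset of diagonal self-adjoint matrices whose entries are strictly positive, strictly ordered, and multiply to $1$ (the last condition coming from $\sum t_i=0$). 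This is a smooth embedding into $V$ intertwining the $S_n$-action on $\frak{a}$ with the action by permutation of diagonal entries on its image, so $\exp$ descends to the desired isomorphism of Weyl chambers with $\exp(\frak{a}^+)\subset V$.

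The only delicate point is the step where one has to verify that the exponential map really is a diffeomorphism onto its image in the cases $\K=\hH$ and $\K=\oO$; I expect this to be the main technical obstacle, but it is mild because restriction to diagonal matrices makes the computation coordinate-wise and the inverse is given entrywise by the logarithm, composed on the octonion side with the inclusion of a real Cartan subalgebra of $\frak{g}_2$. Modulo this check the result follows from standard Weyl-chamber theory applied to the root system $A_{n-1}$ already identified in the preceding lemma.
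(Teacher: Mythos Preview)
Your proposal is correct and follows the same route as the paper: identify the open Weyl chamber inside $\frak{a}$ using the $A_{n-1}$ root system from the preceding lemma, and then push it into $V$ by the exponential. The paper's own argument is essentially a two-line version of what you wrote (``a Weyl chamber is an open Euclidean cone in $\frak{a}$, given by \eqref{E:a+}, hence isomorphic to $\exp\frak{a}^+$''), so your additional discussion of the regular locus, the $S_n$-action, and the entrywise exponential on diagonal matrices simply fills in the details the paper leaves implicit; your worry about $\K=\hH,\oO$ is unnecessary here since the relevant $\frak{a}^+$ sits inside the real diagonal matrices.
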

\begin{proof}
A Weyl chamber is isomorphic to an open Euclidean cone in $\frak{a}$. This chamber is given by Eq.\ref{E:a+}.
So, in the Vinberg cone $\Omega$ a Weyl chamber is isomorphic to $\exp{\frak{a}^+}\in V$.
\end{proof}

\subsection{The case of Lorentzian cones}

\begin{prop}\label{P:Lorentz}
 Consider the irreducible Lorentzian cone $\Lambda_n$. Then, there exists a totally geodesic submanifold $\mathscr{H}$ in $\Lambda_n$. This totally geodesic space is given by the maximal flat $\exp{\frak{a}}\cdot e$, where $\frak{a}$ is a maximal abelian subalgebra of the Lie algebra $\frak{o}(1,n-1)$. 
 \end{prop}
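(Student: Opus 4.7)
The plan is to mirror the argument used for the Lagrangian--Grassmannian cones in Prop.~\ref{P:Matrix}, now applied to the symmetric pair identified in the proof of Prop.~\ref{C:VinFrob}, namely $\Lambda_n \cong O(1,n-1)/(O(n-1)\oplus\R)$. The associated Lie algebra is $\fg = \frak{o}(1,n-1)\oplus\R$ with Cartan decomposition $\fg = \textgoth{k}\oplus\ft$ in which $\textgoth{k}=\frak{o}(n-1)$ and $\ft \cong \R\oplus\R^{n-1}$, as recorded in the table at the end of the proof of Prop.~\ref{C:VinFrob}. The engine is Thm.~\ref{T:Hel}: it suffices to exhibit a Lie triple system $\frak{a}\subseteq\ft$, after which $\mathscr{H}:=\exp(\frak{a})\cdot e$ will automatically be totally geodesic in $\Lambda_n$.

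Next I would choose $\frak{a}:=\R\oplus\R\cdot H_0\subset\ft$, where the $\R$ summand is the dilation direction (which lies in the center of $\fg$ and hence commutes with everything in $\ft$) and $H_0$ is any single boost generator in $\frak{o}(1,n-1)$. The subspace $\frak{a}$ is abelian, so $[\frak{a},\frak{a}]=0$ and the Lie triple condition $[[\frak{a},\frak{a}],\frak{a}]\subseteq\frak{a}$ holds trivially. Maximality of $\frak{a}$ as an abelian subspace of $\ft$ comes from the fact that two non-parallel boost generators fail to commute in $\frak{o}(1,n-1)$; and $\dim\frak{a}=2$ matches the rank of $\Lambda_n$ listed in the same table.

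Applying Thm.~\ref{T:Hel}(3) then yields $\mathscr{H}=\exp(\frak{a})\cdot e$ as a totally geodesic submanifold of $\Lambda_n$ through $e$. By Thm.~\ref{T:Hel}(1) its sectional curvature vanishes, since $R(X,Y)Z=-[[X,Y],Z]=0$ for every $X,Y,Z\in\frak{a}$; so $\mathscr{H}$ is flat, and by the dimension count it is a maximal flat, as required by the statement.

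The main subtlety I expect is that Helgason's Theorem IV.7.2 is classically formulated for Riemannian symmetric spaces, whereas $\Lambda_n$ carries a Lorentzian invariant metric. The step requiring the most care is therefore verifying that the correspondence between Lie triple systems in $\ft$ and totally geodesic submanifolds through $e$ persists in the pseudo-Riemannian setting. This should reduce to observing that the proof of that correspondence uses only the Cartan involution and the decomposition $\fg=\textgoth{k}\oplus\ft$, both of which remain intact for the Lorentzian symmetric pair $(O(1,n-1),O(n-1)\oplus\R)$ at hand.
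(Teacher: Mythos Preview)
Your argument is correct and follows the same skeleton as the paper: exhibit a maximal abelian subspace $\frak{a}\subseteq\ft$ for the symmetric pair attached to $\Lambda_n$ and then apply Thm.~\ref{T:Hel} to produce the flat $\exp(\frak{a})\cdot e$. The paper differs only in presentation: it works with an explicit block-matrix description of $\frak{o}(p,q)$ (with $p=n-1$, $q=1$), takes $\ft$ to be the off-diagonal block, and records two families of maximal abelian subalgebras---the orthogonally decomposable (Cartan) type and a second nilpotent type given by a concrete matrix pattern---before invoking Thm.~\ref{T:Hel}. You instead single out one boost $H_0$ together with the central dilation, verify abelianness directly, and read off maximality from the rank-$2$ entry in the classification table; your inclusion of the dilation summand is in fact more faithful to the full $\fg=\frak{o}(1,n-1)\oplus\R$ than the paper's proof, which stays inside $\frak{o}(1,n-1)$. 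On the pseudo-Riemannian caveat you flag: the paper does not address it, but observe that the canonical Hessian metric $g=\nabla_0^2\Phi$ of Lem.~\ref{l:Hess} is positive definite on every Vinberg cone, so Helgason's Riemannian statement applies without modification---the ``Lorentzian'' label refers to the defining quadratic form of the cone, not to the signature of $g$.
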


\begin{proof}
The Lorentzian cone is related to pseudo-orthogonal Lie algebras. It is described in terms of quotients of Lie groups by $O(p,q)/O(p)\times O(q)$, with $p\geq q$, where $p=n-1$ and $q=1$.

To  the corresponding Lie algebra, there exists a non-empty maximal abelian subalgebra.
Consider the matrix corresponding to an element of $\frak{o}(p,q)$:
\begin{center}
\includegraphics[scale=0.2]{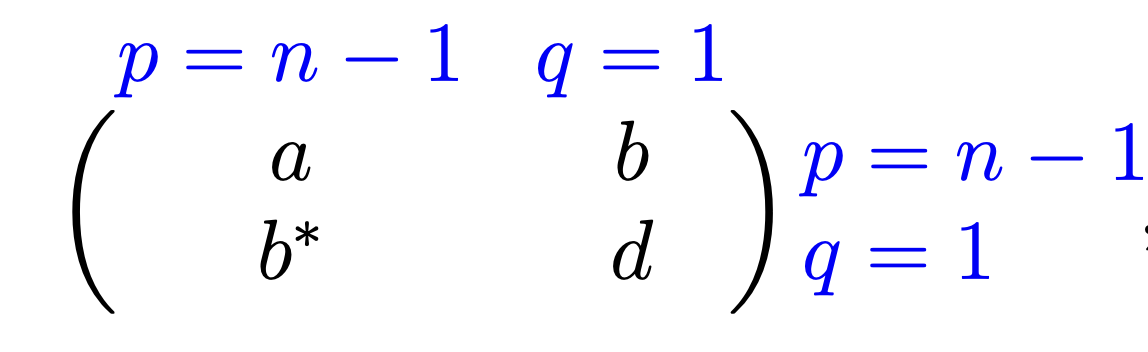}
\end{center}

where entries are real and $a,d$ are skew symmetric. 
The blue numbers correspond to numbers of rows/columns. We take $\textgoth{k}$ as matrices with $b=0 $; $\ft$ are matrices with $a=0$ and $d=0$. It has been shown that for $\frak{o}(p,q)$, one has a class of maximal abelian subalgebra which are formed by orthogonally decomposable matrices (these are the Cartan subalgebras). All matrices in that set can be simultaneously represented by block diagonal matrices, having the same decomposition patterns. 
A second type of maximal abelian subalgebra  occurring has a matrix representation as follows: \begin{equation}\label{E:typ2}
    \begin{pmatrix}
   0 & \alpha & \cdots &0 & \\
   \\  \cdots & & \cdots & -\alpha^T\\
   \\    0 & & \cdots & 0\\
     &0 & &\cdots & 0\\
\end{pmatrix}_{n\times n}
\text{for some vector} \, \alpha.
\end{equation}

We construct the maximal flats via  Thm. \ref{T:Hel}. The flat is a non-empty subspace given that the maximal abelian subalgebra.  So, there exists a non-empty totally geodesic submanifold $\cH$ in $\Lambda_n.$ 
\end{proof}
\smallskip 

The Lorentzian cone $\Lambda_n$ is of the Anti-de-Sitter type. Anti-de-Sitter spaces are Lorentzian manifolds with negative constant sectional curvature.

\begin{lem}
Consider $\mathscr{H}$ the totally geodesic submanifold in $\Lambda_n$. Then, the tangent space at at any point of $\mathscr{H}$ carries the structure of a (non unital) Frobenius algebra. 
\end{lem}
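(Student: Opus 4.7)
The plan is to mimic the proof of Proposition~\ref{P:F-alg}, with two modifications adapted to the Lorentzian setting. By homogeneity of the symmetric space $\Lambda_n = O(1,n-1)/O(n-1)\oplus \mathbb{R}$, it suffices to establish the algebra structure at the base point; the isotropy action then transports it to every other point of $\mathscr{H}$. By Proposition~\ref{P:Lorentz}, $\mathscr{H} = \exp(\mathfrak{a})\cdot e$ for a maximal abelian subalgebra $\mathfrak{a}\subset \mathfrak{o}(1,n-1)$, so $T_e\mathscr{H}$ is identified with $\mathfrak{a}$.

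Next, I would equip $\mathfrak{a}$ with the multiplication $\circ$ of Definition-Proposition~\ref{P:FA}, whose structure constants $\Gamma^{i}_{jk}=\tfrac{1}{2}\partial_{jkl}\Phi\, g^{li}(e)$ come from the KV-potential $\Phi=\ln\chi$ restricted to $\mathscr{H}$. Commutativity of $\circ$ is immediate from the symmetry of $\Gamma^i_{jk}$ in its two lower indices. Associativity on $\mathscr{H}$ reflects the fact that $\mathscr{H}$ is totally geodesic and flat: sectional curvature vanishes along $\mathscr{H}$, and the Lie-triple-system property $[[\mathfrak{a},\mathfrak{a}],\mathfrak{a}]\subset\mathfrak{a}$ combined with abelianness of $\mathfrak{a}$ forces the identity needed to pass from associativity of matrix multiplication to associativity of $\circ$, exactly as in Eqs.~\ref{LHS}--\ref{RHS}.

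For the Frobenius property I would take, as in the space-like case, the restriction to $\mathfrak{a}$ of the form $\langle X,Y\rangle = \Re\,\mathrm{Tr}(XY)$ inherited from the Killing form of $\mathfrak{o}(1,n-1)$; this is a symmetric bilinear form of Lorentzian signature, as is proper for an Anti-de-Sitter cone. The associativity identity $\langle X\circ Y, Z\rangle = \langle X, Y\circ Z\rangle$ is then exactly Lemma~\ref{L:Ass}, following from the total symmetry of the rank-three tensor $C(X,Y,Z)=(XYZ)\Phi$. Non-unitality --- the feature distinguishing this statement from Proposition~\ref{P:F-alg} --- is automatic: the identity matrix $I_n$ does not belong to $\mathfrak{o}(1,n-1)$ at all, so no element of $\mathfrak{a}\subset \mathfrak{o}(1,n-1)$ can act as a unit for $\circ$.

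The main obstacle I anticipate is showing that $\circ$ actually sends $\mathfrak{a}\times\mathfrak{a}$ back into $\mathfrak{a}$ rather than leaking into the complementary part of $\mathfrak{t}$. For the orthogonally decomposable type (the Cartan-subalgebra case), closure is immediate because simultaneously block-diagonalisable matrices are stable under both matrix multiplication and symmetrisation. For the second type of maximal abelian subalgebra described in Eq.~\ref{E:typ2}, each element $X$ satisfies $X^2=0$, and the product of two distinct elements requires a direct check; this is where the bulk of the computational work lies. A secondary concern is whether the restricted Killing form could degenerate on $\mathfrak{a}$ in the Lorentzian setting, but non-degeneracy on a Cartan subalgebra of the semisimple Lie algebra $\mathfrak{o}(1,n-1)$ rules this out.
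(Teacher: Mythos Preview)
Your route diverges from the paper's in one essential place: the choice of multiplication on $\mathfrak{a}$. You equip $\mathfrak{a}$ with the connection-algebra product $\circ$ of Definition--Proposition~\ref{P:FA}, whose structure constants $\Gamma^i_{jk}$ are built from third derivatives of $\Phi$. The paper does not do this here, nor in Proposition~\ref{P:F-alg} which you say you are mimicking: in both places the multiplication is the \emph{ordinary matrix product} restricted to $\mathfrak{a}$. For the orthogonally decomposable (simultaneously block-diagonal) class of maximal abelian subalgebras this product is manifestly associative and commutative; the paper then asserts the same for the nilpotent class of Eq.~\ref{E:typ2}, invokes the bilinear form inherited from $\mathfrak{t}$ together with Lemma~\ref{L:Ass} for the Frobenius identity, and stops. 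Non-unitality is not argued separately.

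The gap in your version is the associativity step. You say flatness of $\mathscr{H}$ and the abelian Lie-triple condition ``force the identity needed to pass from associativity of matrix multiplication to associativity of $\circ$, exactly as in Eqs.~\ref{LHS}--\ref{RHS}'', but those equations only \emph{express} what associativity of $\circ$ means; they do not derive it from curvature. Worse, by the paper's own Proposition~\ref{P:compatible}, vanishing curvature on the totally geodesic flat forces $C_{ijk}=\partial_{ijk}\Phi=0$ there, so your structure constants $\Gamma^i_{jk}$ vanish and $\circ$ is identically zero on $\mathfrak{a}$. That is trivially a commutative, associative, non-unital Frobenius algebra, but it is degenerate and not the structure the lemma is recording. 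To obtain the nontrivial algebra the paper has in mind you must work with the matrix product on $\mathfrak{a}$, for which associativity is free and the only real issue is the closure check you already flagged for the two types of $\mathfrak{a}$.
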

\begin{proof}
We have a maximal abelian subalgebra $\frak{a}\subset \ft$ which is associated to $\mathscr{H}$, via $\exp{\frak{a}}\cdot e$ . This defines the ``flat''. Taking $\frak{a}$ to be represented by block diagonal matrices having same decomposition pattern, one can prove that for the standard matrix multiplication this forms this forms an associative, commutative algebra. Given that $\frak{a}$ inherits a symmetric bilinear form from $\ft$ satisfying Lem.~\ref{L:Ass} it is a Frobenius algebra. Associativity and commutativity also holds for the class of matrices of type given in formula~\ref{E:typ2} as well as the symmetric bilinear form satisfying Lem.~\ref{L:Ass}. 

\end{proof}

\subsection{Main statements}
\subsection{New sources of $F$-manifolds}
We prove first that on $F$, the multiplication $\circ$ on the tangent sheaf admits a compatible flat structure.

\begin{prop}\label{P:compatible}
Let $\Omega$ be a Vinberg cone. 
    Consider $F$ a totally geodesic immersed submanifold of $\Omega$. Then, for any vector fields $X,Y$ in $\cT_F$ there exists a local vector potential $\mathscr{C}$ such that we have the following compatibility relation 
    \begin{equation}\label{E:Crochet}
        X\circ Y=[X,[Y,\mathscr{C}]].
    \end{equation}
\end{prop}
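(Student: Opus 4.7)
The plan is to build an explicit local vector potential $\mathscr{C}$ on $F$ from the Koszul--Vinberg potential $\Phi=\ln\chi$ of the ambient cone, and then to verify~\eqref{E:Crochet} on a basis of flat vector fields on $F$; since both sides are $\R$-bilinear in flat arguments, this is enough to obtain the identity for all flat local sections $X,Y\in\cT_F$.

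First I would fix a flat affine coordinate system $(x_1,\dots,x_r)$ on $F$. Such a system is available because $F=\exp(\frak{a})\cdot e$ with $\frak{a}$ an abelian Lie triple system (Propositions~\ref{P:Matrix} and~\ref{P:Lorentz}): linear coordinates on $\frak{a}$ push forward through $\exp$ to flat coordinates on $F$. In these coordinates the flat basis $\partial_1,\dots,\partial_r$ is pairwise commuting, and the restricted metric $g$ has constant coefficients, so $(g^{ab})$ is constant as well. Using Def-Prop~\ref{P:FA} applied to the restriction of $\Phi=\ln\chi$ to $F$, one has $\partial_a\circ\partial_b=-\sum_c\Gamma^c_{ab}\,\partial_c$ with $\Gamma^c_{ab}=\tfrac12\sum_l g^{lc}\,\partial_a\partial_b\partial_l\Phi$. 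I would then set
\[\mathscr{C}:=-\tfrac12\sum_{c,l} g^{lc}\,(\partial_l\Phi)\,\partial_c.\]
Because the coefficients $g^{lc}$ are constant, a direct computation yields $[\partial_b,\mathscr{C}]=-\tfrac12\sum_{c,l}g^{lc}\,(\partial_b\partial_l\Phi)\,\partial_c$ and, iterating,
\[[\partial_a,[\partial_b,\mathscr{C}]]=-\tfrac12\sum_{c,l}g^{lc}\,(\partial_a\partial_b\partial_l\Phi)\,\partial_c=-\sum_c\Gamma^c_{ab}\,\partial_c=\partial_a\circ\partial_b,\]
which is~\eqref{E:Crochet} on the flat generators, and thus on all flat sections by $\R$-linearity of the Lie bracket.

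The main obstacle lies in the preparatory compatibility step. One must ensure that $\Phi=\ln\chi$, which is a Hessian potential on the ambient cone by Lem.~\ref{l:Hess}, continues to play that role after restriction to $F$; equivalently, that the third derivatives $\partial_a\partial_b\partial_l\Phi|_F$ computed in the flat coordinates on $F$ really do reproduce the structure constants of the Frobenius algebra on $T_pF$ constructed in Prop.~\ref{P:F-alg}. This compatibility rests on combining two ingredients: the vanishing of the second fundamental form of $F\hookrightarrow\Omega$ (total geodesy, which makes the restricted connection equal to the restriction of the ambient connection), and the $\frak{a}$-invariance of the Hessian structure of Lem.~\ref{l:Hess} (which guarantees that the Hessian of $\Phi$ restricts consistently along the flow of the abelian subalgebra generating $F$). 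Once these compatibilities are in place, the explicit verification above closes the argument.
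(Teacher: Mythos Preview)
Your approach is correct but takes a genuinely different route from the paper's. The paper does not construct $\mathscr{C}$ at all: it argues that on the totally geodesic submanifold $F$ both sides of~\eqref{E:Crochet} vanish identically. For the right-hand side it invokes Helgason's formula $R_0(Y,Z)X=[X,[Y,Z]]$ together with the flatness of $F$ to conclude $[X,[Y,\mathscr{C}]]=0$ for any $\mathscr{C}$; for the left-hand side it asserts that flatness of the restricted metric forces the third derivatives $C_{ijk}=\partial_i\partial_j\partial_k\Phi$ to vanish on $F$, hence $X\circ Y=0$. Your argument, by contrast, produces an explicit vector potential $\mathscr{C}=-\tfrac12\sum g^{lc}(\partial_l\Phi)\partial_c$ and checks the identity by a direct bracket computation in flat coordinates with constant $g^{ab}$. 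This is more constructive and does not depend on the vanishing of $C_{ijk}$ on $F$; indeed your computation would establish compatibility of $\circ$ with the flat structure on any Hessian submanifold where one can arrange flat coordinates with constant inverse metric, not only on maximal flats. The paper's route is shorter but leans on the step ``scalar curvature vanishes iff $C_{ijk}=0$'', which is specific to the symmetric-space situation at hand; your route sidesteps that step entirely, at the cost of the compatibility check you flag (that the restriction of $\Phi$ to $F$ still drives the multiplication via Def.--Prop.~\ref{P:FA}), which is indeed the point requiring care.
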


\begin{proof}
By \cite{Hel}, Thm IV 4.2 one has the following:
$$R_0(Y,Z)X=[X,[Y,Z]],$$ where $X,Y,Z \in T_e\Omega$.
The submanifold $F$ being totally geodesic, the scalar curvature vanishes on $F$. So,  
the curvature tensor is 0 and the right hand side of Eq.\ref{E:Crochet} is 
$$[X,[Y,Z]]=0.$$ 

Consider the left hand side of Eq.~\ref{E:Crochet}. The multiplication operation in the algebra is given by: $$(X\circ Y)^i=\Gamma_{jk}^iX^jY^k,$$ where $\Gamma_{jk}^i=g^{li}\partial_{jkl}\Phi$. The scalar curvature vanishes if and only if $C_{ijk}=\partial_{ijl}\Phi$ vanishes.
So, $X\circ Y =0$ and thus $X\circ Y=[X,[Y,\mathscr{C}]]$.    
\end{proof}

\begin{thm}\label{T:F-man}
    Let $\Omega$ be a Vinberg cone. Consider $F$ a totally geodesic immersed submanifold of $\Omega$ given by $F=\exp{\frak{a}}\cdot e$, where $\frak{a}\subset \ft$ is a Lie triple system. Then, the flat structure on $F$ is compatible with $\circ$ and $(F,\circ)$ is an $F$-manifold. 
\end{thm}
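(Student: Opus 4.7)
The plan is to assemble the $F$-manifold structure on $F$ by stacking three ingredients already in hand: a flat affine structure on $F$ inherited from the totally geodesic embedding; the commutative multiplication $\circ$ from the connection algebra $\mathscr{A}^+$ of Def.-Prop.~\ref{P:FA} restricted to $\cT_F$; and the compatibility relation $X\circ Y=[X,[Y,\mathscr{C}]]$ proved in Prop.~\ref{P:compatible}. Once these are in place, Manin's criterion (Prop.~2.2.2 of \cite{Man05}, recalled just before our Prop.~\ref{P:F}) automatically delivers the defining identity \eqref{E:Poisson} of an $F$-manifold.

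First I would exhibit the flat structure on $F$. Since $F=\exp{\frak{a}}\cdot e$ with $\frak{a}\subset\ft$ a Lie triple system, Thm.~\ref{T:Hel} gives $R_0(X,Y)Z=-[[X,Y],Z]$ on $T_e\Omega$, and — as in the proof of Prop.~\ref{P:compatible} — this curvature tensor vanishes identically upon restriction to $T_eF$. Consequently the connection induced by $\nabla_0$ on $F$ is torsionless and flat, and the sheaf $\cT_F^f\subset \cT_F$ of flat vector fields exists as a local system of the rank $\dim F$ required in item (3) of Sec.~\ref{S:AffineFlat}.

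Second I would port the connection-algebra product $\circ$ to $\cT_F$. Commutativity follows from the symmetry of the tensor $\partial_j\partial_k\partial_l\Phi$, associativity from Lem.~\ref{L:Ass} together with Prop.~\ref{P:F-alg}, and the symmetric bilinear form $\langle X,Y\rangle=\Re\,\mathrm{Tr}(XY)$ restricts non-degenerately to $\cT_F$. By Prop.~\ref{P:compatible} there exists a local vector potential $\mathscr{C}$ with $X\circ Y=[X,[Y,\mathscr{C}]]$ for all flat $X,Y\in \cT_F^f$, which is precisely Eq.~\eqref{E:C}, i.e. the compatibility of $\circ$ with the flat structure. Invoking the implication \emph{compatibility $\Rightarrow$ identity \eqref{E:Poisson}} then shows that $\circ$ satisfies the defining $F$-manifold identity, so $(F,\circ,\nabla_0^F)$ is an $F$-manifold with compatible flat structure, as required.

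The main obstacle I anticipate is interpretive rather than computational: the proof of Prop.~\ref{P:compatible} actually forces both sides of Eq.~\eqref{E:Crochet} to vanish on $F$, so the resulting $F$-manifold structure is degenerate in the connection-algebra sense. Care is therefore needed to reconcile this with the non-trivial Frobenius-algebra multiplication on $T_eF$ established in Prop.~\ref{P:F-alg}, and to make precise which multiplication is meant by $\circ$ in the statement so that the $F$-manifold axioms are genuinely — and not vacuously — satisfied on the whole of $F$.
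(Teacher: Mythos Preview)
Your proposal is correct and follows essentially the same route as the paper: invoke Thm.~\ref{T:Hel} for the totally geodesic/flat structure on $F$, then Prop.~\ref{P:compatible} for the compatibility $X\circ Y=[X,[Y,\mathscr{C}]]$, and finally Manin's implication (compatible flat structure $\Rightarrow$ identity~\eqref{E:Poisson}) to conclude $(F,\circ)$ is an $F$-manifold. Your additional detour through associativity via Lem.~\ref{L:Ass} and Prop.~\ref{P:F-alg} is not needed for the $F$-manifold conclusion itself (associativity enters only later, at the Frobenius stage), and your closing caveat---that Prop.~\ref{P:compatible} actually forces both sides of \eqref{E:Crochet} to vanish, so the resulting structure is degenerate---is a legitimate observation that the paper does not address.
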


\begin{proof}
    One applies directly the statement from \cite{Hel} (Theorem~\label{T:Hel}) ensuring that $F$ is totally geodesic iff  $\frak{a}\subset \ft$ is a Lie triple system.
     We have shown in Prop.~\ref{P:compatible} that one has a flat structure compatible with $\circ$ on $F$. Thus, by Manin [Man] if the multiplication $\circ$ on the tangent sheaf admits a compatible flat structure it forms an $F$-manifold. 
        
\end{proof}

\
\begin{cor}\label{C:totallygeodesic}
    Let $\Omega$ be a Vinberg cone. Then, $F=\exp{\frak{a}}\cdot e$ in $\Omega$ is an $F$-manifold iff $\frak{a}$ is a Lie triple system.
\end{cor}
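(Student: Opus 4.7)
The plan is to derive the corollary as a biconditional repackaging, using Theorem~\ref{T:F-man} for one direction and the characterization in Theorem~\ref{T:Hel} for the other. Since $F$ is specified to be of the form $\exp{\frak{a}} \cdot e$, the corresponding subspace $\frak{a} \subseteq \ft = T_e\Omega$ is already identified with $T_eF$, so the question reduces to matching the $F$-manifold condition with the Lie triple condition on $\frak{a}$.

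For the ($\Leftarrow$) direction, assume $[[\frak{a},\frak{a}],\frak{a}] \subseteq \frak{a}$. Then part (3) of Theorem~\ref{T:Hel} immediately gives that $F = \exp{\frak{a}} \cdot e$ is a totally geodesic submanifold of $\Omega$ through $e$. Invoking Theorem~\ref{T:F-man} then yields that $(F,\circ)$ carries an $F$-manifold structure. No further computation is needed since these two results compose directly.

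For the ($\Rightarrow$) direction, suppose $F = \exp{\frak{a}} \cdot e$ is an $F$-manifold. By definition this means the multiplication $\circ$ on $\cT_F$ admits a compatible flat structure satisfying \eqref{E:C}. Running the argument of Prop.~\ref{P:compatible} in reverse, the existence of such a local potential $\mathscr{C}$ with $X \circ Y = [X,[Y,\mathscr{C}]]$ for flat $X,Y \in \cT_F$ forces the ambient curvature operator $R_0(X,Y)Z = -[[X,Y],Z]$ (Thm IV.4.2 of~\cite{Hel}) to preserve $T_eF = \frak{a}$; equivalently, $F$ is totally geodesic in $\Omega$. Applying part (3) of Theorem~\ref{T:Hel} in the other direction, the totally geodesic property of $F = \exp{\frak{a}} \cdot e$ forces $\frak{a}$ to be a Lie triple system.

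The main obstacle is the reverse direction, specifically the step from compatibility of $\circ$ with a flat structure on $F$ to the total geodesicity of $F$ inside $\Omega$. The cleanest way to argue this is to note that the multiplication $\circ$ on $F$ is the restriction of the ambient pre-Lie product $\nabla_X(Y)$ (Sec.~\ref{S:AffineFlat}), and the compatibility identity~\eqref{E:C} is obstructed precisely by the second fundamental form coupling $\frak{a}$ to its complement in $\ft$. Vanishing of this obstruction is the algebraic translation of $[[\frak{a},\frak{a}],\frak{a}] \subseteq \frak{a}$, closing the loop with Theorem~\ref{T:Hel}.
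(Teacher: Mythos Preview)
Your approach and the paper's diverge substantially. The paper's own argument does not engage with the biconditional at all: it observes that a general Vinberg cone decomposes into irreducible factors, each of which contains a nonempty totally geodesic submanifold by Prop.~\ref{P:Matrix} and Prop.~\ref{P:Lorentz}, and that the Cartesian product of these is again totally geodesic---so the existence of an $F$-manifold locus carries over from the irreducible case to arbitrary (reducible) $\Omega$. In other words, the paper treats the corollary as an existence statement extending Theorem~\ref{T:F-man} across products, not as the ``iff'' characterization the wording suggests. Your proof, by contrast, takes the biconditional at face value.

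Your $(\Leftarrow)$ direction is exactly right and is essentially the composition $\text{Thm.~\ref{T:Hel}(3)} \Rightarrow \text{Thm.~\ref{T:F-man}}$; nothing more is needed there.

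The $(\Rightarrow)$ direction, however, has a genuine gap. You assert that the compatibility identity~\eqref{E:C} on $F$ ``forces the ambient curvature operator $R_0(X,Y)Z = -[[X,Y],Z]$ to preserve $\frak{a}$'', and that the obstruction to~\eqref{E:C} is ``precisely the second fundamental form coupling $\frak{a}$ to its complement''. Neither claim is proved, and neither follows from reversing Prop.~\ref{P:compatible}: that proposition shows both sides of~\eqref{E:Crochet} vanish when $F$ is totally geodesic, which gives no information in the converse direction. The $F$-manifold axiom is an intrinsic condition on $(F,\circ)$, while total geodesicity is extrinsic; to pass from one to the other you would at minimum need to argue that the restricted multiplication $\circ|_{\cT_F}$ actually detects the component of $\nabla_X Y$ normal to $F$, and this is not supplied. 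As written, the reverse implication is asserted rather than demonstrated.
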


\begin{proof}
Any Vinberg cone is a linear combination of irreducible Vinberg cones.
Each of those irreducible cones contains a non-empty totally geodesic submanifold (Prop.\ref{P:Lorentz} and Prop.\ref{P:Matrix}). The Cartesian product of those totally geodesic submanifolds gives a totally geodesic submanifold. So, our statement on the existence of a totally geodesic submanifold in a Vinberg cone holds given {\it any} Vinberg cone defined on any algebra (for example a Clifford algebra) combining the real division algebras.  

\end{proof}

\begin{lem}
Let $F=\exp{\frak{a}}$, where $\frak{a}$ is Lie triple system. Then,  $(F,\circ,\nabla_0)$ is an $F$-manifold with compatible flat structure iff $\nabla_\lambda^\mathcal{A}$ is a pencil of torsionless flat connections.    
\end{lem}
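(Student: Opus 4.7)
The plan is to reduce the statement to Proposition~\ref{P:F} (Manin's Prop.~2.3.1, which gives precisely this equivalence on an abstract manifold endowed with a flat structure and an odd section $\cA \in \Omega^1_\cM \otimes_{\cO_\cM} End(\cT_\cM)$). So the work is to verify that the hypotheses of that proposition are met for $F = \exp{\frak{a}} \cdot e$, and then to invoke it.

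First I would recall that by Thm.~\ref{T:F-man}, since $\frak{a} \subset \ft$ is a Lie triple system the submanifold $F$ is totally geodesic in $\Omega$, and $(F, \circ)$ is an $F$-manifold with a flat structure compatible with $\circ$. In particular, restricting the connection algebra $\mathscr{A}^+$ from Def.-Prop.~\ref{P:FA} to $TF$ yields a commutative, associative, unital multiplication on the tangent sheaf $\cT_F$ (as established in Prop.~\ref{P:F-alg} and the Lorentzian analogue), and the ambient affine flat structure on $\Omega$ (Cor.~\ref{L:affst}) restricts to a flat structure $\nabla_0$ on $F$ because of total geodesicity.

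Next I would construct the section $\cA$ explicitly from $\circ$: for any $X \in \cT_F$, define $\cA(X) \in End(\cT_F)$ by $\cA(X)(Y) := X \circ Y$. Equivalently, in flat local coordinates $(x_a)$ one sets $\cA = \sum_{a,b,c} C^c_{ab}\, dx^a \otimes (dx^b \otimes \partial_c)$, with $C^c_{ab}$ the structure constants of $\circ$. By construction $X \circ Y = i_X(\cA)(Y)$, which is the datum required in Sec.~\ref{S:flatnesscondition}. The pencil $\nabla_\lambda^\cA := \nabla_0 + \lambda \cA$ is then well defined.

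With the flat connection $\nabla_0$, the bilinear composition $\circ$, and the one-form $\cA$ all in place on $F$, Prop.~\ref{P:F} applies verbatim and yields the equivalence: $(F, \circ, \nabla_0)$ is an $F$-manifold with compatible flat structure if and only if $\nabla_\lambda^\cA$ is a pencil of torsionless flat connections. The forward direction is in fact already guaranteed by Thm.~\ref{T:F-man}, so the content of the lemma for our concrete $F$ is essentially the converse characterisation via the pencil. The main subtlety I expect is bookkeeping rather than mathematical: making sure that the restriction of $\nabla_0$ from the ambient Vinberg cone to $F$ remains torsionless and flat, which is exactly what total geodesicity of $F = \exp{\frak{a}} \cdot e$ supplies, and that the section $\cA$ constructed from the Christoffel symbols $\Gamma^i_{jk} = \tfrac{1}{2}\partial_{jkl}\Phi\, g^{li}$ restricts coherently to $F$.
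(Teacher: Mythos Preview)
Your proposal is correct and follows essentially the same approach as the paper: the paper's proof simply reads ``This follows from Thm.~\ref{T:F-man} and Prop.~\ref{P:F},'' which is exactly the reduction you carry out. Your additional verification that the hypotheses of Prop.~\ref{P:F} are met on $F$ (explicitly constructing $\cA$ from $\circ$ and noting that total geodesicity guarantees the restricted $\nabla_0$ remains flat and torsionless) is more detailed than the paper's two-line argument but does not depart from it.
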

\begin{proof}
    This follows from Thm.~\ref{T:F-man} and Prop.\ref{P:F}.
\end{proof} 

\subsection{New sources of Frobenius manifolds}

\begin{prop}\label{P:pre-Fro}
Vinberg cones are equipped with the structure of a potential pre-Frobenius manifold. 
\end{prop}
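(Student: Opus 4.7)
The strategy is to assemble the data of a pre-Frobenius manifold on a Vinberg cone $\Omega$ by collecting the structures that have already been built piece by piece earlier in the paper, and then to verify the single gluing identity \eqref{E:!} that upgrades these pieces to a \emph{potential} pre-Frobenius structure. The candidate data are:
\begin{itemize}
\item[---] the manifold $\Omega$ itself;
\item[---] the flat structure $\cT_\Omega^f\subset \cT_\Omega$ given by Cor.~\ref{L:affst}, coming from the pre-Lie/Vinberg algebra structure and realised by the affine coordinates $(x_1,\dots,x_n)$ of Sec.~\ref{S:AffineFlat};
\item[---] the non-degenerate symmetric quadratic form $g$ of Lem.~\ref{l:Hess}, obtained as the Hessian of the potential $\Phi=\ln\chi$, where $\chi$ is the Koszul--Vinberg characteristic function of Def.~\ref{D:KV};
\item[---] the $\cO_\Omega$-bilinear commutative multiplication $\circ$ on $\cT_\Omega$ given by the connection algebra $\mathscr{A}^+$ of Def.-Prop.~\ref{P:FA}, whose structure constants are $\Gamma^{i}_{jk}=\tfrac{1}{2}\partial_{jkl}\Phi\, g^{li}$.
\end{itemize}
The first thing to check is that this is a bona fide pre-Frobenius datum as in \eqref{E:1}, i.e.\ that $g$ is non-degenerate (it is, by Lem.~\ref{l:Hess}, since the Hessian of $\ln\chi$ is positive-definite) and that $\circ$ is commutative (which follows from the symmetry of $\partial_{jkl}\Phi$, noted in the remark after Def.-Prop.~\ref{P:FA}).

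Next I would verify the potentiality identity \eqref{E:!}. Working in an affine coordinate system $(x_a)$ provided by the flat structure, I take flat basis fields $X=\partial_a$, $Y=\partial_b$, $Z=\partial_c$ and compute
\begin{equation*}
g(\partial_a\circ\partial_b,\partial_c)
= g_{ci}\,(\partial_a\circ\partial_b)^{i}
= g_{ci}\,\Gamma^{i}_{ab}
= \tfrac{1}{2}\,\partial_{a}\partial_{b}\partial_{c}\Phi,
\end{equation*}
up to the global normalisation fixed by the convention of Def.-Prop.~\ref{P:FA}. Both of the other expressions in \eqref{E:!} are then equal to the same third derivative, by the total symmetry of $C(\partial_a,\partial_b,\partial_c)=\partial_{abc}\Phi$ in its three arguments. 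In other words, the trilinear form $C$ is automatically the third covariant derivative of a single local potential $\Phi=\ln\chi$, which is exactly the definition of potentiality. The symmetry $g(X\circ Y,Z)=g(X,Y\circ Z)$ that also appears in \eqref{E:!} is nothing but Lem.~\ref{L:Ass} applied to the connection algebra $\mathscr{A}^+$.

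Finally, I would make sure that the globalisation is legitimate: $\chi$ is a globally defined real-analytic function on $\Omega$ (Def.~\ref{D:KV}), $\Phi=\ln\chi$ is smooth and strictly convex (H\"older, as recalled after Def.~\ref{D:KV}), and the affine coordinates together with the $G(\Omega)$-invariant Hessian metric glue consistently across $\Omega$. The main conceptual obstacle is not any of the individual ingredients, which are already in place by the earlier sections, but the verification that the multiplication $\circ$ dictated by the Riemannian Levi-Civita data on the one hand, and the multiplication implicit in the Koszul--Vinberg potential $\Phi=\ln\chi$ on the other hand, really coincide; this is what allows $\Phi$ to serve simultaneously as the Hessian potential of $g$ and as the Frobenius potential of $(g,\circ)$. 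Once this identification is made, the proposition follows at once from Def.-Prop.~\ref{P:FA} and the definition of a potential pre-Frobenius manifold in Sec.~\ref{S:FrobeniusmfdDATA}. Note that associativity of $\circ$ is \emph{not} claimed at this stage; that additional property, needed to upgrade pre-Frobenius to Frobenius, will be established only after restricting to the totally geodesic flats $F=\exp\frak{a}\cdot e$ of Thm.~\ref{T:F-man}.
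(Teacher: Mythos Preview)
Your proposal is correct and follows essentially the same approach as the paper: assemble the affine flat structure (Cor.~\ref{L:affst}), the Hessian metric $g=\mathrm{Hess}(\ln\chi)$ (Lem.~\ref{l:Hess}), the rank-three tensor $C_{ijk}=\partial_{ijk}\Phi$, and the multiplication $\circ$ from Def.-Prop.~\ref{P:FA}, then invoke Lem.~\ref{L:Ass} for the compatibility $g(X\circ Y,Z)=g(X,Y\circ Z)=C(X,Y,Z)$ and observe that $\Phi=\ln\chi$ serves as the local potential. Your write-up is in fact more explicit than the paper's own argument, since you carry out the coordinate verification of \eqref{E:!} and remark on globalisation and on the deferral of associativity, whereas the paper simply lists the ingredients and cites the relevant lemmas.
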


\begin{proof}
Let $\Omega$ be a Vinberg cone. 

\begin{enumerate}
    \item By Lem.~\ref{L:affst}, $\Omega$ has an affine flat structure. Given that it is a Hessian manifold the flat connection is a Levi--Civita connection $\nabla_0$. 
    \item Vinberg cones comes equipped with the data $(\Omega,g,C,\circ)$, where:
\end{enumerate}

\begin{itemize}
    \item $g$ is a compatible Riemannian (Hessian) metric. Locally we have: $g_{ij}=\partial_i\partial_j\Phi$, where $\Phi$ is a potential function given by the Koszul--Vinberg function given in Def.\ref{D:KV}.
     \smallskip
    \item $C$ is a rank three symmetric tensor. In the present case given that $\Omega$ is being Hessian , $C$ is given by $C_{ijk}=\partial_i\partial_j\partial_k\Phi$.
    \smallskip
    \item $``\circ"$ is a multiplication operation on the tangent bundle, defined in Prop. \ref{P:F-alg}. The subalgebra associated to the locus of $(n-1)$-flats is a Frobenius algebra, by Prop.\ref{P:FA}. 
\end{itemize}

By Prop.\ref{P:F-alg} and Lem.\ref{L:Ass}, we have $C(X,Y,Z)=g(X\circ Y, Z)=g(X,Y\circ Z)$. 

This gives a pre-Frobenius manifold. One can add that we have a potential pre-Frobenius manifold. Indeed, $C(X,Y,Z)$ admits everywhere locally a potential. 
\end{proof}
\begin{thm}\label{C:VinFrob}
Consider a Vinberg cone $\Omega$. Then, the manifold $F=\exp{\frak{a}}\cdot e$, where $\frak{a}\subset \ft$ is Lie triple system, forms a Frobenius manifold immersed in $\Omega$.
\end{thm}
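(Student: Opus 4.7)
The plan is to show that the data $(F,\circ,\cT_F^f,g|_F)$ inherited from the ambient Vinberg cone $\Omega$ satisfies all the axioms of a Frobenius manifold, by combining the previously established pre-Frobenius structure on $\Omega$ (Prop.~\ref{P:pre-Fro}) with the Frobenius algebra structure on each tangent space of the flat $F$ (Prop.~\ref{P:F-alg}) and the compatible flat $F$-manifold structure (Thm.~\ref{T:F-man}). Recall that by the definition in Sec.~\ref{S:Frobeniusmfd}, a pre-Frobenius manifold is Frobenius precisely when it is simultaneously \emph{potential} and \emph{associative}; the strategy is therefore to verify these two properties for the restricted data on $F=\exp(\frak{a})\cdot e$.

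First I would restrict the four pieces of data on $\Omega$ to the totally geodesic immersion $F\hookrightarrow \Omega$: the tangent sheaf $\cT_F\subset \cT_\Omega|_F$ is identified pointwise with translates of $\frak{a}\subset \ft$; the Hessian metric $g_V=-\mathrm{Hess}(\ln\chi)$ restricts to a non-degenerate symmetric bilinear form on $\cT_F$ because $F$ is spacelike Lagrangian--Grassmannian (or, in the Lorentzian case, because of Prop.~\ref{P:Lorentz}); the flat connection $\nabla_0$ from Cor.~\ref{L:affst} restricts to a torsionless flat connection on $F$ since $F$ is totally geodesic; and the multiplication $\circ$ given by Def.-Prop.~\ref{P:FA} restricts to a multiplication on $\cT_F$ because, by Prop.~\ref{P:F-alg}, at each point of $F$ the tangent space $\frak{a}$ (or $\tilde{\frak{a}}$) is closed under $\circ$ and forms a unital, commutative, associative algebra.

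Next I would verify associativity and potentiality. Associativity is immediate from Prop.~\ref{P:F-alg}: since the tangent algebra at every point of $F$ is a Frobenius algebra, in particular associative, the multiplication $\circ$ on $\cT_F$ is pointwise associative. For potentiality, I would restrict the global potential $\Phi=\ln\chi$ of the pre-Frobenius structure on $\Omega$ to $F$: the trilinear form $C(X,Y,Z)=g(X\circ Y,Z)=(XYZ)\Phi$ is symmetric in its three arguments (as noted after Def.-Prop.~\ref{P:FA}), and the relation $g(X\circ Y,Z)=g(X,Y\circ Z)$ holds on $\cT_F$ thanks to Lem.~\ref{L:Ass} applied to the subalgebra structure of Prop.~\ref{P:F-alg}. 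Hence the triple product of local flat vector fields on $F$ against $\Phi|_F$ reproduces the structure tensor of $\circ$, which is precisely the potentiality condition of Eq.~\eqref{E:!}.

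The main obstacle I anticipate is bookkeeping around the \emph{compatibility of the flat structure with} $\circ$ on $F$, i.e.\ exhibiting the local vector potential $\mathscr{C}$ of Eq.~\eqref{E:C}; but this is essentially already handled by Prop.~\ref{P:compatible} (which gives $X\circ Y=[X,[Y,\mathscr{C}]]$ on $F$ because the sectional curvature vanishes there), so combined with Thm.~\ref{T:F-man} the $F$-manifold axioms are in place. Putting it all together: $(F,\circ,\nabla_0|_F,g|_F)$ is a pre-Frobenius manifold (from the restricted data plus the restricted potential $\Phi|_F$), it is associative (Prop.~\ref{P:F-alg}), and it admits a local potential (restriction of $\ln\chi$). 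By the definition of Sec.~\ref{S:Frobeniusmfd}, $F$ is therefore a Frobenius manifold immersed in $\Omega$, which is the desired conclusion.
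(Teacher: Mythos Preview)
Your proposal is correct and follows essentially the same route as the paper: restrict the pre-Frobenius data of Prop.~\ref{P:pre-Fro} to the totally geodesic flat $F$, then verify associativity via the Frobenius algebra structure on each tangent space (Prop.~\ref{P:F-alg}) and potentiality via the restricted Koszul--Vinberg potential $\Phi=\ln\chi$. The paper additionally remarks that one can conclude via Th.~1.5 of \cite{Man99} (flatness of the pencil $\nabla_\lambda^{\cA}$ on the totally geodesic $F$), but this is offered as an alternative rather than a different argument.
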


\begin{proof}
Let $F$ be maximal flat in $\Omega$; we think of $F$ as embedded in $\Omega$ for the following paragraphs. 

\smallskip 

1) By Prop.~\ref{P:pre-Fro} the Vinberg cone is pre-Frobenius. It is equipped with: 

\begin{itemize}
    \item $\nabla_0$, an affine flat structure (cf. Lem.~\ref{L:affst});
    \item $g$, a compatible Riemannian metric; 
    \item $C$, a rank three symmetric tensor; 
      \item $``\circ"$, a multiplication operation on the tangent bundle (see Prop. \ref{P:FA}.)
\end{itemize}

Moreover, one has a compatibility condition: $g(X\circ Y,Z)=g(X,Y\circ Z)=C(X,Y,Z)$, where $X, Y,Z\in \cT_\Omega$ is satisfied.

Let us now discuss the proper embedding $i:F\hookrightarrow V$. $F$ is isometric to an Eucldean space.

\smallskip 

2) By Lem.~\ref{L:affst}, $\Omega$ has an affine flat structure. The embedding $i:F \to V$ induces a Levi--Civita connection on $F$. It is the Levi--Civita connection for the pullback metric $g^F:=i^*g.$

\smallskip 

3) One verifies that the potentiality axiom holds on $F$. Indeed, as for $\Omega$, the rank 3 symmetric tensor $C$ admits everywhere locally a potential function. The local potential $\Phi(=\ln\chi)$ is such that for any flat local tangent fields  one has: $\partial_{abc}\Phi=C_{abc}$. 

\smallskip 

4)  By~\ref{P:F-alg} the tangent space to $F$ forms a Frobenius algebra.
 This is a strong statement since it proves that the associativity axiom for $\circ$ i.e $(X\circ Y)\circ Z=X\circ (Y\circ Z)$ for $X,Y,Z\in \cT_V$.

 At this stage of the proof all axioms to have a Forbenius manifold are satisfied.  So, a maximal flat i.e a totally geodesic submanifold of maximal dimension, in a Vinberg cone froms a Frobenius manifold.  

A different argument can be added. 
In virtue of Th. 1.5 in~\cite{Man99} a pre-Frobenius manifold M is Forbenius iff 
the pencil of connections $\nabla_{\lambda,X}(Y)=\nabla_{0,X}(Y)+\lambda (X\circ Y)$ for $X,Y\in \cT_V$ and $\lambda\in \R$ is flat.
The maximal flat is totally geodesic. So, the pencil of connections $\nabla_{\lambda}^\cA$ is flat. This allows a direct conclusion. 
\end{proof}

\begin{lem}
 Every Frobenius submanifold $F$ of $\Omega$ is
necessarily a $G$-translate of $F$, where $G$ acts by isometries on $\Omega$.
   \end{lem}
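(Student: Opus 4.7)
The plan is to reduce the classification of Frobenius submanifolds in $\Omega$ to the classification of maximal abelian subspaces of $\frak{t}$, and then to invoke the classical Cartan conjugacy theorem for symmetric spaces of noncompact type. The overall strategy has three steps: (i) show that any Frobenius submanifold is totally geodesic and flat; (ii) such flats correspond to abelian subspaces of $\frak{t}$ up to a base-point shift; (iii) all maximal abelian subspaces of $\frak{t}$ are $K$-conjugate.

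First, using the transitivity of $G$ on $\Omega$, I would translate an arbitrary Frobenius submanifold $F'$ so that it passes through the base point $e$. By Theorem~\ref{T:F-man} and Corollary~\ref{C:totallygeodesic}, such a submanifold has the form $F' = \exp(\frak{a}') \cdot e$ where $\frak{a}' \subset \frak{t}$ is a Lie triple system. The Frobenius axioms --- potentiality together with the pencil-of-flat-connections condition of Proposition~\ref{P:F} --- combined with the curvature formula $R(X,Y)Z = -[[X,Y],Z]$ from Theorem~\ref{T:Hel}, force the ambient curvature to vanish along $F'$, which strengthens the condition \emph{Lie triple system} to \emph{abelian subspace}. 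Hence $\frak{a}'$ is an abelian subspace of $\frak{t}$.

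Second, I would invoke the Cartan conjugacy theorem for symmetric spaces of noncompact type (see \cite{Hel}, Ch.~V): any two maximal abelian subspaces of $\frak{t}$ are conjugate under the adjoint action of the maximal compact subgroup $K$. Embedding $\frak{a}'$ into a maximal abelian $\tilde{\frak{a}}$, and letting $\frak{a}$ be the reference Cartan subalgebra from Proposition~\ref{P:Matrix}, there exists $k \in K \subset G(\Omega)$ with $\mathrm{Ad}(k)\tilde{\frak{a}} = \frak{a}$. Since $K$ acts on $\Omega$ by isometries preserving the Koszul--Vinberg metric, the translate $k \cdot F'$ embeds into the reference $F$, yielding $F' = k^{-1} \cdot F$ in the maximal-dimensional case.

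The main obstacle is the first reduction step: verifying that the Frobenius axioms genuinely force the ambient sectional curvature to vanish along $F'$, not merely that $T_eF'$ be a Lie triple system. This should be extractable by running the argument of Proposition~\ref{P:compatible} in reverse --- the identity $X \circ Y = [X,[Y,\mathscr{C}]]$ combined with the explicit structure constants $\Gamma^i_{jk} = \tfrac{1}{2}\partial_{jkl}\Phi\, g^{li}$ tightly constrains the Christoffel data and hence the curvature restricted to $F'$. Minor secondary issues, namely handling base-point shifts and non-maximal Frobenius submanifolds, are resolved by $G$-homogeneity and by passing to the maximal abelian enclosure $\tilde{\frak{a}} \supseteq \frak{a}'$, respectively.
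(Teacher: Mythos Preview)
The paper supplies no proof of this lemma --- it is stated without argument immediately before the Conclusion --- so there is nothing to compare your proposal against. The statement itself is also defective as printed (it literally asserts that $F$ is a $G$-translate of $F$, which is vacuous); your reading, that any Frobenius submanifold of $\Omega$ is a $G$-translate of the reference maximal flat from Theorem~\ref{C:VinFrob}, is the sensible one, and your appeal to Cartan's conjugacy theorem for maximal abelian subspaces of $\frak{t}$ is the natural tool for that reading.

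That said, your argument has two genuine gaps. First, you cite Theorem~\ref{T:F-man} and Corollary~\ref{C:totallygeodesic} to deduce that an arbitrary Frobenius submanifold through $e$ has the form $\exp(\frak{a}')\cdot e$ with $\frak{a}'$ a Lie triple system; but those results run in the opposite direction (Lie triple system $\Rightarrow$ $F$-manifold structure), and nothing in the paper establishes that every Frobenius submanifold of $\Omega$ is totally geodesic. You flag the abelian-versus-Lie-triple issue as ``the main obstacle,'' but the prior step --- getting total geodesy at all --- is already unjustified. Second, your treatment of the non-maximal case via ``passing to the maximal abelian enclosure $\tilde{\frak{a}}\supseteq\frak{a}'$'' does not work: Cartan conjugacy carries one maximal abelian subspace onto another, but the element $k\in K$ effecting this need not send the given sub-subspace $\frak{a}'$ onto a prescribed target, and in general there are many $K$-orbits of non-maximal abelian subspaces of a fixed dimension in $\frak{t}$. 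If the lemma is only intended to say that any two \emph{maximal} flats are $G$-related, then Cartan conjugacy plus the transitivity of $G$ on $\Omega$ already finishes it, and steps (i) and (ii) are superfluous.
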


\section{Conclusion and future work}
We have thus proved in this paper an interesting relation between the WDVV PDE and strictly convex symmetric cones, leading to intriguing questions relating the 2D (TFT) \cite{Du96} and those cones.
This is achieved using geometric and algebraic methods (using Cartan's classification of symmetric spaces) on the one hand side; and Calabi's investigation on the Monge--Amp\`ere equation. This allows for instance to generalise a result of \cite{FKN, Mokh95} on symplectic Monge--Ampere equations of Hirota type. It as well allows an expression of this idea in the realm of geometry and algebra. 

Applications of this result can be achieved for instance in information geometry \cite{CoMa,Ch82}. This is done by using the fact that 
to any point $x\in \Omega$ one may attach a probability measure on the dual cone. This forms a main object in information geometry and statistics, since the {\it KV characteristic function} corresponds to the density of a probability measure:\[\forall x \mapsto p_x(x^*)=\frac{\exp(-\langle x^*,x\rangle)}{\chi(x)}.\]
 
To conclude, results of this paper reveal an enlargement of the classification of sources of Frobenius manifolds. In particular, it is shown that there exist Frobenius manifolds defined over linear combinations of real division algebras of finite dimension and as well that there exist Frobenius manifolds in the context of pseudo-Riemannian manifolds: the Lorentz manifolds of Anti-De-Sitter type.

\end{document}